\documentclass{amsart}
\usepackage[utf8]{inputenc}
\usepackage[english]{babel}
\usepackage{amsmath}
\usepackage{amsfonts}
\usepackage{amssymb}
\usepackage{graphicx}
\usepackage[usenames, dvipsnames]{color}
\usepackage[left=1.3cm,right=1.3cm,top=3cm,bottom=3cm]{geometry}
\usepackage{stackrel}
\usepackage{float}

\usepackage{hyperref}

\usepackage[all,cmtip]{xy}
\usepackage{extarrows}

\usepackage{longtable}

\usepackage{amsthm} 
\usepackage{mathtools}
\usepackage{comment}
\usepackage{multicol}

\usepackage{tikz}
\newcommand*\circled[1]{\tikz[baseline=(char.base)]{
            \node[shape=circle,draw,inner sep=1pt] (char) {#1};}}

\counterwithin{equation}{section}

\newtheorem{theorem}{Theorem}[section]
\newtheorem{definition}[theorem]{Definition}
\newtheorem{proposition}[theorem]{Proposition}
\newtheorem{lemma}[theorem]{Lemma}
\newtheorem{corollary}[theorem]{Corollary}

\theoremstyle{remark}
\newtheorem{remark}[theorem]{Remark}
\newtheorem{example}[theorem]{Example}
\newtheorem{notation}[theorem]{Notation}

\newcommand{\AAA}{\mathbb{A}}
\newcommand{\BB}{\mathbb{B}}
\newcommand{\CC}{\mathbb{C}}
\newcommand{\PP}{\mathbb{P}}

\newcommand{\QQ}{\mathbb{Q}}
\newcommand{\ZZ}{\mathbb{Z}}

\newcommand{\B}{\mathrm{B}}

\newcommand{\cA}{\mathcal{A}}
\newcommand{\cB}{\mathcal{B}}
\newcommand{\cC}{\mathcal{C}}
\newcommand{\cD}{\mathcal{D}}

\newcommand{\cF}{\mathcal{F}}
\newcommand{\cG}{\mathcal{G}}
\newcommand{\cL}{\mathcal{L}}
\newcommand{\cO}{\mathcal{O}}
\newcommand{\cQ}{\mathcal{Q}}
\newcommand{\cX}{\mathcal{X}}
\newcommand{\cU}{\mathcal{U}}
\newcommand{\cZ}{\mathcal{Z}}

\newcommand{\cfa}{\mathfrak{a}}
\newcommand{\cfb}{\mathfrak{b}}
\newcommand{\cfc}{\mathfrak{c}}
\newcommand{\cfh}{\mathfrak{h}}

\newcommand{\rat}{\mathrm{rat}}
\newcommand{\alg}{\mathrm{alg}}
\newcommand{\num}{\mathrm{num}}
\newcommand{\nil}{\mathrm{nil}}

\DeclareMathOperator{\nc}{nc}

\newcommand{\dg}{\mathrm{dg}}

\DeclareMathOperator{\obj}{obj}
\DeclareMathOperator{\Hom}{Hom}

\DeclareMathOperator{\colim}{colim}
\DeclareMathOperator{\dgcat}{dgcat}
\DeclareMathOperator{\perf}{perf}

\DeclareMathOperator{\perfdg}{perf_{\dg}}
\DeclareMathOperator{\Spec}{Spec}
\DeclareMathOperator{\NChow}{NChow}
\DeclareMathOperator{\Chow}{Chow}
\DeclareMathOperator{\ch}{ch}
\DeclareMathOperator{\td}{td}
\DeclareMathOperator{\Gr}{Gr}
\DeclareMathOperator{\Pf}{Pf}
\DeclareMathOperator{\Vect}{Vect}

\allowdisplaybreaks

\begin{document}
\title[A refinement of some previous results of Bernardara-Marcolli-Tabuada and Ornaghi-Pertusi]{A refinement of some previous results of \\Bernardara-Marcolli-Tabuada and Ornaghi-Pertusi}
\author{José Francisco Reis}
\address{Center for Mathematics and Applications (NovaMath), FCT NOVA and Department of Mathematics, FCT NOVA}
\email{jfd.reis@campus.fct.unl.pt}
\thanks{}
\maketitle
\begin{abstract}
The Voevodsky nilpotence conjecture was proved by Bernardara-Marcolli-Tabuada for certain quadric fibrations, intersections of quadrics, linear sections of Grassmannians, linear sections of determinantal varieties, and Moishezon manifolds, and later by Ornaghi-Pertusi for certain cubic fourfolds and Gushel-Mukai fourfolds. In this paper we refine these results by using the algebraic equivalence relation instead of the nilpotence equivalence relation. Along the way, we address also certain cases of K{\"u}chle fourfolds, families of Sextic del Pezzo surfaces and families of Fano fourfolds of K3 type.
\end{abstract}
\pagenumbering{arabic}

\,

\section{Introduction}

Let $k$ be a base field. Given a smooth proper $k$-scheme $X$, let us denote by $\cZ^*(X)_\QQ$ the (graded) $\QQ$-vector space of algebraic cycles on $X$. It is well-known that one can define many different equivalence relations on $\cZ^*(X)_\QQ$, such as the rational equivalence relation $\sim_\rat$, the algebraic equivalence relation $\sim_\alg$, the nilpotence equivalence relation $\sim_\nil$, the numerical equivalence relation $\sim_\num$, etc; consult \cite{Fulton2nd_ed}.
Voevodsky conjectured in \cite{Voevodsky95} that the nilpotence and numerical equivalence relations agree. This conjecture was proved\footnote{Thanks to the work of Kahn-Sebastian, Matsusaka, Voevodsky, and Voisin (see \cite{KS09,Mat57,Voevodsky95, Voisin96}), Voevodsky's conjecture is known in the case of curves, surfaces, and abelian 3-folds (when $k$ is of characteristic zero).} by Bernardara-Marcolli-Tabuada \cite{BMT} for certain quadric fibrations, intersections of quadrics, linear sections of Grassmannians, linear sections of determinantal varieties, and Moishezon manifolds, and later by Ornaghi-Pertusi \cite{OP} for certain cubic fourfolds and Gushel-Mukai fourfolds.
As shown by Voevodsky in \cite{Voevodsky95}, every algebraic cycle which is algebraically trivial is also nilpotently trivial. Consequently, it is natural to ask if the aforementioned results also hold when the nilpotence equivalence relation is replaced by the algebraic equivalence relation? Our main result is an affirmative answer to this question:

\begin{theorem}[Refinement]
\label{theorem: main theorem}
The algebraic and numerical equivalence relations agree for certain quadric fibrations, intersections of quadrics, linear sections of Grassmannians, linear sections of determinantal varieties, Moishezon manifolds, cubic fourfolds and Gushel-Mukai fourfolds. In addition, they also agree for certain K{\"u}chle fourfolds, families of Sextic del Pezzo surfaces and families of Fano fourfolds of K3 type.\label{main theo}
\end{theorem}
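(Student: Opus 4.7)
The plan is to follow the strategy of Bernardara-Marcolli-Tabuada \cite{BMT} and Ornaghi-Pertusi \cite{OP}, replacing the nilpotence equivalence relation throughout by the algebraic equivalence relation. The first step is to introduce a noncommutative analogue $\Cncalgnum$ of the property $\Calgnum$ for smooth proper dg categories, and to verify the bridge statement that a smooth proper $k$-scheme $X$ satisfies $\Calgnum$ if and only if $\perfdg(X)$ satisfies $\Cncalgnum$. This compatibility should follow, as in Kontsevich's formalism of noncommutative motives, from the known bridge between (noncommutative) Chow motives and (noncommutative) numerical motives, once one checks that algebraic triviality survives the passage between commutative and noncommutative worlds.

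The second step is the key stability statement: if $\perfdg(X)=\langle \mathcal{T}_1,\ldots,\mathcal{T}_n\rangle$ is a semi-orthogonal decomposition by admissible dg subcategories and every $\mathcal{T}_i$ satisfies $\Cncalgnum$, then so does $\perfdg(X)$. The argument should proceed by induction on $n$ and exploit the additive behaviour of $K$-theory and periodic cyclic homology under semi-orthogonal decompositions, paralleling what \cite{BMT} achieve for the nilpotence case.

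The third step is then to run this machinery on each family listed in the theorem. For quadric fibrations, intersections of quadrics, and linear sections of Grassmannians or determinantal varieties one applies Kuznetsov's semi-orthogonal decompositions, reducing the components to derived categories of curves and surfaces, for which $\Calgnum$ is classically known by Matsusaka \cite{Mat57} and Kahn-Sebastian \cite{KS09}. Moishezon manifolds are handled as in \cite{BMT} via a further reduction to low-dimensional pieces. For cubic fourfolds and Gushel-Mukai fourfolds the relevant semi-orthogonal decompositions isolate a Kuznetsov component of K3 type, whose associated noncommutative motive again reduces (after the Ornaghi-Pertusi analysis) to geometric pieces of dimension $\le 2$. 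For K{\"u}chle fourfolds, families of sextic del Pezzo surfaces, and families of Fano fourfolds of K3 type, one uses the corresponding Kuznetsov-Perry style decompositions to achieve the same kind of reduction.

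The main obstacle I anticipate is the stability of $\Cncalgnum$ under semi-orthogonal decompositions. The nilpotence proof exploits that nilpotent cycles form a tensor $\otimes$-ideal which is automatically preserved under additive decompositions; algebraic equivalence, defined geometrically via families of cycles parametrised by smooth connected curves, does not enjoy this formal property at first sight. Its noncommutative reformulation must therefore be chosen carefully---for instance by describing algebraically trivial classes as images of correspondences coming from smooth curves, an intrinsically additive description---so that the additivity of $K_0$ and of (periodic) cyclic homology along a semi-orthogonal decomposition transfers cleanly into the additivity of the algebraically trivial subgroup, and hence into the stability of $\Cncalgnum$. Once this compatibility is in place, the remainder of the argument is essentially a translation of \cite{BMT} and \cite{OP} with $\sim_\nil$ replaced by $\sim_\alg$.
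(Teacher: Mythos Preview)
Your proposal is essentially the paper's approach: define a noncommutative algebraic equivalence relation on $K_0(\cA)_\QQ$, establish the bridge to the classical relation (the paper's Theorem~\ref{theo: non commutative to classical rel} and Corollary~\ref{corollary: nc and c are =}), prove stability under semi-orthogonal decompositions (Lemma~\ref{lemma: K algebrico 2nd prop additive} and Proposition~\ref{proposition: kernel nc factors}), and then run through each family via the known Kuznetsov-type decompositions exactly as in \cite{BMT} and \cite{OP}. One minor correction: the paper never invokes periodic cyclic homology; the additivity of the algebraically trivial subgroup is obtained purely at the level of $K_0$ by observing that the upper-triangular gluing satisfies $T(\cfb^{\dg},\cfc^{\dg};\B)\otimes\perfdg(T)\simeq T(\cfb^{\dg}\otimes\perfdg(T),\cfc^{\dg}\otimes\perfdg(T);\B\otimes\perfdg(T))$ (Lemma~\ref{Lemma Triangular-system}), so that the defining homomorphism~(\ref{alg rel}) itself splits along the decomposition.
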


Theorem \ref{theorem: main theorem} follows from the combination of Corollaries \ref{corollary: certain quadrics}, \ref{corollary: certain intersections}, \ref{corollary: sextic del Pezzo}, \ref{corollary: grassmannians}, \ref{corollary: determinantal} with Theorems \ref{theorem: certain moishezon}, \ref{theorem (A)}, \ref{theorem (C)}, \ref{theorem (D)}, \ref{theorem: certain kuchel fourfolds} and \ref{theorem: fano fourfolds of K3 type}; consult \S \ref{subsection: severi-brauer}-\S\ref{subsection: HPD} and \S\ref{subsection: fano K3 type} below.

\begin{remark}
\label{remark: not always true}
Theorem \ref{main theo} does not hold for every smooth proper $k$-scheme! For example, it follows from the pioneering work of Griffiths \cite{Grif69} that if $X$ is a general quintic $3$-fold, then the algebraic and numerical equivalence relations on $\cZ^*(X)_\QQ$ do not agree; consult also the later works of Clements \cite{Clem83} and Voisin \cite{Voisin00} for further examples.
\end{remark}

In addition to Theorem \ref{main theo}, we also prove that the quotient between any two equivalence relations is invariant under Homological Projective Duality in the sense of Kuznetsov \cite{Kuzn07}; consult Theorem \ref{Theorem: HPD} below. This allows us to explicitly compute the quotient between the rational and the algebraic equivalence relations for certain 5-folds; consult Corollaries \ref{corollary: Grassmannian curve genus 1}-\ref{corollary: Grassmannian curve genus 43} below.

\section{Preliminaries}

Throughout this paper, $k$ denotes a base field.

\subsection{Dg categories}
\label{subsection: dg}
We recall here some basic concepts on differential graded categories; for a survey, consult \cite{Keller06}. 

Let $(C(k),\otimes, k )$ be the category of (cochain) complexes of $k$-vector spaces. A \textit{differential graded (dg) category} $\cA$ is a category enriched over $C(k)$ and a dg functor $F: \cA \rightarrow \cB$ is a functor enriched over $C(k)$. Note that every $k$-algebra $A$ gives rise to a dg category with a single object.
Recall also that, given any quasi-compact quasi-separated $k$-scheme $X$, the category of perfect complexes $\perf (X)$ admits a canonical\footnote{When $X$ is quasi-projective, this dg enhancement is moreover unique, see \cite[Theorem 2.12]{LunOrl}} dg enhancement $\perfdg (X)$; see \cite[\S 4.6]{Keller06}.

Let $\cA$ be a dg category. The opposite dg category $\cA^{\mathrm{op}}$ has the same objects of $\cA$ and $\cA^{\mathrm{op}}(x,y):=\cA(y,x)$ and the category $\mathrm{H}^0(\cA)$ has the same objects of $\cA$ and $(\mathrm{H}^0(A))(x,y)=\mathrm{H}^0(\cA(x,y))$; consult \cite[\S 2.2]{Keller06}.
A \textit{right dg $\cA$-module} is a dg functor $M: \cA^{\mathrm{op}} \rightarrow \cC_{\dg}(k)$ with values in the dg category of complexes of $k$-vector spaces.
The category of dg modules $\cC(\cA)$ has the right dg $\cA$-modules as objects and the morphisms of dg functors as morphisms. The localization of $\cC(\cA)$ with respect to the class of quasi-isomorphism is called the \emph{derived category} $\cD(\cA)$ of $\cA$. We will write $\cD_c (\cA)$ for the full subcategory of compact objects of $\cD(\cA)$; consult \cite[\S 3.2]{Keller06}.

A dg functor $F: \cA \rightarrow \cB$ is a \textit{Morita equivalence} if it induces an equivalence on the derived categories $\cD(\cA) \simeq \cD(\cB)$; see \cite[\S 4.6]{Keller06}.
Given dg categories $\cA$ and $\cB$, its \textit{tensor product} $\cA \otimes \cB$ is the dg category whose set of objects is $\obj(\cA) \times \obj(\cB)$ and $(\cA \otimes \cB)((x,w),(y,z)):= \cA(x,y) \otimes \cB(w,z)$. Following \cite[\S 2.3]{Keller06}, this construction gives a symmetric monoidal structure on the category $\dgcat(k)$ of (essentially small) dg categories and dg functors over the base field $k$.
A \emph{dg $\cA$-$\cB$-bimodule} is a dg functor $\B: \cA \otimes \cB^{\mathrm{op}} \rightarrow \cC_{\dg}(k)$, i.e., a right dg $\cA^{\mathrm{op}} \otimes \cB$-module. Given a dg functor $F: \cA \rightarrow \cB$ there exists an induced dg $\cA$-$\cB$-bimodule associated to $F$ defined as $\prescript{}{F}{\cB} : \cA \otimes \cB^{\mathrm{op}} \longrightarrow \cC_{\dg}(k),$ $(x,z) \longmapsto \cB(z,F(x))$.

Following Kontsevich \cite{kont05Video, Kont09Chp,kont10Notes}, a dg category $\cA$ is called \textit{smooth} if the dg $\cA$-$\cA$-bimodule $\prescript{}{id}{\cA}$ belongs to the subcategory $\cD_c(\cA^{\mathrm{op}} \otimes \cA)$ and it is called \textit{proper} if $\sum_i \dim \mathrm{H}^i\cA(x,y)<\infty$ for every pair of objects $x,y \in \cA$.
The dg category of the perfect complexes $\perfdg (X)$ associated to a smooth proper $k$-scheme $X$ is an example of a smooth proper dg category; see \cite[Example 1.42(ii)]{Tab15a}. We shall write $\dgcat_{\mathrm{sp}}(k) \subseteq \dgcat(k)$ for the full subcategory of smooth proper dg categories.

Given dg categories $\cA$ and $\cB$ and a $\cB$-$\cA$-bimolude $\B$, consider the dg category $T(\cA, \cB; \B)$ whose set of objects is the disjoint union of the sets of objects of $\cA$ and $\cB$, and whose complexes of morphisms are defined as follows:
\begin{align*}
T(\cA, \cB; \B) (x,y):=\left\lbrace\begin{matrix}
    \cA(x,y) & \text{if} & x,y \in \cA\\
    \cB(x,y) & \text{if} & x,y \in \cB\\
    \B(y,x) & \text{if} & x \in \cA, y \in \cB\\
    0 & \text{if} & y \in \cA, x \in \cB.\\
    \end{matrix}\right. 
\end{align*}

By consctruction, we have two canonical inclusions $i_\cA : \cA \rightarrow T(\cA, \cB; \B)$ and $i_\cB : \cB \rightarrow T(\cA, \cB; \B)$.
\begin{lemma}
\label{Lemma Triangular-system}
Let $\cA$ and $\cB$ be two dg categories and $\B$ a dg $\cB$-$\cA$-bimolude. Given a dg category $\cC$ we have a canonical identification between the dg categories $T(\cA, \cB; \B) \otimes \cC$ and $T(\cA \otimes \cC, \cB \otimes \cC; \B \otimes \cC)$.
\end{lemma}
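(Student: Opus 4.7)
The plan is to exhibit an explicit dg functor $F : T(\cA,\cB;\B) \otimes \cC \longrightarrow T(\cA\otimes\cC, \cB\otimes\cC; \B\otimes\cC)$ that is the identity on objects and on morphism complexes, after identifying both sides through the definitions. Everything is bookkeeping: the construction $T(-,-;-)$ only changes the object set to a disjoint union and stitches together the hom complexes by a ``block matrix'' whose off-diagonal blocks are $\B$ and $0$; tensoring with $\cC$ distributes over the disjoint union of object sets and over each entry of the block.

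More concretely, I would proceed as follows. First, identify the object sets: an object of $T(\cA,\cB;\B)\otimes\cC$ is a pair $(u,c)$ with $u \in \obj(\cA)\sqcup \obj(\cB)$ and $c \in \obj(\cC)$, while an object of $T(\cA\otimes\cC,\cB\otimes\cC;\B\otimes\cC)$ is an element of $\obj(\cA\otimes\cC)\sqcup\obj(\cB\otimes\cC) = (\obj(\cA)\times\obj(\cC))\sqcup(\obj(\cB)\times\obj(\cC))$; the canonical bijection between these two sets (distributivity of $\times$ over $\sqcup$) defines $F$ on objects. Second, verify the equality of morphism complexes on each of the four cases, according to which of $\cA$ or $\cB$ the source and target components lie in:
\begin{align*}
\bigl(T(\cA,\cB;\B)\otimes\cC\bigr)\bigl((x,w),(y,z)\bigr) &= T(\cA,\cB;\B)(x,y)\otimes\cC(w,z),\\
T(\cA\otimes\cC,\cB\otimes\cC;\B\otimes\cC)\bigl((x,w),(y,z)\bigr) &= \begin{cases} (\cA\otimes\cC)((x,w),(y,z)) & x,y\in\cA,\\ (\cB\otimes\cC)((x,w),(y,z)) & x,y\in\cB,\\ (\B\otimes\cC)((y,z),(x,w)) & x\in\cA,\, y\in\cB,\\ 0 & x\in\cB,\, y\in\cA.\end{cases}
\end{align*}
In the two diagonal cases these coincide by the very definition of the tensor product of dg categories; in the two off-diagonal cases they coincide by the natural definition $(\B\otimes\cC)((y,z),(x,w)) := \B(y,x)\otimes\cC(w,z)$ and the fact that the zero entry of the block remains zero after tensoring with any complex of hom spaces in $\cC$.

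Third, check compatibility with composition and units. Since composition in $T(\cA,\cB;\B)$ is itself defined ``block-wise'' (composition in $\cA$, composition in $\cB$, and the two $\cA$-$\cB$-actions on $\B$ on the off-diagonal), and since composition in a tensor product of dg categories is the tensor of the two component compositions, both sides compute composition by the same formula once one has identified the hom complexes as above. The same remark applies to identity morphisms. Hence $F$ is a well-defined isomorphism of dg categories, which moreover is canonical because none of the identifications above required any choice.

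No step is an obstacle; the whole content of the lemma is that the elementary constructions $T(-,-;-)$ and $-\otimes\cC$ commute, and the only care needed is keeping straight which hom space is which in the off-diagonal blocks, so that the conventions for the bimodule $\B\otimes\cC$ match the conventions used inside $T(\cA,\cB;\B)$.
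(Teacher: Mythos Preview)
Your proposal is correct and is exactly the routine verification the paper has in mind: the paper's own proof reads in full ``The proof is simple and we leave it to the reader.'' Your explicit unwinding of the block structure and the distributivity of $\times$ over $\sqcup$ is precisely the intended argument.
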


\begin{proof}
The proof is simple and we leave it to the reader.
\end{proof}

\subsection{Noncommutative motives}
\label{subsection: nc motives}
We recall here some basic concepts on noncommutative motives. For a book, resp. survey, consult \cite{Tab15a}, resp. \cite{Tab18a}. Recall from \cite[\S 4.1]{Tab15a} the construction of the category of \textit{noncommutative Chow motives} with $\QQ$-coefficients $\NChow(k)_\QQ$. 
This category is $\QQ$-linear, additive, idempotent complete, rigid symmetric monoidal (i.e., all its objects are dualizable) and is equipped with a symmetric monoidal functor $U(-)_\QQ: \dgcat_{\mathrm{sp}}(k) \rightarrow \NChow(k)_\QQ$. Moreover, given any two smooth proper dg categories $\cA$ and $\cB$, we have the following computation:
\begin{align}
\label{Hom Nchow}
    \Hom_{\NChow(k)_\QQ}(U(\cA)_\QQ, U(\cB)_\QQ):=K_0(\cD_c(\cA^{\mathrm{op}} \otimes \cB))_\QQ=K_0(\cA^{\mathrm{op}} \otimes \cB)_\QQ .
\end{align}
Furthermore, the composition law in $\NChow(k)_\QQ$ is induced by the (derived) tensor product of bimodules.

Finally, recall from \cite[\S 4.4 \& \S4.6]{Tab15a} the construction of the categories of noncommutative Voevodsky motives NVoev$(k)_\QQ$ and noncommutative numerical motives NNum$(k)_\QQ$. These categories are also $\QQ$-linear, additive, idempotent complete, and rigid symmetric monoidal.

\section{Equivalence relations}\label{equiv rel}

Let $\cA$ be a smooth proper dg category. In this section, we introduce three different equivalence relations on the rational Grothendieck group $K_0(\cA)_\QQ:=K_0( \cD_c (\cA))_\QQ$ and compare them with their classical commutative counterparts.

\subsection{Algebraic equivalence relation}
\label{subsection alg}
Given a smooth connected $k$-scheme $T$ and a $k$-rational point $p: \Spec (k) \rightarrow T$, consider the associated pull-back dg functor $p^*: \perfdg (T) \rightarrow \perfdg (\Spec (k))$. Note that $\perfdg (\Spec (k))$ and $k$ are Morita equivalent.

\begin{definition}
\label{def: algebrico}
An element $\alpha \in K_0(\cA)_\QQ$ is called \emph{algebraically trivial} if it belongs to the image of the homomorphism
\begin{align}
    \bigoplus_{(T,p,q)} K_0( \cA \otimes \perfdg(T))_\QQ \xrightarrow[]{\bigoplus (K_0(id \otimes p^*)_\QQ - K_0(id \otimes q^*)_\QQ)} K_0(\cA)_\QQ,
    \label{alg rel}
\end{align}
where the direct sum ranges over all smooth connected $k$-schemes $T$ equipped with two $k$-rational points $p$ and $q$.
\end{definition}

\begin{remark}
\label{remark: k alg closed}
In the particular case where $k$ is algebraically closed, any two $k$-rational points of $T$ can be joined by a smooth projective $k$-curve. Hence, in this particular case, it suffices to consider the triples $(C,p,q)$ where $C$ is a smooth projective $k$-curve equipped with two $k$-rational points $p$ and $q$.
\end{remark}

The above homomorphism (\ref{alg rel}) gives rise to the \textit{algebraic equivalence relation} $\sim_\alg$ on $K_0(\cA)_\QQ$. In what follows, we will write $K_0(\cA)_\QQ /_{\!\sim_\alg}$ for the associated quotient, i.e., for the cokernel of (\ref{alg rel}).

\begin{lemma}
\label{lemma: k algebrico sends morita to iso}
The functor $K_0(-)_\QQ/_{\!\sim_\alg} : \dgcat_{\mathrm{sp}}(k) \rightarrow \Vect (\QQ)$ sends Morita equivalences to isomorphisms.
\end{lemma}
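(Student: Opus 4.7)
The plan is to start from the well-known fact that a Morita equivalence $F:\cA\to\cB$ induces, by restriction to compact objects, an equivalence $\cD_c(\cA)\simeq\cD_c(\cB)$, and hence an isomorphism $K_0(F)_\QQ: K_0(\cA)_\QQ\xrightarrow{\cong} K_0(\cB)_\QQ$. It then suffices to show that $K_0(F)_\QQ$ carries algebraically trivial classes bijectively onto algebraically trivial classes; the induced map on cokernels of (\ref{alg rel}) will then automatically be an isomorphism.

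The single non-formal ingredient is the stability of Morita equivalences under tensor products: for any dg category $\cC$ the induced dg functor $F\otimes\mathrm{id}_\cC:\cA\otimes\cC\to\cB\otimes\cC$ is again a Morita equivalence. This is a standard feature of Tabuada's monoidal Morita model structure on $\dgcat(k)$. Specialized to $\cC=\perfdg(T)$ for each smooth connected $k$-scheme $T$, it yields that $K_0(F\otimes\mathrm{id})_\QQ:K_0(\cA\otimes\perfdg(T))_\QQ\xrightarrow{\cong} K_0(\cB\otimes\perfdg(T))_\QQ$ is an isomorphism.

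Next, for any $k$-rational point $p:\Spec(k)\to T$ the equality of dg functors $(\mathrm{id}_\cB\otimes p^*)\circ(F\otimes\mathrm{id})=F\circ(\mathrm{id}_\cA\otimes p^*)$ holds on the nose. Applying $K_0(-)_\QQ$, subtracting the analogous equality for $q$, and summing over all triples $(T,p,q)$, one obtains a commutative square whose horizontal arrows are the isomorphisms $\bigoplus K_0(F\otimes\mathrm{id})_\QQ$ and $K_0(F)_\QQ$ and whose vertical arrows are precisely the defining homomorphisms (\ref{alg rel}) for $\cA$ and $\cB$. Since the top horizontal arrow is an isomorphism, $K_0(F)_\QQ$ matches the images of the two vertical arrows bijectively; passing to cokernels gives the desired isomorphism $K_0(\cA)_\QQ/_{\!\sim_\alg}\xrightarrow{\cong} K_0(\cB)_\QQ/_{\!\sim_\alg}$.

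The only (mild) obstacle is invoking correctly the monoidal invariance of Morita equivalences, ensuring that $F\otimes\mathrm{id}_{\perfdg(T)}$ does descend to an equivalence on compact derived categories; once this is granted, the remainder is a formal diagram chase and does not require any properness or smoothness hypothesis on $T$ beyond what is already stipulated in Definition~\ref{def: algebrico}.
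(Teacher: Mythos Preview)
Your proposal is correct and follows essentially the same approach as the paper: both argue that $K_0(F)_\QQ$ and each $K_0(F\otimes\mathrm{id}_{\perfdg(T)})_\QQ$ are isomorphisms (using that Morita equivalences are stable under tensoring), assemble these into a commutative square with the defining maps (\ref{alg rel}), and pass to cokernels. The only cosmetic difference is that you make the commutativity explicit via $(\mathrm{id}_\cB\otimes p^*)\circ(F\otimes\mathrm{id})=F\circ(\mathrm{id}_\cA\otimes p^*)$, whereas the paper simply asserts the diagram commutes.
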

\begin{proof}

Let $F: \cA \rightarrow \cB$ be a Morita equivalence in $\dgcat_{\mathrm{sp}}(k)$. Since the functor $K_0(-)_\QQ$ is an additive invariant (consult \cite[\S 2]{Tab15a}), $K_0(F)_\QQ$ is an isomorphism. 
Moreover, since Morita equivalences are preserved by tensor products over a field (consult \cite[\S 1.6.4]{Tab15a}), we have a Morita equivalence $F \otimes id_{\perfdg(T)}$ and, consequently, an isomorphism $K_0(F \otimes id_{\perfdg(T)})_\QQ$. We then obtain the following commutative diagram:

\begin{align}
\label{diagram: k alg sends morita to iso}
\begin{split}
\xymatrix@C=5pc @R=4pc{
\bigoplus_{(T,p,q)}K_0(\cA \otimes \perfdg(T))_\QQ  \ar[d]_-{\bigoplus K_0(F \otimes id_{\perfdg(T)})_\QQ}^-{\simeq} \ar[r]^-{(\ref{alg rel})} & K_0(\cA)_\QQ \ar[d]^-{K_0(F)_\QQ}_-{\simeq} \ar@{->>}[r]  & K_0(\cA)_\QQ/_{\!\sim_\alg}                               \ar[d]^{K_0(F)_\QQ/_{\!\sim_\alg}}  \\
\bigoplus_{(T,p,q)}K_0(\cB \otimes \perfdg(T))_\QQ  \ar[r]_-{(\ref{alg rel})}  & K_0(\cB)_\QQ \ar@{->>}[r]  & K_0(\cB)_\QQ/_{\!\sim_\alg}.}
\end{split}
\end{align}

Thanks to Definition \ref{def: algebrico}, we hence conclude from (\ref{diagram: k alg sends morita to iso}) that $K_0(F)_\QQ/_{\!\sim_\alg}$ is an isomorphism.\end{proof}

\begin{lemma}
\label{lemma: K algebrico 2nd prop additive}
Let $\cA$ be a smooth proper dg category such that $\mathrm{H}^0(\cA)=\langle \cfb, \cfc \rangle$ admits a semi-orthogonal decomposition in the sense of Bondal-Orlov \cite{BondalOrlov02}. Let $\cfb^{\dg}$ and $\cfc^{\dg}$ denote, respectively, the dg enhancement of $\cfb$ and $\cfc$ induced from $\cA$.
Under these assumptions, the inclusions of $\cfb^{\dg}$ and $\cfc^{\dg}$ into $\cA$ induce an isomorphism:
\begin{align*}
    K_0(\cfb^{\dg})_\QQ/_{\!\sim_\alg} \oplus  K_0(\cfc^{\dg})_\QQ/_{\!\sim_\alg} \stackrel{\simeq}{\longrightarrow} K_0(\cA)_\QQ/_{\!\sim_\alg}.
\end{align*}
\end{lemma}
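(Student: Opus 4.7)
First, I would invoke the standard gluing description of semi-orthogonal decompositions in the dg setting: the decomposition $\mathrm{H}^0(\cA) = \langle \cfb, \cfc \rangle$ is encoded by a Morita equivalence
\begin{align*}
\cA \simeq T(\cfb^{\dg}, \cfc^{\dg}; \B),
\end{align*}
where $\B$ is the $\cfc^{\dg}$-$\cfb^{\dg}$-bimodule recording the gluing data (this is a well-known consequence of the fact that Bondal-Orlov semi-orthogonal decompositions lift canonically to the dg enhancement; consult, for instance, Kuznetsov's treatment and \cite[\S 1.6.3]{Tab15a}). Thanks to Lemma \ref{lemma: k algebrico sends morita to iso}, we may therefore replace $\cA$ throughout by the triangular dg category $T(\cfb^{\dg}, \cfc^{\dg}; \B)$, and the inclusions of $\cfb^{\dg}$ and $\cfc^{\dg}$ into $\cA$ become, up to isomorphism, the canonical dg inclusions $i_{\cfb^{\dg}}$ and $i_{\cfc^{\dg}}$ introduced before Lemma \ref{Lemma Triangular-system}.

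Second, since $K_0(-)_\QQ$ is an additive invariant of dg categories (see \cite[\S 2]{Tab15a}), the inclusions $i_{\cfb^{\dg}}$, $i_{\cfc^{\dg}}$ induce a direct sum decomposition
\begin{align*}
K_0(\cfb^{\dg})_\QQ \oplus K_0(\cfc^{\dg})_\QQ \xrightarrow{\;\simeq\;} K_0\bigl(T(\cfb^{\dg}, \cfc^{\dg}; \B)\bigr)_\QQ.
\end{align*}
Moreover, for every smooth connected $k$-scheme $T$, Lemma \ref{Lemma Triangular-system} yields the identification $T(\cfb^{\dg}, \cfc^{\dg}; \B) \otimes \perfdg(T) = T(\cfb^{\dg} \otimes \perfdg(T), \cfc^{\dg} \otimes \perfdg(T); \B \otimes \perfdg(T))$, so the same additivity gives
\begin{align*}
K_0(\cfb^{\dg} \otimes \perfdg(T))_\QQ \oplus K_0(\cfc^{\dg} \otimes \perfdg(T))_\QQ \xrightarrow{\;\simeq\;} K_0(\cA \otimes \perfdg(T))_\QQ.
\end{align*}

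Third, I would check that the homomorphism (\ref{alg rel}) defining algebraic triviality is compatible with these two direct sum decompositions. Concretely, for each triple $(T,p,q)$, the naturality of $i_{\cfb^{\dg}}$ and $i_{\cfc^{\dg}}$ with respect to both $-\otimes \perfdg(T)$ and the pull-backs $p^*$, $q^*$ implies that the map $K_0(id \otimes p^*)_\QQ - K_0(id \otimes q^*)_\QQ$ on $K_0(\cA \otimes \perfdg(T))_\QQ$ coincides with the direct sum of the analogous maps on the two summands. Assembling this over all triples $(T,p,q)$ and passing to cokernels then yields the desired isomorphism.

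The main technical obstacle is the very first step: rigorously producing the Morita equivalence $\cA \simeq T(\cfb^{\dg}, \cfc^{\dg}; \B)$ from a Bondal-Orlov semi-orthogonal decomposition of $\mathrm{H}^0(\cA)$. Once this dg upgrade is in place, the remaining arguments reduce to routine bookkeeping using the additivity of $K_0$, Lemmas \ref{Lemma Triangular-system}-\ref{lemma: k algebrico sends morita to iso}, and the naturality of the triangular inclusions.
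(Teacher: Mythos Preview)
Your proposal is correct and follows essentially the same route as the paper's proof: replace $\cA$ by the triangular dg category $T(\cfb^{\dg},\cfc^{\dg};\B)$ via a Morita equivalence, use the additivity of $K_0(-)_\QQ$ together with Lemma~\ref{Lemma Triangular-system} to split the source of the map~(\ref{alg rel}) as a direct sum indexed by the two pieces, and then pass to cokernels. The only cosmetic difference is that the paper takes $\B=\prescript{}{id}{\cA}$ explicitly and finishes with a short diagram chase rather than invoking the cokernel functor directly, whereas you invoke Lemma~\ref{lemma: k algebrico sends morita to iso} up front to handle the Morita step at the level of the quotients; both amount to the same argument.
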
 
\begin{proof}
Consider the dg category $T(\cfb^{\dg},\cfc^{\dg}; \prescript{}{id}{\cA})$, which is known to be smooth and proper; see \cite[Proposition 4.9]{KuznLunts15}. 
Following the proof of \cite[Proposition 2.2]{Tab15a}, the inclusion of $T(\cfb^{\dg},\cfc^{\dg}; \prescript{}{id}{\cA})$ into $\cA$ is a Morita equivalence.
Therefore, since the functor $K_0(-)_\QQ$ is an additive invariant, we obtain an induced isomorphism between $K_0(T(\cfb^{\dg},\cfc^{\dg};\prescript{}{id}{\cA}))_\QQ$ and $K_0(\cA)_\QQ$.
Consequently, in order to finish the proof, it suffices to show that the inclusions of $\cfb^{\dg}$ and $\cfc^{\dg}$ into $T(\cfb^{\dg},\cfc^{\dg}; \prescript{}{id}{\cA})$ induce an isomorphism 
\begin{align}
\label{useful iso}
K_0(\cfb^{\dg})_\QQ/_{\!\sim_\alg} \oplus  K_0(\cfc^{\dg})_\QQ/_{\!\sim_\alg} \longrightarrow K_0(T(\cfb^{\dg},\cfc^{\dg}; \prescript{}{id}{\cA}))_\QQ/_{\!\sim_\alg}.  
\end{align}

Consider the following commutative diagram
\begin{align*}
\resizebox{19.024cm}{!}{
\xymatrix@C=2.5pc @R=4pc{
 (\bigoplus_{(T,p,q)} K_0(\cfb^{\dg} \otimes \perfdg T)_\QQ) \oplus (\bigoplus_{(T,p,q)} K_0(\cfc^{\dg} \otimes \perfdg T)_\QQ) \ar[r]^-{(\ref{alg rel}) \oplus (\ref{alg rel})} \ar[d]  & K_0(\cfb^{\dg})_\QQ \oplus K_0(\cfc^{\dg})_\QQ \ar@{->>}[r] \ar[d]_-\simeq & K_0(\cfb^{\dg})_\QQ/_{\!\sim_\alg} \oplus K_0(\cfc^{\dg})_\QQ/_{\!\sim_\alg} \ar[d]^{(\ref{useful iso})}\\
\bigoplus_{(T,p,q)} K_0(T(\cfb^{\dg},\cfc^{\dg}; \prescript{}{id}{\cA}) \otimes \perfdg T)_\QQ \ar[r]_-{(\ref{alg rel})} & K_0(T(\cfb^{\dg},\cfc^{\dg}; \prescript{}{id}{\cA}))_\QQ \ar@{->>}[r] & K_0(T(\cfb^{\dg},\cfc^{\dg}; \prescript{}{id}{\cA}))_\QQ/_{\!\sim_\alg}}}
\end{align*}
\noindent with exact rows and whose vertical arrows are induced by the inclusions of $\cfb^{\dg}$ and $\cfc^{\dg}$ into $T(\cfb^{\dg},\cfc^{\dg}; \prescript{}{id}{\cA})$. We need to prove that (\ref{useful iso}) is an isomorphism.
By diagram chasing, we observe that (\ref{useful iso}) is surjective. Since the middle vertical map of the above diagram is an isomorphism, Lemma \ref{Lemma Triangular-system} implies that, for a fixed $(T,p,q)$, the corresponding map
\begin{align*}
K_0(\cfb^{\dg} \otimes \perfdg T)_\QQ) \oplus K_0(\cfc^{\dg} \otimes \perfdg T)_\QQ) \longrightarrow K_0(T(\cfb^{\dg},\cfc^{\dg}; \prescript{}{id}{\cA}) \otimes \perfdg T)_\QQ    
\end{align*}
is an isomorphism as well.
This implies that the left-hand-side vertical map of the above diagram is surjective. Consequently, we conclude (once again by diagram chasing) that (\ref{useful iso}) is moreover injective and hence an isomorphim.
\end{proof}

\begin{lemma}
\label{lemma: azumaya algebras}
Let $X$ be a smooth proper $k$-scheme and $\BB_0$ an Azumaya algebra over $X$. The canonical dg functor

\noindent $i_{X,\BB_0}: \perfdg(X) \rightarrow \perfdg(X, \BB_0)$ induces an isomorphism:
\begin{align*}
    K_0(i_{X,\BB_0})_\QQ/_{\!\sim_\alg}: K_0(\perfdg(X))_\QQ/_{\!\sim_\alg} \xrightarrow[]{\simeq}K_0(\perfdg(X, \BB_0))_\QQ/_{\!\sim_\alg}.
\end{align*}
\end{lemma}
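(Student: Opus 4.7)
The argument parallels the proof of Lemma \ref{lemma: k algebrico sends morita to iso}, with the role of a Morita equivalence played by the Azumaya dg functor $i_{X,\BB_0}$. The crucial input is the theorem of Tabuada and Van den Bergh on noncommutative motives of Azumaya algebras, which asserts that $i_{X,\BB_0}$ becomes an isomorphism after applying the functor $U(-)_\QQ \colon \dgcat_{\mathrm{sp}}(k) \to \NChow(k)_\QQ$. Combined with the identification (\ref{Hom Nchow}), this immediately yields that $K_0(i_{X,\BB_0})_\QQ$ itself is an isomorphism of $\QQ$-vector spaces. Moreover, since the above isomorphism lives in the rigid symmetric monoidal category $\NChow(k)_\QQ$, it remains an isomorphism after tensoring with $U(\perfdg(T))_\QQ$ for any smooth proper $k$-scheme $T$; invoking (\ref{Hom Nchow}) a second time shows that $K_0(i_{X,\BB_0} \otimes \mathrm{id}_{\perfdg(T)})_\QQ$ is an isomorphism for every such $T$.

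I would then mimic the commutative diagram (\ref{diagram: k alg sends morita to iso}), with $F$ replaced by $i_{X,\BB_0}$ and $\cA,\cB$ replaced by $\perfdg(X),\perfdg(X,\BB_0)$. By the previous step, both the middle and the left vertical arrows are isomorphisms (the latter being a direct sum of the maps discussed just above). The rows are exact by the very definition of $\sim_\alg$, so a standard diagram chase forces the right vertical arrow $K_0(i_{X,\BB_0})_\QQ/_{\!\sim_\alg}$ to be an isomorphism as well.

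The main obstacle I expect is that Definition \ref{def: algebrico} ranges over \emph{all} smooth connected $k$-schemes $T$, not just the smooth proper ones, whereas Tabuada-Van den Bergh operates inside $\dgcat_{\mathrm{sp}}(k)$. To bridge this gap I would proceed in one of two ways: either (i) appeal to Remark \ref{remark: k alg closed} (together with a base-change argument when $k$ is not algebraically closed) in order to reduce to triples $(C,p,q)$ with $C$ a smooth projective curve, which is proper; or (ii) identify $i_{X,\BB_0} \otimes \mathrm{id}_{\perfdg(T)}$ with the canonical dg functor attached to the pulled-back Azumaya algebra $p_X^{\ast}\BB_0$ on $X \times T$ (using an analog of Lemma \ref{Lemma Triangular-system} to compare tensor products with $\perfdg(T)$) and apply an extension of Tabuada-Van den Bergh to smooth, not necessarily proper, schemes. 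Either route feeds back into the diagram chase above and delivers the claimed isomorphism.
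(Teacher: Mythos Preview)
Your proposal is correct and follows essentially the same route as the paper: invoke Tabuada--Van den Bergh \cite[Propositions~8.3 and~8.17]{TabVB15} to get that $U(i_{X,\BB_0})_\QQ$ is an isomorphism in $\NChow(k)_\QQ$, use the symmetric monoidality of $U(-)_\QQ$ to propagate this to $U(i_{X,\BB_0}\otimes id_{\perfdg(T)})_\QQ$, read off via (\ref{Hom Nchow}) that both $K_0(i_{X,\BB_0})_\QQ$ and $K_0(i_{X,\BB_0}\otimes id_{\perfdg(T)})_\QQ$ are invertible, and conclude by the diagram chase you describe (this is exactly the paper's diagram~(\ref{diagram: azumaya})).

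Regarding the obstacle you raise about Definition~\ref{def: algebrico} ranging over all smooth connected $T$ rather than smooth proper ones: the paper does \emph{not} address this point---it simply runs the symmetric-monoidality argument and cites (\ref{Hom Nchow}) without comment, so your instinct that something needs to be said is sharper than the published proof. In practice the results of \cite{TabVB15} on Azumaya algebras are stated in enough generality (for additive invariants and rational coefficients over arbitrary quasi-compact quasi-separated schemes) that your option~(ii) goes through directly, and this is presumably what the paper is implicitly relying on; your option~(i) would require additional care with base change along $\overline{k}/k$ that neither you nor the paper spells out.
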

\begin{proof}

Thanks to \cite[Propositions 8.3 and 8.17]{TabVB15}, the canonical dg functor $i_{X,\BB_0}$ induces an isomorphism $U(i_{X,\BB_0})_\QQ$ in the category of noncommutative Chow motive $\NChow(k)_\QQ$.
As a consequence, since the functor $U(-)_\QQ$ is symmetric monoidal, the morphism $U(i_{X,\BB_0} \otimes id_{\perfdg(T)})_\QQ$ is also an isomorphism.
Thanks to the computation (\ref{Hom Nchow}), this implies that the following two maps are invertible:
\begin{align*}
        K_0(i_{X,\BB_0})_\QQ: K_0(\perfdg(X))_\QQ &\stackrel{\simeq}{\longrightarrow}  K_0(\perfdg(X, \BB_0))_\QQ;\\
    K_0(i_{X,\BB_0} \otimes id_{\perfdg(T)})_\QQ: K_0(\perfdg(X) \otimes \perfdg (T))_\QQ &\stackrel{\simeq}{\longrightarrow}  K_0(\perfdg(X, \BB_0) \otimes \perfdg (T))_\QQ.
\end{align*}

Therefore, we obtain the following commutative diagram:

\begin{align}
\label{diagram: azumaya}
\begin{split}
\xymatrix@C=3pc @R=3.5pc{
\bigoplus_{(T,p,q)}K_0(\perfdg(X) \otimes \perfdg(T))_\QQ  \ar[d]_-{\bigoplus K_0(i_{X,\BB_0} \otimes id_{\perfdg(T)})_\QQ}^{\simeq} \ar[r]^-{(\ref{alg rel})} & K_0(\perfdg(X))_\QQ    \ar[d]^-{K_0(i_{X,\BB_0})_\QQ}_-{\simeq} \ar@{->>}[r]  & K_0(\perfdg(X))_\QQ/_{\!\sim_\alg}                               \ar[d]^{K_0(i_{X,\BB_0})_\QQ/_{\!\sim_\alg}}  \\
\bigoplus_{(T,p,q)}K_0(\perfdg(X, \BB_0) \otimes \perfdg(T))_\QQ  \ar[r]_-{(\ref{alg rel})}  & K_0(\perfdg(X, \BB_0))_\QQ \ar@{->>}[r]  & K_0(\perfdg(X, \BB_0))_\QQ/_{\!\sim_\alg}.}
\end{split}
\end{align}
Thanks to Defintion \ref{def: algebrico}, we hence conclude from (\ref{diagram: azumaya}) that $K_0(i_{X,\BB_0})_\QQ/_{\!\sim_\alg}$ is an isomorphism.
\end{proof}

\subsection{Nilpotence equivalence relation}
\label{subsection nil}
Given an integer $m\geq 1$ consider the following functor:
\begin{align}
\prod _{i=1}^m\cD_c(\cA) \longrightarrow \cD_c(\cA^{\otimes m}) \qquad
\{M_i\}_{1 \leq i \leq m}  \longmapsto \otimes_{i=1}^m M_i.
\label{step to nil rel}
\end{align}
The functor (\ref{step to nil rel}) is triangulated in each one of its variables. Hence, it gives rise to the following multilinear pairing:
\begin{align}
\prod _{i=1}^m K_0(\cA)_\QQ \longrightarrow K_0(\cA^{\otimes m})_\QQ \qquad
\{[M_i]\}_{1 \leq i \leq m}  \longmapsto [\otimes_{i=1}^m M_i].
\label{nil rel}
\end{align}

\begin{definition}
An element $\alpha \in K_0(\cA)_\QQ$ is called \emph{nilpotently trivial} if there is an integer $m \geq 1$ such that the image of $\underbrace{(\alpha, \ldots, \alpha)}_{m\text{-copies}} $ under the multilinear pairing (\ref{nil rel}) is equal to $0$.
\end{definition}

The above multilinear pairing (\ref{nil rel}) gives rise to the \textit{nilpotence equivalence relation} $\sim_\nil$ on $K_0(\cA)_\QQ$. In what follows, we will write $K_0(\cA)_\QQ/_{\!\sim_\nil}$ for the associated quotient.

\begin{remark}
\label{remark: nil nchow}
Recall from \S \ref{subsection: nc motives} the following computation in the category of noncommutative Chow motives:
\begin{align*}
    \Hom_{\NChow(k)_\QQ} ( U(k)_\QQ, U(\cA)_\QQ)=K_{0}(\cD_c(k^{\mathrm{op}} \otimes \cA))_\QQ=K_{0}(\cA)_\QQ.
\end{align*}
This enables the following re-phrasing of the nilpotence equivalence relation: an element $\alpha \in K_0(\cA)_\QQ$ is nilpotently trivial if and only if the associated morphism $\alpha: U(k)_\QQ \rightarrow U(\cA)_\QQ$ in $\NChow (k)_\QQ$ is $\otimes $-nilpotent, or, equivalently, if and only if the associated morphism $\alpha: U(\cA^{\mathrm{op}})_\QQ \rightarrow U(k)_\QQ$ in $\NChow (k)_\QQ$ is $\otimes $-nilpotent.
\end{remark}

\subsection{Numerical equivalence relation}
\label{subsection num}

Consider the following Euler bilinear pairing:
\begin{align}
    \chi: K_0(\cA)_\QQ \times K_0(\cA)_\QQ \longrightarrow \QQ \qquad
    ( [M],[N]) \longmapsto \sum_{n} (-1)^n \dim_k \Hom_{\cD_c(\cA)}(M, N[n]).
\label{euler pairing}
\end{align}

Although the bilinear pairing (\ref{euler pairing}) is not symmetric nor skew-symmetric, its left and right kernels agree; consult \cite[\S 4]{Tab15a}. Let us then write $\ker(\chi)$ for this (unique) kernel.

\begin{definition}
An element $\alpha \in K_0(\cA)_\QQ$ is called \emph{numerically trivial} if it belongs to $\ker(\chi)$. In other words, we have $\chi (\alpha, \beta)=0$ (or, equivalently, $\chi (\beta, \alpha)=0$) for every $\beta \in K_0(\cA)_\QQ$.
\label{num rel}
\end{definition}

Definition \ref{num rel} gives rise to the \textit{numerical equivalence relation} $\sim_\num$ on $K_0(\cA)_\QQ$. In what follows we will write $K_0(\cA)_\QQ/_{\!\sim_\num}$ for the associated  quotient.

\subsection{Comparison between the different equivalence relations}

In this subsection we compare the different equivalence relations introduced in \S \ref{subsection alg}- \S \ref{subsection num}.
\begin{theorem}
\label{ordering}
Given an element $\alpha \in K_0(\cA)_\QQ$, we have the implications $\alpha \sim_\alg 0 \Rightarrow \alpha \sim_\nil 0 \Rightarrow \alpha \sim_\num 0$. 
\end{theorem}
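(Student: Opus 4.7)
I plan to treat the two implications separately.

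For $\alpha \sim_\nil 0 \Rightarrow \alpha \sim_\num 0$, the idea is to argue abstractly inside the rigid symmetric monoidal $\QQ$-linear category $\NChow(k)_\QQ$. By Remark \ref{remark: nil nchow}, I would view the nilpotently trivial class $\alpha$ as a $\otimes$-nilpotent morphism $\alpha\colon U(k)_\QQ \to U(\cA)_\QQ$, and, for any $\beta \in K_0(\cA)_\QQ$, use rigidity to dualize $\beta$ into a morphism $\widetilde{\beta}\colon U(\cA)_\QQ \to U(k)_\QQ$. The composition $\widetilde{\beta} \circ \alpha$ then lies in $\hend_{\NChow(k)_\QQ}(U(k)_\QQ) = \QQ$ and should match $\chi(\alpha,\beta)$ up to sign. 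Since $(\widetilde{\beta} \circ \alpha)^{\otimes m} = \widetilde{\beta}^{\otimes m} \circ \alpha^{\otimes m} = 0$ for $m$ sufficiently large, I would conclude $\widetilde{\beta} \circ \alpha = 0 \in \QQ$, hence $\chi(\alpha,\beta) = 0$.

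For the substantial implication $\alpha \sim_\alg 0 \Rightarrow \alpha \sim_\nil 0$, I plan three reductions. First, I would check that the $\otimes$-nilpotent morphisms in $\Hom_{\NChow(k)_\QQ}(U(k)_\QQ, U(\cA)_\QQ)$ are closed under addition: if $f^{\otimes m} = 0 = g^{\otimes n}$, every summand in the expansion of $(f+g)^{\otimes(m+n-1)}$ inside the symmetric monoidal category contains either $m$ tensor factors of $f$ or $n$ of $g$ and therefore vanishes. This reduces to a single summand of (\ref{alg rel}), say $\alpha = (K_0(\mathrm{id} \otimes p^*)_\QQ - K_0(\mathrm{id} \otimes q^*)_\QQ)(\beta)$ for a fixed triple $(T,p,q)$ and a fixed $\beta \in K_0(\cA \otimes \perfdg(T))_\QQ$. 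Second, I would embed $T$ as an open subscheme of a smooth projective compactification $j\colon T \hookrightarrow \bar{T}$ (via resolution of singularities, or de Jong alterations in positive characteristic), use the localization exact sequence in $K$-theory to lift $\beta$ to some $\bar{\beta} \in K_0(\cA \otimes \perfdg(\bar{T}))_\QQ$, and replace $(T,p,q)$ by $(\bar{T}, \bar{p}, \bar{q})$ using the identity $\bar{p}^* = p^* \circ j^*$, thereby reducing to $T$ smooth projective.

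Third, with $T$ smooth projective, I would factor $\alpha$ in $\NChow(k)_\QQ$ as
\[
U(k)_\QQ \xrightarrow{\beta} U(\cA)_\QQ \otimes U(\perfdg(T))_\QQ \xrightarrow{\mathrm{id} \otimes (p^*-q^*)} U(\cA)_\QQ \otimes U(k)_\QQ \simeq U(\cA)_\QQ.
\]
Since $\otimes$-nilpotent morphisms form a monoidal ideal, it would suffice to show that $p^* - q^* \colon U(\perfdg(T))_\QQ \to U(k)_\QQ$ is itself $\otimes$-nilpotent. I would obtain this by passing through the symmetric monoidal functor from the orbit category of commutative Chow motives (modulo Tate twists) into $\NChow(k)_\QQ$: on the commutative side, $p^*-q^*$ corresponds to $[p]-[q] \in CH^*(T)_\QQ$, which is algebraically trivial because $T$ is connected, and Voevodsky's classical theorem \cite{Voevodsky95} upgrades this to nilpotent triviality, which the monoidal functor then transfers to the noncommutative setting.

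The main obstacle will be exactly this last step: the reductions and the monoidal-ideal manipulations are formal, but the input that $p^*-q^*$ is $\otimes$-nilpotent is a genuine theorem, resting on Voevodsky's commutative nilpotence result together with the careful orbit-category bridge between commutative and noncommutative Chow motives.
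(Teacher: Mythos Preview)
Your argument for $\alpha \sim_\nil 0 \Rightarrow \alpha \sim_\num 0$ is essentially the paper's: both view $\alpha$ as a $\otimes$-nilpotent morphism $U(k)_\QQ \to U(\cA)_\QQ$, pair it with an arbitrary class to land in $\hend_{\NChow(k)_\QQ}(U(k)_\QQ)\simeq\QQ$, and use that a $\otimes$-nilpotent rational number is zero.

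For $\alpha \sim_\alg 0 \Rightarrow \alpha \sim_\nil 0$, your overall shape (factor through $U(\perfdg(T))_\QQ$ and invoke Voevodsky's commutative result via the bridge of diagram~(\ref{diagram: chow and nchow})) matches the paper, but your reduction to a smooth \emph{projective} parameter scheme is different and contains a gap. The paper does not compactify $T$ at all: it first base-changes to an algebraic closure $\overline{k}$ via Proposition~\ref{prop: algebraic closure} (using that $\sim_\alg$ is preserved by $-\otimes_k\overline{k}$ while $\sim_\nil$ is even \emph{reflected} by it), and then, over $\overline{k}$, replaces $T$ by a smooth projective \emph{curve} connecting $p$ and $q$ via Remark~\ref{remark: k alg closed}. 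This works uniformly in every characteristic. Your plan instead compactifies $T$ to a smooth projective $\bar{T}$; in characteristic zero Hironaka indeed gives an open immersion $T\hookrightarrow\bar{T}$, but your parenthetical ``or de Jong alterations in positive characteristic'' does not: an alteration yields a smooth projective $\bar{T}'$ together with a proper, generically finite map to a compactification of $T$, \emph{not} an open embedding of $T$, and the $k$-rational points $p,q$ need not lift to $k$-rational points of $\bar{T}'$. So as written your argument only covers characteristic zero. A secondary point: even in characteristic zero, the lifting of $\beta$ to $K_0(\cA\otimes\perfdg(\bar{T}))_\QQ$ via localization requires the surjectivity of $K_0(\cA\otimes\perfdg(\bar{T}))_\QQ\to K_0(\cA\otimes\perfdg(T))_\QQ$, i.e.\ the vanishing of a negative $K$-group of $\cA\otimes\perfdg(\bar{T})_Z$; this is plausible but not as immediate as in the untwisted case and deserves a word of justification.
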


\begin{proof}
We start by proving the implication $\alpha \sim_\alg 0 \Rightarrow \alpha \sim_\nil 0$. Thanks to Proposition \ref{prop: algebraic closure} below, we can assume without loss of generality that $k$ is algebraically closed. Take $\alpha \in K_0(\cA)_\QQ$ an algebraically trivial element.
Recall from Remark \ref{remark: nil nchow} that it is enough to show that the associated morphism $\alpha : U(\cA^{\mathrm{op}})_\QQ \rightarrow U(k)_\QQ$ in the category $\NChow(k)_\QQ$ is $\otimes$-nilpotent.
Since $k$ is algebraically closed and the nilpotently trivial elements form a $\QQ$-linear subspace of $K_0(\cA)_\QQ$, we can assume that there exists a single smooth projective curve $C$, equipped with two $k$-rational points $p$ and $q$, and a element $\beta \in K_0(\cA \otimes \perfdg (C))_\QQ$ such that $\alpha= K_0(id_\cA \otimes p^*)_\QQ(\beta)-K_0(id_\cA \otimes q^*)_\QQ(\beta)$; see Remark \ref{remark: k alg closed}.
Making use of the following computations
\begin{align*}
\Hom_{\NChow(k)_\QQ}(U(\cA^{\mathrm{op}})_\QQ,U(\perfdg (C))_\QQ)\simeq K_0(\cA \otimes_k \perfdg (C))_\QQ \qquad \Hom_{\NChow(k)_\QQ}(U(\cA^{\mathrm{op}})_\QQ,U(k)_\QQ)\simeq K_0(\cA),
\end{align*}
we observe that the associated morphism $\alpha : U(\cA^{\mathrm{op}})_\QQ \rightarrow U(k)_\QQ$ can be written as the following composition
\begin{align*}
    \alpha : U(\cA^{\mathrm{op}})_\QQ \xrightarrow{\quad \beta \quad} U(\perfdg (C))_\QQ  \xrightarrow{U(p^*)_\QQ - U(q^*)_\QQ} U(k)_\QQ,
\end{align*}
where $\beta$ is the morphism associated to the element $\beta \in K_0(\cA \otimes \perfdg (C))_\QQ$.
Consequently, once we show that $U(p^*)_\QQ - U(q^*)_\QQ$ is $\otimes$-nilpotent, we conclude that $\alpha$ is $\otimes$-nilpotent too.
Let $\Chow(k)_\QQ$ be the classical category of Chow motives; see Manin \cite{Manin}. This category is $\QQ$-linear, additive, idempotent complete, and rigid symmetric monoidal.
In addition, it has a symmmetric monoidal functor $\cfh(-)_\QQ: \text{SmProp}(k)^{\mathrm{op}} \rightarrow \Chow(k)_\QQ$ defined on smooth proper $k$-schemes. Following \cite[Theorem 4.3]{Tab15a}, there exists a $\QQ$-linear, fully-faithfull, symmmetric monoidal functor $\Phi$ making the following diagram commutative:

\begin{align}
\label{diagram: chow and nchow}
\begin{split}
    \xymatrix@C=4pc @R=2pc{
\text{SmProp}(k)^{\mathrm{op}} \ar[d]_{\cfh(-)_\QQ} \ar[r]^{X \longmapsto \perfdg (X)} & \dgcat_{\mathrm{sp}}(k) \ar[dd]^{U(-)_\QQ}\\
\Chow(k)_\QQ \ar[d]_{\tau} & \\
\Chow(k)_\QQ/(-\otimes \QQ(1)) \ar[r]_-{\Phi} & \NChow(k)_\QQ,}
\end{split}
\end{align}

\noindent where we denote by $\Chow(k)_\QQ/(-\otimes \QQ(1))$ the orbit category with respect to the Tate motive $\QQ(1)$; consult \cite[\S 4.2]{Tab15a} for the definition of the orbit category. In \cite[Proposition 3.1]{Voevodsky95} it is proven that the morphism $\cfh(p)_\QQ-\cfh(q)_\QQ: \cfh(C)_\QQ \rightarrow \cfh( \Spec (k))_\QQ$, corresponding to the degree zero cycle $p-q$ on $C$, is $\otimes$-nilpotent.
Since both functors $\tau$ and $\Phi$ are symmetric monoidal, the commutativity of diagram (\ref{diagram: chow and nchow}) hence implies that $U(p^*)_\QQ-U(q^*)_\QQ$ is $\otimes$-nilpotent. This concludes the proof.

We now prove the implication $\alpha \sim_\nil 0 \Rightarrow \alpha \sim_\num 0$. Recall first from \cite[\S 5 and Proposition 6.2]{MTab12} that an element $\alpha \in K_0(\cA)_\QQ$ is numerically trivial if and only if for every $\beta \in K_0(\cA^{\mathrm{op}})_\QQ$ the associated composition 
\begin{align*}
    U(k)_\QQ \xrightarrow{\quad \alpha \quad } U(\cA)_\QQ \xrightarrow{\quad \beta \quad } U(k)_\QQ
\end{align*}
is equal to zero.
Let $\alpha\in K_0(\cA)_\QQ$ be a nilpotently trivial element. Bearing in mind Remark \ref{remark: nil nchow}, the associated morphism $\alpha :U(k)_\QQ \rightarrow U(\cA)_\QQ$ is $\otimes$-nilpotent.
Let us assume by absurd that $\alpha$ is not numerically trivial.
In this case, there would exist an element $\beta \in K_0(\cA^{\mathrm{op}})_\QQ$ such that the associated morphism $\beta : U(\cA)_\QQ \rightarrow U(k)_\QQ$ composed with $\alpha$ is different from zero. 
Using the fact that $\beta \circ \alpha \in \Hom_{\NChow(k)_\QQ}(U(k)_\QQ,U(k)_\QQ)\simeq \QQ$, we can further assume without loss of generality that $\beta \circ \alpha$ is the identity. But, this would then imply that $\beta \circ \alpha$ is not $\otimes$-nilpotent, which contradicts the assumption that $\alpha$ is $\otimes$-nilpotent. In conclusion, $\alpha$ is also numerically trivial.
\end{proof}

\begin{proposition}\label{prop: algebraic closure}
Let $\overline{k}/k$ be a fixed algebraic closure of $k$. Given an element $\alpha \in K_0(\cA)_\QQ$, the following holds:
\begin{align*}
\alpha \sim_\alg 0 \Rightarrow (\alpha \otimes_k \overline{k}) \sim_\alg 0 \qquad \text{ and } \qquad \alpha \sim_\nil 0 \Leftrightarrow (\alpha \otimes_k \overline{k}) \sim_\nil 0.
\end{align*}
\end{proposition}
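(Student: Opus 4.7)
For the algebraic implication, my strategy is to base-change a presentation of $\alpha$ as an algebraically trivial element. Starting from
\[
\alpha \;=\; \sum_i \bigl(K_0(\mathrm{id}\otimes p_i^*)_\QQ - K_0(\mathrm{id}\otimes q_i^*)_\QQ\bigr)(\beta_i),
\]
with $T_i$ smooth and connected over $k$, $p_i,q_i\in T_i(k)$, and $\beta_i\in K_0(\cA\otimes \perfdg(T_i))_\QQ$, the key observation is that each $T_i$ is automatically \emph{geometrically} connected: since $T_i$ is connected and $p_i$ is a $k$-rational point, the evaluation morphism $\Gamma(T_i,\cO_{T_i})\twoheadrightarrow k$ prevents any nontrivial decomposition of $(T_i)_{\overline{k}}$. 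Hence each triple $((T_i)_{\overline{k}}, (p_i)_{\overline{k}}, (q_i)_{\overline{k}})$ is admissible over $\overline{k}$. Applying the symmetric monoidal base-change $-\otimes_k\overline{k}$, which commutes with the pullbacks $p_i^*$ and identifies $(\cA\otimes\perfdg(T_i))\otimes_k\overline{k}$ with $\cA_{\overline{k}}\otimes\perfdg((T_i)_{\overline{k}})$, produces the desired expression for $\alpha\otimes_k\overline{k}$ as an algebraically trivial element of $K_0(\cA_{\overline{k}})_\QQ$.

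For the nilpotence equivalence, the forward implication is immediate: the vanishing $\alpha^{\otimes m}=0$ in $K_0(\cA^{\otimes m})_\QQ$ is preserved under $-\otimes_k\overline{k}$, giving $(\alpha\otimes_k\overline{k})^{\otimes m}=\alpha^{\otimes m}\otimes_k\overline{k}=0$ in $K_0((\cA\otimes_k\overline{k})^{\otimes_{\overline{k}} m})_\QQ$, using the canonical identification $(\cA\otimes_k\overline{k})^{\otimes_{\overline{k}} m}\cong \cA^{\otimes_k m}\otimes_k \overline{k}$. For the converse, suppose $\alpha^{\otimes m}\otimes_k\overline{k}=0$. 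I would first write $\overline{k}=\colim_{k'}k'$ as the filtered colimit of its finite subextensions and, invoking the fact that $K_0$ commutes with filtered colimits of dg categories, conclude that the vanishing already occurs at some finite stage: $\alpha^{\otimes m}\otimes_k k'=0$ in $K_0(\cA^{\otimes m}\otimes_k k')_\QQ$. I would then deploy the restriction-of-scalars functor $\cD_c(\cA^{\otimes m}\otimes_k k')\to \cD_c(\cA^{\otimes m})$ induced by the inclusion $\cA^{\otimes m}\hookrightarrow\cA^{\otimes m}\otimes_k k'$: since an $\cA^{\otimes m}$-module $M$ extended and then restricted is canonically $M\otimes_k k'\cong M^{\oplus[k':k]}$, the composition acts as multiplication by $[k':k]$ on $K_0$. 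Hence $[k':k]\cdot\alpha^{\otimes m}=0$ in $K_0(\cA^{\otimes m})_\QQ$, and dividing by $[k':k]\in\QQ^\times$ yields $\alpha^{\otimes m}=0$.

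The main technical hurdles concentrate in the converse direction of the nilpotence part, namely: (i) justifying that $K_0(-)_\QQ$ commutes with the filtered colimit $\cA^{\otimes m}\otimes_k\overline{k}=\colim_{k'}\cA^{\otimes m}\otimes_k k'$ of $k$-linear dg categories, and (ii) confirming that the restriction-of-scalars functor along $\cA^{\otimes m}\hookrightarrow\cA^{\otimes m}\otimes_k k'$ preserves compact objects and therefore descends to a well-defined map on $K_0$. Both are essentially formal consequences of $k'$ being a free $k$-module of finite rank $[k':k]$, but stating them cleanly requires invoking the standard framework of \cite{Keller06,Tab15a}. The algebraic implication, by contrast, reduces to a routine diagram chase once geometric connectedness of the $T_i$ has been observed.
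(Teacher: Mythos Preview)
Your proposal is correct and follows essentially the same route as the paper: for the algebraic implication you base-change the defining presentation (the paper packages this as the commutative square (\ref{diagram: algebraic closure})), and for the nilpotence equivalence you reduce to injectivity of $K_0(-\otimes_k\overline{k})_\QQ$ via the finite-extension restriction trick and a filtered-colimit argument, which is exactly the content of the paper's Lemma~\ref{lemma: alg closure induces injection}. One point where you are in fact more careful than the paper: you explicitly observe that a connected $k$-scheme with a $k$-rational point is geometrically connected, which is needed for $(T_i)_{\overline{k}}$ to be an admissible test scheme over $\overline{k}$; the paper's diagram (\ref{diagram: algebraic closure}) silently assumes this.
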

\begin{proof}
It follows from \cite[Proposition 7.2]{MTab14} that if $\cA \in \dgcat_{\mathrm{sp}}(k)$, then $\cA \otimes_k \overline{k} \in \dgcat_{\mathrm{sp}}(\overline{k})$.
Note also that given a smooth connected $k$-scheme $T$, $\perfdg(T) \otimes_k \overline{k}$ is a smooth $\overline{k}$-linear dg category which is canonically Morita equivalent to $\perfdg (\overline{T})$, where $\overline{T}:= T \times_{\Spec (k)} \Spec (\overline{k})$.
Hence, the implication $\alpha \sim_\alg 0 \Rightarrow \alpha \otimes_k \overline{k} \sim_\alg 0$ follows from diagram (\ref{diagram: algebraic closure}) below (where $T'$ is a smooth connected $\overline{k}$-scheme equipped with two $\overline{k}$-rational points $p'$ and $q'$):

\begin{align}
\label{diagram: algebraic closure}
\begin{split}
\xymatrix@C=4pc @R=4pc{\bigoplus_{(T',p',q')} K_0((\cA \otimes_k \overline{k}) \otimes_{\overline{k}} \perfdg (T') )_\QQ \ar[r]^-{(\ref{alg rel})}& K_0(\cA \otimes_k \overline{k})_\QQ\\
\bigoplus_{(T,p,q)} K_0(\cA \otimes_k \perfdg (T) )_\QQ \ar[r]_-{(\ref{alg rel})} \ar[u]^{K_0(-\otimes_k \overline{k})_\QQ} & K_0(\cA)_\QQ \ar[u]_{K_0(-\otimes_k \overline{k})_\QQ}.}
\end{split}
\end{align}

We now prove the equivalence $\alpha \sim_\nil 0 \Leftrightarrow (\alpha \otimes_k \overline{k}) \sim_\nil 0$. For any integer $m\geq 1$, we have the commutative diagram:

\begin{align}
\label{diagram: algebraic closure equivalence nil}
\begin{split}
\xymatrix@C=6.5pc @R=4pc{
\prod_{i=1}^m K_0(\cA \otimes_k \overline{k})_\QQ \ar[r]^-{(\ref{nil rel})}  & K_0((\cA\otimes_k \overline{k})^{\otimes_{\overline{k}} m})_\QQ\\
\prod_{i=1}^m K_0(\cA)_\QQ \ar[r]_-{(\ref{nil rel})} \ar[u]^{\prod_{i=1}^m K_0(-\otimes_k \overline{k})_\QQ} & K_0(\cA^{\otimes_k m})_\QQ \ar[u]_{K_0(-\otimes_k \overline{k})_\QQ}.}
\end{split}
\end{align}

Moreover, we have a canonical identification $\cA^{\otimes_{k}m} \otimes_{k} \overline{k} \simeq (\cA \otimes_{k} \overline{k})^{\otimes_{\overline{k}}m}$. Therefore, by applying Lemma \ref{lemma: alg closure induces injection} below to the dg category $\cA^{\otimes_{k}m}$, we conclude that the homomorphism $K_0(-\otimes_k \overline{k})_\QQ$ is injective. Consequently, the proof follows now from (\ref{diagram: algebraic closure equivalence nil}). 
\end{proof}

\begin{lemma}
\label{lemma: alg closure induces injection}
Given an algebraic closure $\overline{k}/k$, the induced homomorphism $K_0(-\otimes_k \overline{k})_\QQ: K_0(\cA)_\QQ \rightarrow K_0(\cA \otimes_k \overline{k} )_\QQ$ is injective.
\end{lemma}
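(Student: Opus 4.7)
The plan is to reduce to finite extensions and then use a transfer argument. Here is the structure I would follow.

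First, I would write the algebraic closure as a filtered colimit of its finite subextensions: $\overline{k} = \colim_{k'} k'$, where $k'$ ranges over the directed system of finite extensions of $k$ contained in $\overline{k}$. Since tensor product of dg categories commutes with filtered colimits in the second variable, and since (nonconnective) $K$-theory (and in particular $K_0$) commutes with filtered colimits of dg categories (see \cite[\S 2]{Tab15a}), I would obtain a canonical identification
\begin{equation*}
K_0(\cA \otimes_k \overline{k})_\QQ \;\simeq\; \colim_{k'} \, K_0(\cA \otimes_k k')_\QQ.
\end{equation*}
In a filtered colimit of $\QQ$-vector spaces, an element vanishes in the colimit if and only if it vanishes at some finite stage. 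Hence if $\alpha \in K_0(\cA)_\QQ$ lies in the kernel of $K_0(-\otimes_k \overline{k})_\QQ$, then there exists a finite extension $k'/k$ such that $\alpha$ already maps to zero under $K_0(-\otimes_k k')_\QQ$. This reduces the problem to showing that $K_0(-\otimes_k k')_\QQ:K_0(\cA)_\QQ \to K_0(\cA\otimes_k k')_\QQ$ is injective for every finite field extension $k'/k$.

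For the finite-extension case, the strategy is a standard transfer/restriction argument. Given a finite extension $k'/k$, the inclusion $k \hookrightarrow k'$ gives rise to the extension-of-scalars dg functor $\cA \to \cA \otimes_k k'$, but it also yields a restriction-of-scalars functor at the module level: a right dg $(\cA \otimes_k k')$-module $M$ can be viewed as a right dg $\cA$-module by composing the action with $\cA \hookrightarrow \cA \otimes_k k'$ and forgetting the $k'$-linear structure on morphism complexes. Since $k'$ is a finite-dimensional $k$-vector space, this restriction preserves compactness, so it induces a well-defined map $\mathrm{res}: K_0(\cA \otimes_k k')_\QQ \to K_0(\cA)_\QQ$. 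The key point is then the composition identity: for any $[M] \in K_0(\cA)_\QQ$, the module $M \otimes_k k'$ viewed as an $\cA$-module is isomorphic to $M^{\oplus [k':k]}$, because $k' \cong k^{[k':k]}$ as a $k$-vector space. Consequently, the composite
\begin{equation*}
K_0(\cA)_\QQ \xrightarrow{\;K_0(-\otimes_k k')_\QQ\;} K_0(\cA\otimes_k k')_\QQ \xrightarrow{\;\mathrm{res}\;} K_0(\cA)_\QQ
\end{equation*}
equals multiplication by $[k':k]$, which is invertible in $\QQ$. This forces $K_0(-\otimes_k k')_\QQ$ to be injective, finishing the proof.

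The main obstacle is the bookkeeping around restriction of scalars at the level of dg modules: one must check that restriction along a finite extension preserves compactness (so that $\mathrm{res}$ is defined on $K_0$), and that the composition really is multiplication by $[k':k]$ as claimed. Both are routine once one writes down the definitions, but they are the only non-formal inputs in the argument. The filtered-colimit reduction and the $\QQ$-coefficients then do the rest of the work automatically.
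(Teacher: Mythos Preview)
Your proposal is correct and follows essentially the same route as the paper: reduce to finite subextensions via the filtered colimit description $K_0(\cA\otimes_k\overline{k})_\QQ\simeq\colim_{k'}K_0(\cA\otimes_k k')_\QQ$, and for a finite extension $k'/k$ use the restriction map to show that $\mathrm{res}\circ K_0(-\otimes_k k')_\QQ$ is multiplication by $[k':k]$, hence injective over $\QQ$. The only difference is the order of presentation (you do the colimit reduction first, the paper does the finite case first), and you spell out more explicitly why restriction preserves compactness.
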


\begin{proof}
Assume that $\overline{k}/k$ is a finite field extension of degree $d$. In this case, the restriction along the field extension $k \rightarrow \overline{k}$ yields an homomorphism $\text{Res}_{\overline{k}/k}: K_0(\cA \otimes_k \overline{k})_\QQ \rightarrow K_0(\cA)_\QQ$ such that
$\text{Res}_{\overline{k}/k} \circ K_0(- \otimes_k \overline{k})_\QQ$ is equal to multiplication by $d$.
Therefore, since $K_0(\cA)_\QQ$ is a $\QQ$-vector space, the induced homomorphism $K_0(-\otimes_k \overline{k})_\QQ$ is injective.
In the case where the field extension $\overline{k}/k$ is infinite, $\overline{k}$ identifies with the colimit of the filtrant diagram $\{k_i\}_{i \in I}$ of all the intermediate field extensions $\overline{k}/k_i/k$ which are finite over $k$. Since both functors $-\otimes_k \overline{k}$ and $K_0(-)_\QQ$ preserve filtrant (homotopy) colimits, we have that $K_0(\cA \otimes _k \overline{k})_\QQ \simeq \colim_{i \in I} K_0(\cA \otimes_k k_i)_\QQ$ and injectivity of $K_0(-\otimes_k \overline{k})_\QQ$ follows now from the finite field extension case.
\end{proof}

\subsection{Classical equivalence relations on algebraic cycles}

Given a smooth proper $k$-scheme $X$, recall from \cite[Corollary 18.3.2]{Fulton2nd_ed} that the following map is invertible
\begin{align}
\label{iso from nc to c}
    K_0(\perfdg (X) )_\QQ \stackrel{\simeq}{\longrightarrow} \cZ^*(X)_\QQ/_{\!\sim_\rat} \qquad
    [\cF] \longmapsto \ch(\cF) \cdot \sqrt{\td_X},
\end{align}
 where $\ch(\cF)$ stands for the Chern character of $\cF$ and $\td_X$ for the Todd class of $X$.
 
\begin{theorem}
\label{theo: non commutative to classical rel}
The above map (\ref{iso from nc to c}) induces the following isomorphisms:
\begin{align}
\label{iso from nc to c in theorem}
    K_0(\perfdg(X))_\QQ/_{\!\sim_\alg} \xrightarrow{\simeq} \cZ^*(X)_\QQ/_{\!\sim_\alg} \hspace{0.13054cm} K_0(\perfdg(X))_\QQ/_{\!\sim_\nil} \xrightarrow{\simeq} \cZ^*(X)_\QQ/_{\!\sim_\nil} 
    \hspace{0.13054cm} K_0(\perfdg(X))_\QQ/_{\!\sim_\num} \xrightarrow{\simeq} \cZ^*(X)_\QQ/_{\!\sim_\num}.
\end{align}
\end{theorem}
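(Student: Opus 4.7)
The strategy is to handle each of the three equivalence relations separately and, in every case, to show that the already-established rational isomorphism (\ref{iso from nc to c}) matches the ``trivial elements'' on both sides. The key underlying input is that the tensor product of dg categories is compatible with the product of schemes: for smooth proper $k$-schemes $X$ and $T$ there is a canonical Morita equivalence $\perfdg(X)\otimes\perfdg(T)\simeq\perfdg(X\times T)$, and more generally $\perfdg(X)^{\otimes m}\simeq\perfdg(X^{\times m})$. Under (\ref{iso from nc to c}) the Morita equivalence corresponds, via the formula of Grothendieck--Riemann--Roch, to exterior product of Chow classes, and pull-back along a $k$-rational point $p\in T$ corresponds to intersection with $X\times\{p\}$.

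To handle $\sim_\alg$, I will translate Definition \ref{def: algebrico}, applied to $\cA=\perfdg(X)$, into classical language. Using the identification $\perfdg(X)\otimes\perfdg(T)\simeq\perfdg(X\times T)$ (together with Lemma \ref{lemma: k algebrico sends morita to iso}), the map $K_0(id\otimes p^\ast)-K_0(id\otimes q^\ast)$ becomes $(\mathrm{id}_X\times p)^\ast-(\mathrm{id}_X\times q)^\ast$ on $\cZ^\ast(X\times T)_\QQ/_{\!\sim_\rat}$. This is exactly the formula defining the classical algebraic equivalence relation on $\cZ^\ast(X)_\QQ$ (see \cite[\S 10.3]{Fulton2nd_ed}). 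Consequently, the rational isomorphism (\ref{iso from nc to c}) sends algebraically trivial classes bijectively to algebraically trivial classes, whence the first isomorphism in (\ref{iso from nc to c in theorem}).

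For $\sim_\nil$, I will proceed similarly: the multilinear pairing (\ref{nil rel}) for $\cA=\perfdg(X)$ factors through the symmetric monoidal equivalence $\perfdg(X)^{\otimes m}\simeq\perfdg(X^{\times m})$, under which $[M_1]\otimes\cdots\otimes[M_m]$ corresponds to the exterior product $[M_1\boxtimes\cdots\boxtimes M_m]$. Via (\ref{iso from nc to c}) and the multiplicativity of $\ch(-)\cdot\sqrt{\td}$, this exterior product on $K_0$ corresponds to the exterior product $\alpha\times\cdots\times\alpha$ of classes in $\cZ^\ast(X^{\times m})_\QQ/_{\!\sim_\rat}$, which is precisely the classical definition of nilpotence triviality. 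Hence nilpotently trivial elements match on both sides.

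For $\sim_\num$, the comparison rests on Hirzebruch--Riemann--Roch: for $M,N\in\perf(X)$ one has
\begin{align*}
\chi([M],[N])\;=\;\chi(X,M^{\vee}\otimes N)\;=\;\int_X\ch(M)^{\vee}\cdot\ch(N)\cdot\td_X.
\end{align*}
Writing $\alpha=\ch(M)\sqrt{\td_X}$ and $\beta=\ch(N)\sqrt{\td_X}$, the right-hand side becomes (up to the sign/duality involution on the second factor) the classical intersection pairing $\int_X\alpha^{\vee}\cdot\beta$, whose left and right kernels both coincide with the numerically trivial cycles. Therefore (\ref{iso from nc to c}) identifies $\ker(\chi)$ with numerically trivial cycles and induces the third isomorphism in (\ref{iso from nc to c in theorem}).

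The main obstacle, if any, is purely bookkeeping: one must check that all the Morita equivalences and symmetric monoidal identifications used above are compatible with the Todd-correction factor $\sqrt{\td_X}$ built into (\ref{iso from nc to c}). For $\sim_\nil$ this is clean because $\sqrt{\td_{X^{\times m}}}=\sqrt{\td_X}^{\boxtimes m}$; for $\sim_\alg$ it is automatic since pull-back along a $k$-rational point sends $\sqrt{\td_{X\times T}}$ to $\sqrt{\td_X}$; and for $\sim_\num$ the $\sqrt{\td_X}$ factors combine to give exactly $\td_X$, matching HRR. No further input beyond Lemma \ref{lemma: k algebrico sends morita to iso}, Lemma \ref{Lemma Triangular-system} and standard properties of Chow groups is needed.
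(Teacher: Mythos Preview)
Your proposal is correct and follows essentially the same route as the paper: the $\sim_\alg$ and $\sim_\nil$ cases are reduced to the Morita equivalences $\perfdg(X)\otimes\perfdg(T)\simeq\perfdg(X\times T)$ and $\perfdg(X)^{\otimes m}\simeq\perfdg(X^{\times m})$ together with the Todd-class compatibilities you list, and the $\sim_\num$ case is handled via (Grothendieck--)Riemann--Roch exactly as you do. The only organizational difference is that the paper factors the map (\ref{iso from nc to c}) as $\ch(-)$ followed by multiplication by $\sqrt{\td_X}$ and treats each factor in a separate lemma, whereas you run both at once; also, your closing reference to Lemma~\ref{Lemma Triangular-system} is spurious, since that lemma concerns the upper-triangular dg category $T(\cA,\cB;\B)$ and plays no role in this argument.
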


\begin{proof}
The left-hand-side and middle isomorphisms in (\ref{iso from nc to c in theorem}) follow from Lemmas \ref{lemma: chern nc to c}-\ref{lemma: todd nc to c} below.
In order to prove the right-hand-side in (\ref{iso from nc to c in theorem}), note that, given any two perfect complexes $\cF$, $\cG \in \perf(X)$, we have the following equalities
\begin{align}
\label{equation: rephrase euler pairing}
\chi([\cF],[\cG])= K_0(\pi_*([\textbf{R}\underline{\Hom}(\cF,\cG)]))_\QQ=K_0(\pi_*([\cF^* \otimes_X^{\textbf{L}} \cG] ))_\QQ,
\end{align}
where $\pi : X \rightarrow \Spec (k)$ stands for the structure map of $X$ and $\textbf{R}\underline{\Hom}(-,-)$, resp. $(-)^*$, stands for the internal $\Hom$, resp. (contravariant) duality, of the category $\perfdg (X)$. Hence, we obtain the following commmutative diagram:
\begin{align}
\label{diagram: num nc to c}
\begin{split}
    \xymatrix@C=2.5pc @R=1pc{
K_0(\perfdg (X))_\QQ \times K_0(\perfdg (X))_\QQ \ar@{=}[dd] \ar[rrrr]^-{\chi}  & & & & \QQ \ar@{=}[dd] \\
& & \circled{1} & & \\
K_0(\perfdg (X))_\QQ \times K_0(\perfdg (X))_\QQ \ar[rr]_-{K_0((-)^*\otimes^{\textbf{L}}_X -)_\QQ} \ar[dd]_{K_0((-)^*)_\QQ \times id}^{\simeq}& & K_0(\perfdg (X))_\QQ \ar[rr]_-{K_0(\pi_*)_\QQ} \ar@{=}[dd] & & \QQ \ar@{=}[dd]\\
& \circled{2} & & \circled{3} &\\
K_0(\perfdg (X))_\QQ \times K_0(\perfdg (X))_\QQ \ar[rr]^-{-\cdot -} \ar[dd]_{( \ch(-)\cdot \sqrt{\td_X}) \times( \ch(-) \cdot \sqrt{\td_X})}^{\simeq} & & K_0(\perfdg (X))_\QQ \ar[rr]^-{K_0(\pi_*)_\QQ} \ar[dd]_{\simeq}^{ \ch(-)\cdot \td_X}& & \QQ \ar@{=}[dd]\\
& \circled{4} & & \circled{5} &\\
\cZ^*(X)_\QQ/_{\!\sim_\rat} \times \cZ^*(X)_\QQ/_{\!\sim_\rat} \ar[rr]_-{- \cdot -} & & \cZ^*(X)_\QQ/_{\!\sim_\rat} \ar[rr]_-{\pi_{*}} & & \QQ.}
\end{split}
\end{align}

The square $\circled{1}$ is commutative, thanks to (\ref{equation: rephrase euler pairing}). The commutativity of the squares $\circled{2}$ and $\circled{3}$ is obvious; note that $\QQ=K_0(\Spec(k))_\QQ$. 
The square $\circled{4}$ is commutative since $\ch(-)$ is multiplicative.
Finally, the square $\circled{5}$ is commutative because of the Grothendieck-Riemann-Roch formula; see \cite[Theorem 5.26]{Huyb06}. Note that the right vertical map of the square $\circled{5}$, i.e., $\ch(-)\cdot \td_{\Spec (k)}$, is the identity map.
Indeed, since $K_0(\Spec(k))_\QQ=\QQ$ is a graded ring concentrated in degree zero, we have $\td_{\Spec (k)}=1$ and the ring map $\ch(-)$ from $K_0(\Spec(k))_\QQ$ to $\cZ^*(\Spec(k))_\QQ/_{\!\sim_\rat}=\QQ$ is the identity.
We already know that $\ch(-)\cdot \sqrt{\td_X}$ is an isomorphism and that $K_0( (-)^*)_\QQ \times id$ is an isomorphism because both $(-)^*$ and $id$ are isomorphisms. This explains diagram (\ref{diagram: num nc to c}).
Finally, recall that an algebraic cycle $\mu \in \cZ^*(X)_\QQ/_{\!\sim_\rat}$ is called numerically trivial if $\pi_*(\nu \cdot \mu)=0$ for all $\nu \in \cZ^*(X)_\QQ/_{\!\sim_\rat}$. Therefore, given an element $\cF \in K_0(\perfdg(X))_\QQ$, we conclude from the outer commutative square of diagram (\ref{diagram: num nc to c}) that $\cF$ is numerically trivial if $\ch(\cF) \cdot \sqrt{\td_X}$ is numerically trivial. This concludes the proof of Theorem \ref{theo: non commutative to classical rel}.
\end{proof}

\begin{lemma}\label{lemma: chern nc to c}
The Chern character $\ch(-): K_0(\perfdg (X) )_\QQ \stackrel{\simeq}{\longrightarrow} \cZ^*(X)_\QQ/_{ \!\sim_\rat}$ induces the following isomorphisms:
\begin{align}
\label{iso: chern nc to c}
    K_0(\perfdg(X))_\QQ/_{\!\sim_\alg} \stackrel{\simeq}{\longrightarrow} \cZ^*(X)_\QQ/_{\!\sim_\alg} \qquad \qquad K_0(\perfdg(X))_\QQ/_{\!\sim_\nil} \stackrel{\simeq}{\longrightarrow}\cZ^*(X)_\QQ/_{\!\sim_\nil}.
\end{align}
\end{lemma}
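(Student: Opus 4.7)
My plan is to transport the noncommutative definitions of $\sim_\alg$ and $\sim_\nil$ along the isomorphism (\ref{iso from nc to c}) and match them termwise with the classical definitions on $\cZ^*(X)_\QQ/_{\!\sim_\rat}$. To begin, the two maps $[\cF] \mapsto \ch(\cF)$ and $[\cF]\mapsto \ch(\cF)\cdot\sqrt{\td_X}$ differ by multiplication by the invertible element $\sqrt{\td_X} \in \cZ^*(X)_\QQ/_{\!\sim_\rat}$; since multiplication by a fixed rational class preserves the classical algebraic and nilpotence relations (both are defined only via pullbacks, external products, and $\QQ$-linear combinations), it suffices to work with the isomorphism (\ref{iso from nc to c}).

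The key geometric input is that for every smooth connected $k$-scheme $T$, the external tensor product induces a Morita equivalence $\perfdg(X)\otimes \perfdg(T) \simeq \perfdg(X\times_k T)$ (see, e.g., \cite[\S 1.6]{Tab15a}), so by Lemma \ref{lemma: k algebrico sends morita to iso} and (\ref{iso from nc to c}) one has canonical identifications
\begin{align*}
K_0(\perfdg(X)\otimes \perfdg(T))_\QQ \;\simeq\; K_0(\perfdg(X\times T))_\QQ \;\simeq\; \cZ^*(X\times T)_\QQ/_{\!\sim_\rat}.
\end{align*}
Under these identifications, the dg functor $id_{\perfdg(X)}\otimes p^*$ corresponds to the pullback $(id_X\times p)^*$ of cycle classes, because the Chern character is functorial under pullback and $(id_X\times p)^*\sqrt{\td_{X\times T}} = \sqrt{\td_X}$ (the Todd class of $\{p\}$ being trivial). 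Consequently, the image of the homomorphism (\ref{alg rel}) for $\cA = \perfdg(X)$ corresponds bijectively to the $\QQ$-subspace of $\cZ^*(X)_\QQ/_{\!\sim_\rat}$ spanned by differences $(id_X\times p)^*Z - (id_X\times q)^*Z$ for all triples $(T,p,q)$, which is precisely the subspace of algebraically trivial cycles. This yields the first isomorphism in (\ref{iso: chern nc to c}).

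For the nilpotence relation, the same Morita-equivalence principle gives $\perfdg(X)^{\otimes m} \simeq \perfdg(X^m)$, and the pairing (\ref{nil rel}) $\{[M_i]\} \mapsto [\otimes_i M_i]$ corresponds under (\ref{iso from nc to c}) to the external product $(\alpha_1,\ldots,\alpha_m)\mapsto \alpha_1\boxtimes\cdots\boxtimes\alpha_m$ on rational Chow rings; this follows from the multiplicativity of the Chern character together with the Künneth decomposition $\sqrt{\td_{X^m}} = \sqrt{\td_X}\boxtimes\cdots\boxtimes\sqrt{\td_X}$. Since the classical nilpotence relation on $\cZ^*(X)_\QQ/_{\!\sim_\rat}$ declares $\alpha$ nilpotently trivial precisely when $\alpha^{\boxtimes m}=0$ in $\cZ^*(X^m)_\QQ/_{\!\sim_\rat}$ for some $m\geq 1$, the second isomorphism in (\ref{iso: chern nc to c}) follows at once. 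The only non-trivial ingredients are the Morita equivalence $\perfdg(X)\otimes \perfdg(T) \simeq \perfdg(X\times T)$ and the compatibility of the Chern character with pullbacks and external products; both are standard, so there is no serious obstacle beyond bookkeeping.
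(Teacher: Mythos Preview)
Your argument is correct and follows essentially the same route as the paper: the Morita equivalence $\perfdg(X)\otimes\perfdg(T)\simeq\perfdg(X\times T)$ together with functoriality and multiplicativity of the Chern character yield the two commutative squares that match the noncommutative definitions (\ref{alg rel}) and (\ref{nil rel}) with their classical counterparts.

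The one organizational difference worth noting is that your first paragraph is an unnecessary detour. The lemma is stated for $\ch(-)$ alone, and the paper proves it for $\ch(-)$ directly, using only that $\ch$ commutes with pullbacks and external products; no Todd class enters. You instead reduce to the map $\ch(-)\cdot\sqrt{\td_X}$ of (\ref{iso from nc to c}) and then have to invoke the identities $(id_X\times p)^*\sqrt{\td_{X\times T}}=\sqrt{\td_X}$ and $\sqrt{\td_{X^m}}=\sqrt{\td_X}^{\boxtimes m}$. Those identities are precisely the content of the paper's next lemma (Lemma~\ref{lemma: todd nc to c}), so in effect you have folded both lemmas into one proof. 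Nothing is wrong, but you can streamline by dropping the reduction to (\ref{iso from nc to c}) and the Todd-class bookkeeping; the bare Chern character already does the job.
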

\begin{proof}
From \cite[Lemma 4.26]{TabVB18} we have that, for $T$ a smooth connected $k$-scheme, the following dg functor is a Morita equivalence:
\begin{align*}
    - \boxtimes- : \perfdg (X) \otimes \perfdg (T) \longrightarrow \perfdg(X \times T).
\end{align*}
Moreover, by the properties of Chern character (see \cite[page 282, (ii)]{Fulton2nd_ed}), one obtains the commutative diagram:

\begin{align}
\label{diagram: chern I}
\begin{split}
\xymatrix@C=10pc @R=2.5pc{
\bigoplus_{(T,p,q)}K_0(\perfdg (X) \otimes \perfdg(T))_\QQ  \ar[d]_{\bigoplus K_0(-\boxtimes-)_\QQ}^{\simeq} \ar[r]^-{\bigoplus (K_0(id \otimes p^*)_\QQ - K_0(id \otimes q^*)_\QQ)} & K_0(\perfdg(X))_\QQ \ar[dd]_\simeq^{\ch(-)}\\
\bigoplus_{(T,p,q)}K_0(\perfdg(X \times T))_\QQ \ar[d]_{\bigoplus\ch(-)}^{\simeq} & \\
\bigoplus_{(T,p,q)}\cZ^*(X \times T)_\QQ/_{\!\sim_\rat} \ar[r]_-{\bigoplus((id \times p)^*-(id \times q)^*)}    & \cZ^*(X)_\QQ/_{\!\sim_\rat},}
\end{split}
\end{align}
\noindent where $p$ and $q$ stand for $k$-rational points of a smooth connected $k$-scheme $T$. Recall that an algebraic cycle $\mu \in \cZ^*(X)_\QQ/_{\!\sim_\rat}$ is called algebraically trivial if it belongs to the image of $\bigoplus_{(T,p,q)} ((id \otimes p)^* - (id \otimes q)^*)$. Therefore, the left-hand-side isomorphism in (\ref{iso: chern nc to c}) follows from the above commutative diagram (\ref{diagram: chern I}).

In order to prove the right-hand-side isomorphism in (\ref{iso: chern nc to c}), note that a repeated use of \cite[Lemma 4.26]{TabVB18} shows that the following dg functor
\begin{align*}
    - \boxtimes - \cdots - \boxtimes - : \perfdg(X)^{\otimes m} \longrightarrow \perfdg(X^{\otimes m})
\end{align*}
is a Morita equivalence. Therefore, for any integer $m\geq 1$ we obtain the following commutative diagram:

\begin{align}
\label{diagram: chern II}
\begin{split}
\xymatrix@C=11pc @R=2.25pc{
\prod_{i=1}^m K_0(\perfdg(X))_\QQ \ar[r]^-{([\cF_1],\ldots, [\cF_m]) \longmapsto [\cF_1 \otimes \ldots \otimes \cF_m]} \ar[dd]_{\prod \ch(-)}^{\simeq}& K_0(\perfdg (X)^{\otimes m})_\QQ \ar[d]_{\simeq}^{K_0(- \boxtimes - \ldots - \boxtimes -)_\QQ}\\
 & K_0(\perfdg(X^{\times m}))_\QQ \ar[d]_{\simeq}^{\ch(-)}\\
 \prod_{i=1}^m \cZ^*(X)_\QQ/_{\!\sim_\rat} \ar[r]_-{(\mu_1,\ldots, \mu_m) \longmapsto \mu_1 \times \ldots \times \mu_m}  & \cZ^*(X^{\times m})/_{\!\sim_\rat}.
 }
\end{split}
\end{align}

Recall that an algebraic cycle $\mu \in \cZ^*(X)_\QQ/_{\!\sim_\rat}$ is called nilpotently trivial if there is a positive integer $m$ such that $\mu^{m}$ is equal to $0$. Therefore, the right-hand-side isomorphism in (\ref{iso: chern nc to c}) follows from the above commutative diagram (\ref{diagram: chern II}).
\end{proof}

\begin{lemma}
\label{lemma: todd nc to c}
The homomorphism $- \cdot \sqrt{\td_{X}}: \cZ^*(X)_\QQ/_{ \!\sim_\rat} \stackrel{\simeq}{\longrightarrow} \cZ^*(X)_\QQ/_{ \!\sim_\rat}$ induces the following isomorphisms:
\begin{align}
\label{iso: todd nc to c}
    \cZ^*(X)_\QQ/_{\!\sim_\alg} \stackrel{\simeq}{\longrightarrow} \cZ^*(X)_\QQ/_{\!\sim_\alg} \qquad \qquad \cZ^*(X)_\QQ/_{\!\sim_\nil} \stackrel{\simeq}{\longrightarrow} \cZ^*(X)_\QQ/_{\!\sim_\nil}.
\end{align} 
\end{lemma}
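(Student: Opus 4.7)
The plan is to observe that multiplication by $\sqrt{\td_X}$ is an automorphism of $\cZ^*(X)_\QQ/_{\!\sim_\rat}$ and that, more generally, multiplication by any \emph{fixed} element of $\cZ^*(X)_\QQ/_{\!\sim_\rat}$ preserves both the algebraic and the nilpotence equivalence relations; the lemma follows at once.

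First I would recall that $\sqrt{\td_X}$ makes sense in $\cZ^*(X)_\QQ/_{\!\sim_\rat}$: the Todd class is of the form $\td_X = 1 + \tau$ with $\tau$ of strictly positive codimension, so the binomial series $\sqrt{1+\tau} = 1 + \tfrac{1}{2}\tau - \tfrac{1}{8}\tau^2 + \cdots$ is a finite (polynomial) expression by nilpotency in high codimension. The same argument provides $(\sqrt{\td_X})^{-1}\in \cZ^*(X)_\QQ/_{\!\sim_\rat}$, so $-\cdot\sqrt{\td_X}$ is an automorphism of $\cZ^*(X)_\QQ/_{\!\sim_\rat}$.

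Next I would prove the key claim: for any fixed $\lambda\in \cZ^*(X)_\QQ/_{\!\sim_\rat}$, the map $\mu\mapsto \mu\cdot \lambda$ preserves $\sim_\alg$ and $\sim_\nil$. For the algebraic relation, suppose $\mu\sim_\alg 0$, so that $\mu = (\mathrm{id}_X\times p)^*(Z) - (\mathrm{id}_X\times q)^*(Z)$ for some $Z\in \cZ^*(X\times T)_\QQ/_{\!\sim_\rat}$ and two $k$-rational points $p,q$ of a smooth connected $k$-scheme $T$. Letting $\pi: X\times T\to X$ be the projection and using $\pi\circ(\mathrm{id}_X\times p)=\mathrm{id}_X$, the projection formula gives
\begin{align*}
\mu\cdot\lambda = (\mathrm{id}_X\times p)^*(Z\cdot \pi^*\lambda) - (\mathrm{id}_X\times q)^*(Z\cdot\pi^*\lambda),
\end{align*}
which is algebraically trivial. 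For the nilpotence relation, if $\mu^{\times m} = 0$ in $\cZ^*(X^{\times m})_\QQ/_{\!\sim_\rat}$, then
\begin{align*}
(\mu\cdot\lambda)^{\times m} = \mu^{\times m}\cdot (\lambda\times\cdots\times\lambda) = 0,
\end{align*}
so $\mu\cdot\lambda\sim_\nil 0$ as well.

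Finally I would conclude: applying the above to $\lambda=\sqrt{\td_X}$ shows that $-\cdot\sqrt{\td_X}$ descends to well-defined homomorphisms as in (\ref{iso: todd nc to c}); applying it to $\lambda=(\sqrt{\td_X})^{-1}$ produces a two-sided inverse on each quotient, whence both maps are isomorphisms. The only mildly subtle point is that multiplication by a fixed class preserves $\sim_\alg$ (needing the projection-formula identity above); the nilpotence case is immediate from multiplicativity of the exterior product, and invertibility on rational Chow groups is formal.
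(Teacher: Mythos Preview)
Your proof is correct and follows essentially the same strategy as the paper: show that multiplication by $\sqrt{\td_X}$ (and its inverse) preserves the subspaces of algebraically and nilpotently trivial classes, via the multiplicativity of pullbacks and of the exterior product. There is one mild presentational difference worth noting. For $\sim_\alg$ you prove the cleaner and more general fact that multiplication by \emph{any} fixed class $\lambda$ preserves algebraic triviality, by replacing the parameter cycle $Z$ with $Z\cdot\pi_X^*\lambda$; the paper instead builds a commutative square whose left vertical arrow is multiplication by $\sqrt{\td_{X\times T}}$ on $\cZ^*(X\times T)_\QQ/_{\!\sim_\rat}$, and then uses the product formula $\sqrt{\td_{X\times T}}=\pi_X^*(\sqrt{\td_X})\cdot\pi_T^*(\sqrt{\td_T})$ together with $p^*(\sqrt{\td_T})=1$ to check commutativity. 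Your route avoids this detour through the Todd class of the product and immediately gives the inverse map by taking $\lambda=(\sqrt{\td_X})^{-1}$, whereas the paper argues invertibility separately; the paper's square, on the other hand, meshes more directly with the analogous diagram in the preceding lemma on the Chern character. For $\sim_\nil$ the two arguments are identical in substance.
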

\begin{proof}
Note that in order to prove the left-hand-side of (\ref{iso: todd nc to c}) it suffices to show that the following square is commutative:

\begin{align}
\label{diagram: mult todd alg}
\begin{split}
\xymatrix@C=6.5pc @R=4pc{
\bigoplus_{(T,p,q)}\cZ^*(X \times T)_\QQ/_{\!\sim_\rat} \ar[d]_-{\bigoplus (-\cdot \sqrt{\td_{X \times T}})}^-{\simeq} \ar[r]^-{\bigoplus((id \times p)^*-(id \times q)^*)} & \cZ^*(X)_\QQ/_{\!\sim_\rat}    \ar[d]^-{-\cdot \sqrt{\td_X}}_-{\simeq}\\
\bigoplus_{(T,p,q)}\cZ^*(X \times T)_\QQ/_{\!\sim_\rat}  \ar[r]_-{\bigoplus((id \times p)^*-(id \times q)^*)}  & \cZ^*(X)_\QQ/_{\!\sim_\rat},}
\end{split}
\end{align}

\noindent where $p$ and $q$ stand for $k$-rational points of a smooth connected $k$-scheme $T$. Let $\pi_X$ and $\pi_T$ be, respectively, the projections from $X \times T$ to $X$ and $T$. Consider the pull-backs $\pi_X^*: \cZ^*(X)_\QQ/_{\!\sim_\rat} \rightarrow \cZ^*(X \times T)_\QQ/_{\!\sim_\rat}$ and $\pi_T^*: \cZ^*(T)_\QQ/_{\!\sim_\rat} \rightarrow \cZ^*(X \times T)_\QQ/_{\!\sim_\rat}$.

It is known, by the proof of \cite[Lemma 10.6]{Huyb06}, that $\sqrt{\td_{X \times T}}=\pi_X^*(\sqrt{\td_X})\cdot \pi_T^*(\sqrt{\td_T})$. Consequently, we conclude that
\begin{align*}
    (id \times p)^*(\sqrt{\td_{X \times T}})&=(id \times p)^*(\pi_X^*(\sqrt{\td_X})\cdot \pi_T^*(\sqrt{\td_T}))\\
    &=(id \times p)^*(\pi_X^*(\sqrt{\td_X})) \cdot (id \times p)^*(\pi_T^*(\sqrt{\td_T}))\\
    &= (\pi_X \circ (id \times p))^*(\sqrt{\td_X}) \cdot (\pi_T \circ (id \times p))^*(\sqrt{\td_T})\\
    &= id^*(\sqrt{\td_X}) \cdot (\pi_T \circ (id \times p))^*(\sqrt{\td_T})\\
    &=\sqrt{\td_X} \cdot (\pi_T \circ (id \times p))^*(\sqrt{\td_T}).
\end{align*}

Recall from \cite[\S 5.2]{Huyb06} that the degree zero term of $\td_T$, in $\mathrm{H}^0(T;\QQ)$, is $1$. Therefore, choose for $\sqrt{\td_T}$ the cohomology class whose degree zero term is 1. Since $\cZ^*(\Spec (k))_\QQ/_{\!\sim_\rat}=\QQ$ and $p^*$ is a graded ring morphism, the following equalities hold for every $\nu \in \cZ^*(X \times T)_\QQ/_{\!\sim_\rat}$.
\begin{align*}
    (id \times p)^*(\nu \cdot \sqrt{\td_{X \times T}})&= (id \times p)^*(\nu) \cdot (id \times p)^*(\sqrt{\td_{X \times T}})\\
    &=(id \times p)^*(\nu) \cdot \sqrt{\td_X} \cdot (\pi_T \circ (id \times p))^*(\sqrt{\td_T})\\
    &= (id \times p)^*(\nu) \cdot \sqrt{\td_X} \cdot (p \circ \pi_{\Spec(k)})^*(\sqrt{\td_T}) \\
    &= (id \times p)^*(\nu) \cdot \sqrt{\td_X} \cdot (\pi_{\Spec(k)}^* \circ p^*)(\sqrt{\td_T})\\
    &= (id \times p)^*(\nu) \cdot \sqrt{\td_X} \cdot \pi_{\Spec(k)}^*(1) \\
    &= (id \times p)^*(\nu) \cdot \sqrt{\td_X}.
\end{align*}

\noindent This implies that the above diagram (\ref{diagram: mult todd alg}) is commutative.

Now, in order to prove the right-hand-side of (\ref{iso: todd nc to c}), note that the homomorphism $\{\mu_i\}_{1\leq i \leq m} \longmapsto \mu_1 \times \ldots \times \mu_m$ can be written as the following composition:
\begin{align*}
    \prod_{i=1}^m \cZ^*(X)_\QQ /_{\!\sim_\rat} \xrightarrow{\prod \pi_i^*} \prod_{i=1}^m \cZ^*(X^{\times m})_\QQ/_{\!\sim_\rat} \xrightarrow{\text{mult}} \cZ^*(X^{\times m})_\QQ/_{\!\sim_\rat},
\end{align*}
where $\pi_i$ stands for the $i^{th}$ projection map from $X^{\times m}$ to $X$. Consequently, since the pull-back homomorphism $\pi_i^*$ is multiplicative, if an algebraic cycle $\mu \in \cZ^*(X)_\QQ/_{ \!\sim_\rat}$ is nilpotently trivial, then so is $\mu \cdot \sqrt{\td_{X}}$. Conversely, since the homomorphism $\pi_i^*$ is multiplicative and $\sqrt{\td_{X}}$ is invertible, if $\mu \cdot \sqrt{\td_{X}}$ is nilpotently trivial, then $\mu$ is also necessarily nilpotently trivial.
\end{proof}

\section{Kernels}

In this section, we consider the quotients between the different equivalence relations introduced in \S \ref{equiv rel}. We start by fixing some useful notations that will be used in the remainder of the paper.

\begin{notation}
\renewcommand{\labelenumi}{(\roman{enumi})}
Let $X$ be a smooth proper $k$-scheme and $\cA$ a smooth proper dg category (over $k$).
\begin{enumerate}
    \item Given equivalence relations $\sim_1$ and $\sim_2$ on a set $S$, we will write $\sim_1 \succeq \sim_2$ if, for all $a,$ $b \in S$, $a \sim_1 b$ implies $a \sim_2 b$.
    \item Given an equivalence relation $\sim$ on $\cZ^*(X)_\QQ$, resp. on $K_0(\cA)_\QQ$, let us write 
    \begin{align*}
    \cZ^*_{\sim}(X)_\QQ:=\{\alpha \in \cZ^*(X)_\QQ | \alpha \sim 0 \} \text{ resp. } K_{0,\sim}(\cA)_\QQ:=\{\alpha \in K_0(\cA)_\QQ | \alpha \sim 0 \}
    \end{align*}
    for the $\QQ$-subspace of $\sim$-trivial elements.
    \item Given equivalence relations $\sim_1 \, \succeq \, \sim_2$ on $\cZ^*(X)_\QQ$, resp. on $K_0(\cA)_\QQ$, let us write
    \begin{align*}
    \cZ^*_{\sim_2/\sim_1}(X)_\QQ:=\dfrac{\cZ^*_{\sim_2}(X)_\QQ}{\cZ^*_{\sim_1}(X)_\QQ} \text{ resp. } K_{0,\sim_2/\sim_1}(\cA)_\QQ:=\dfrac{K_{0,\sim_2}(\cA)_\QQ}{K_{0,\sim_1}(\cA)_\QQ}
    \end{align*} for the associated quotient.
    \item Given equivalence relations $\sim_1 \, \succeq \, \sim_2$ on $\cZ^*(X)_\QQ$, resp. on $K_0(\cA)_\QQ$, let us write
    \begin{align*}
    q^{X}_{\sim_2/\sim_1}: \cZ^*(X)_\QQ/_{\!\sim_1} \twoheadrightarrow \cZ^*(X)_\QQ/_{\!\sim_2} \text{ resp. } q^{\cA,\nc}_{\sim_2/\sim_1}: K_0(\cA)_\QQ/_{\!\sim_1} \twoheadrightarrow K_0(\cA)_\QQ/_{\!\sim_2}
    \end{align*}
    for the quotient map.
    \item Given equivalence relations $\sim_1 \, \succeq \, \sim_2$ on $K_0(\perfdg(X))_\QQ$, let $q^{X,\nc}_{\sim_2/\sim_1}: K_0(\perfdg(X))_\QQ/_{\!\sim_1} \twoheadrightarrow K_0(\perfdg(X))_\QQ/_{\!\sim_2}$ be the quotient map.
\end{enumerate}
\end{notation}  

\begin{remark}
\label{remark: trivial leq 2}
\renewcommand{\labelenumi}{(\roman{enumi})}
\begin{enumerate}
    \item The equivalence relations $\sim_\alg$, $\sim_\nil$ and $\sim_\num$ agree for smooth proper $k$-schemes $X$ of $\dim \leq 2$; see \cite[\S 3.2.7]{YAnd04} and \cite[\S 19.3.5]{Fulton2nd_ed}. Moreover, $\sim_\rat$, $\sim_\alg$, $\sim_\nil$ and $\sim_\num$ agree for $0$-dimensional $k$-schemes.
    \item In general, we have $\ker(q^{X}_{\sim_2/\sim_1}) \neq 0$; consult Remark \ref{remark: not always true}.
    \item For every smooth projective complex curve $X$ of positive genus $g$, we have that $\cZ^*_{\sim_\alg/\sim_\rat}(X)_\QQ \simeq (\QQ/\ZZ)^{\oplus 2 g}$. Indeed, as $X$ is a curve, we have $\cZ^*_{\sim_\alg/\sim_\rat}(X)_\QQ = \cZ^1_{\sim_\alg/\sim_\rat}(X)_\QQ$ which, thanks to \cite[19.3.5]{Fulton2nd_ed}, is isomorphic to the $\QQ$-vector space of torsion points of the Albanese variety $\text{Alb}(X)$.
    Since $X$ is a curve, we have $\text{Alb}(X) \simeq \text{Jac}(X)$ (the Jacobian variety of X) and it is  well-known that the $\QQ$-vector space of torsion points of $\text{Jac}(X)$ is isomorphic to $(\QQ/\ZZ)^{\oplus 2 g}$.
    \item Voevodsky's nilpotence conjecture asserts that $\ker(q^{X}_{\sim_\num/\sim_\nil})=0$.
\end{enumerate}
\end{remark}

\begin{remark}
\label{remark: kernel=quocient}
Let $\sim_1, \sim_2 \in \{\sim_\rat, \sim_\alg, \sim_\nil, \sim_\num\}$.
\renewcommand{\labelenumi}{(\roman{enumi})}
\begin{enumerate}
\item Given a smooth proper $k$-scheme $X$, we have $\cZ^*_{\sim_2/\sim_1}(X)_\QQ\simeq\ker(q^X_{\sim_2/\sim_1})$.
\item Given a smooth proper dg category $\cA$ (over $k$), we have $K_{0,\sim_2/\sim_1}(\cA)_\QQ \simeq \ker(q^{\cA,\nc}_{\sim_2/\sim_1})$.
\end{enumerate}
\end{remark}

\begin{corollary}[of Theorem \ref{theo: non commutative to classical rel}]
\label{corollary: nc and c are =}
Given a smooth proper $k$-scheme $X$ and $\sim_1, \sim_2 \in \{\sim_\rat, \sim_\alg, \sim_\nil, \sim_\num\}$, we have an isomorphism $\ker(q^{X}_{\sim_2/\sim_1}) \simeq \ker(q^{X,\nc}_{\sim_2/\sim_1})$. Equivalently, we have an isomorphism $\cZ^*_{\sim_2/\sim_1}(X)_\QQ \simeq K_{0,\sim_2/\sim_1}(\perfdg(X))_\QQ$.
\end{corollary}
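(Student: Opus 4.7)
The approach is a direct application of Theorem \ref{theo: non commutative to classical rel} followed by a routine diagram chase. Without loss of generality I assume $\sim_1 \, \succeq \, \sim_2$, so that both quotient maps $q^{X,\nc}_{\sim_2/\sim_1}$ and $q^{X}_{\sim_2/\sim_1}$ are defined.

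First, I would combine the isomorphism (\ref{iso from nc to c}) (in the case $\sim\,=\,\sim_\rat$) with the three isomorphisms provided by Theorem \ref{theo: non commutative to classical rel} (for $\sim\,=\,\sim_\alg,\sim_\nil,\sim_\num$) to conclude that, for every $\sim \in \{\sim_\rat, \sim_\alg, \sim_\nil, \sim_\num\}$, the Chern--Todd map $[\cF] \longmapsto \ch(\cF) \cdot \sqrt{\td_X}$ descends to a $\QQ$-linear isomorphism
\[
\varphi_\sim: K_0(\perfdg(X))_\QQ/_{\!\sim} \stackrel{\simeq}{\longrightarrow} \cZ^*(X)_\QQ/_{\!\sim}.
\]
Because the four maps $\varphi_\sim$ are all induced by one and the same formula on $K_0(\perfdg(X))_\QQ$, they assemble into a commutative square whose horizontal arrows are the isomorphisms $\varphi_{\sim_1}$ and $\varphi_{\sim_2}$ and whose vertical arrows are, respectively, the canonical projections $q^{X,\nc}_{\sim_2/\sim_1}$ and $q^{X}_{\sim_2/\sim_1}$.

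From this commutative square the isomorphism $\ker(q^{X,\nc}_{\sim_2/\sim_1}) \simeq \ker(q^{X}_{\sim_2/\sim_1})$ follows at once (either by a short diagram chase, or by simply restricting $\varphi_{\sim_1}$ to kernels, using that $\varphi_{\sim_2}$ is an isomorphism so the kernel of $q^{X,\nc}_{\sim_2/\sim_1}$ maps bijectively onto the kernel of $q^{X}_{\sim_2/\sim_1}$). The equivalent reformulation in terms of $\cZ^*_{\sim_2/\sim_1}(X)_\QQ$ and $K_{0,\sim_2/\sim_1}(\perfdg(X))_\QQ$ is then immediate from Remark \ref{remark: kernel=quocient}. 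I do not anticipate any real obstacle: the substantive content has already been packaged into Theorem \ref{theo: non commutative to classical rel}, and this corollary amounts to transporting that content across a commutative square of quotient maps.
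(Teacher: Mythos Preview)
Your proposal is correct and follows essentially the same approach as the paper: invoke Theorem \ref{theo: non commutative to classical rel} (together with (\ref{iso from nc to c}) for $\sim_\rat$) to obtain compatible isomorphisms at the $\sim_1$- and $\sim_2$-levels, assemble them into a commutative square with the quotient maps as vertical arrows, and read off the isomorphism on kernels. The paper draws exactly this diagram (\ref{diagram: kernels}) and concludes in the same way; your explicit appeal to Remark \ref{remark: kernel=quocient} for the second formulation is a harmless clarification.
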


\begin{proof}
Recall from Theorem \ref{theo: non commutative to classical rel} that the map (\ref{iso from nc to c}) induces an isomorphism $K_0(\perfdg(X))_\QQ/_{\!\sim} \xrightarrow[]{\simeq} \cZ^*(X)_{\QQ}/_{\!\sim}$, where

\noindent $\sim \in \{\sim_\rat, \sim_\alg, \sim_\nil, \sim_\num\}$.
Consequently, we have the following commutative diagram:

\begin{align}
\label{diagram: kernels}
\begin{split}\xymatrix@C=3pc @R=3pc{
\ker(q^{X,\nc}_{\sim_2/\sim_1}) \ar[r] \ar[d]& K_0(\perfdg(X))_\QQ/_{\!\sim_1} \ar@{->>}[r] \ar[d]^-{(\ref{iso from nc to c})}_-{\simeq}& K_0(\perfdg(X))_\QQ/_{\!\sim_2} \ar[d]^-{(\ref{iso from nc to c})}_-{\simeq}\\
\ker(q^{X}_{\sim_2/\sim_1}) \ar[r] & \cZ^*(X)_{\QQ}/_{\!\sim_1} \ar@{->>}[r] & \cZ^*(X)_{\QQ}/_{\!\sim_2}.}
\end{split}
\end{align}

\noindent Since both rows in (\ref{diagram: kernels}) are exact, we hence conclude that the left-hand-side vertical homomorphism in (\ref{diagram: kernels}) is also invertible.
\end{proof}

\begin{proposition}
\label{proposition: kernel nc factors}
Let $\cA$ be a smooth proper dg category such that $\mathrm{H}^0(\cA)=\langle \cfb, \cfc \rangle$ admits a semi-orthogonal decomposition in the sense of Bondal-Orlov \cite{BondalOrlov02}. Let $\cfb^{\dg}$ and $\cfc^{\dg}$ denote, respectively, the dg enhancement of $\cfb$ and $\cfc$ induced from $\cA$.
Under these assumptions, the inclusions of $\cfb^{\dg}$ and $\cfc^{\dg}$ into $\cA$ induce an isomorphism
\begin{align*}
\ker(q^{\cfb^{\dg},\nc}_{\sim_2/\sim_1}) \oplus \ker(q^{\cfc^{\dg},\nc}_{\sim_2/\sim_1}) \stackrel{\simeq}{\longrightarrow} \ker(q^{\cA,\nc}_{\sim_2/\sim_1}\!) \text{ with } \sim_1,\sim_2 \in \{\sim_\rat, \sim_\alg, \sim_\nil, \sim_\num \}.   
\end{align*}
Equivalently, we have an induced isomorphism $K_{0,\sim_2/\sim_1}(\cfb^{\dg})_\QQ \oplus K_{0,\sim_2/\sim_1}(\cfc^{\dg})_\QQ \xrightarrow{\simeq} K_{0,\sim_2/\sim_1}(\cA)_\QQ$.
\end{proposition}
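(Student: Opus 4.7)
The plan is to deduce the proposition from a decomposition statement at the level of the quotients $K_0(-)_\QQ/_{\!\sim}$ for each of the four equivalence relations individually, and then to conclude by a diagram chase applied to the tautological short exact sequence defining $K_{0,\sim_2/\sim_1}(-)_\QQ$.

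First I would establish, for every single $\sim \in \{\sim_\rat, \sim_\alg, \sim_\nil, \sim_\num\}$, the ``semi-orthogonal additivity'' isomorphism
$$K_0(\cfb^{\dg})_\QQ/_{\!\sim} \oplus K_0(\cfc^{\dg})_\QQ/_{\!\sim} \stackrel{\simeq}{\longrightarrow} K_0(\cA)_\QQ/_{\!\sim},$$
induced by the inclusions. For $\sim_\alg$ this is exactly Lemma \ref{lemma: K algebrico 2nd prop additive}. For $\sim_\rat$ (where the relation is trivial on $K_0$), the argument of \cite[Proposition 2.2]{Tab15a} recalled inside the proof of Lemma \ref{lemma: K algebrico 2nd prop additive} already shows that the inclusion $T(\cfb^{\dg},\cfc^{\dg};\prescript{}{id}{\cA})\hookrightarrow \cA$ is a Morita equivalence, so additivity of $K_0(-)_\QQ$ as an additive invariant yields the claim. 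For $\sim_\nil$ and $\sim_\num$, I would observe that the very same Morita equivalence, combined with (\ref{Hom Nchow}), yields a direct sum decomposition $U(\cA)_\QQ\simeq U(\cfb^{\dg})_\QQ\oplus U(\cfc^{\dg})_\QQ$ in $\NChow(k)_\QQ$; this decomposition descends to the symmetric monoidal quotients $\mathrm{NVoev}(k)_\QQ$ and $\mathrm{NNum}(k)_\QQ$, and applying $\Hom(U(k)_\QQ,-)$ in each of them, together with the reformulations used in Remark \ref{remark: nil nchow} and in the second half of the proof of Theorem \ref{ordering}, delivers the required additivity.

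Next I would assemble these four isomorphisms. For fixed $\sim_1\succeq\sim_2$, each of $\cA$, $\cfb^{\dg}$ and $\cfc^{\dg}$ fits into the tautological short exact sequence
$$0\longrightarrow K_{0,\sim_2/\sim_1}(-)_\QQ \longrightarrow K_0(-)_\QQ/_{\!\sim_1} \xrightarrow{\;q^{\nc}_{\sim_2/\sim_1}\;} K_0(-)_\QQ/_{\!\sim_2}\longrightarrow 0,$$
and the inclusions $\cfb^{\dg},\cfc^{\dg}\hookrightarrow \cA$ produce a commutative ladder from the direct sum of the two smaller versions to the $\cA$ version. By the previous paragraph, the middle and right-hand vertical maps of this ladder are isomorphisms, so the five lemma forces the left-hand vertical map $K_{0,\sim_2/\sim_1}(\cfb^{\dg})_\QQ\oplus K_{0,\sim_2/\sim_1}(\cfc^{\dg})_\QQ\to K_{0,\sim_2/\sim_1}(\cA)_\QQ$ to be an isomorphism as well. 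Remark \ref{remark: kernel=quocient}(ii) then translates this into the stated formulation involving $\ker(q^{\cfb^{\dg},\nc}_{\sim_2/\sim_1})$, $\ker(q^{\cfc^{\dg},\nc}_{\sim_2/\sim_1})$ and $\ker(q^{\cA,\nc}_{\sim_2/\sim_1})$.

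The only genuine subtlety I anticipate is the semi-orthogonal additivity for $\sim_\nil$ and $\sim_\num$, since neither relation is literally the value of an additive invariant on $\cA$; the detour through $\NChow(k)_\QQ$ and its symmetric monoidal quotients $\mathrm{NVoev}(k)_\QQ$ and $\mathrm{NNum}(k)_\QQ$ is precisely what reduces the problem to the formal fact that $U(\cA)_\QQ$ splits under semi-orthogonal decompositions, making the rest of the argument a routine diagram chase.
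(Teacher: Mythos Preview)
Your proposal is correct and follows essentially the same route as the paper: establish the additivity isomorphism $K_0(\cfb^{\dg})_\QQ/_{\!\sim}\oplus K_0(\cfc^{\dg})_\QQ/_{\!\sim}\simeq K_0(\cA)_\QQ/_{\!\sim}$ for each of the four relations (via Lemma~\ref{lemma: K algebrico 2nd prop additive} for $\sim_\alg$, additivity of $K_0$ for $\sim_\rat$, and the splitting $U(\cA)_\QQ\simeq U(\cfb^{\dg})_\QQ\oplus U(\cfc^{\dg})_\QQ$ in $\NChow(k)_\QQ$ pushed to $\mathrm{NVoev}(k)_\QQ$ and $\mathrm{NNum}(k)_\QQ$ for the other two), and then conclude by the exact-row diagram chase. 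The only cosmetic difference is that the paper cites \cite[\S3.2]{Tab18a} directly for the $\Hom$-computations in $\mathrm{NVoev}(k)_\QQ$ and $\mathrm{NNum}(k)_\QQ$, rather than routing through Remark~\ref{remark: nil nchow} and the proof of Theorem~\ref{ordering}, and it records explicitly that $\cfb^{\dg}$ and $\cfc^{\dg}$ are themselves smooth and proper.
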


\begin{proof}
Since the dg category $\cA$ is smooth and proper, it follows from the proof of \cite[Lemma 2.1]{BMT} that the dg categories $\cfb^{\dg}$ and $\cfc^{\dg}$ are also smooth and proper.
Moreover, as explained in \cite[\S 3.2]{Tab18a}, we have the following computations:
\begin{align}
\label{Hom Nvoev Nnum}
\Hom_{\text{NVoev}(k)_\QQ}(U(k)_\QQ, U(\cA)_\QQ) \simeq K_0(\cA)_\QQ/_{\!\sim_\nil} \qquad  \Hom_{\text{NNum}(k)_\QQ}(U(k)_\QQ, U(\cA)_\QQ) \simeq K_0(\cA)_\QQ/_{\!\sim_\num}.
\end{align}

\noindent As $\mathrm{H}^0(\cA)= \langle \cfb, \cfc \rangle$, we deduce from \cite[Proposition 2.2 and Theorem 2.9]{Tab15a} that the induced morphism 
$U(\cfb^{\dg})_\QQ \oplus U(\cfc^{\dg})_\QQ \rightarrow U(\cA)_\QQ$ is invertible.
Therefore, making use of the computations (\ref{Hom Nvoev Nnum}), we obtain induced isomorphisms:
\begin{align*}
K_0(\cfb^{\dg})_\QQ/_{\!\sim_\nil} \oplus  K_0(\cfc^{\dg})_\QQ/_{\!\sim_\nil} \stackrel{\simeq}{\longrightarrow} K_0(\cA)_\QQ/_{\!\sim_\nil} \qquad
K_0(\cfb^{\dg})_\QQ/_{\!\sim_\num} \oplus  K_0(\cfc^{\dg})_\QQ/_{\!\sim_\num} \stackrel{\simeq}{\longrightarrow} K_0(\cA)_\QQ/_{\!\sim_\num}.
\end{align*}
Thanks to the computation (\ref{Hom Nchow}), we have an induced isomorphism $K_0(\cfb^{\dg})_\QQ \oplus  K_0(\cfc^{\dg})_\QQ \xrightarrow[]{\simeq} K_0(\cA)_\QQ$.
Note that thanks to Lemma \ref{lemma: K algebrico 2nd prop additive}, the inclusions of $\cfb^{\dg}$ and $\cfc^{\dg}$ into $\cA$ also induce an isomorphism $K_0(\cfb^{\dg})_\QQ/_{\!\sim_\alg} \oplus  K_0(\cfc^{\dg})_\QQ/_{\!\sim_\alg} \xrightarrow[]{\simeq} K_0(\cA)_\QQ/_{\!\sim_\alg}$. Consequently, we have the following commutative diagram:

\begin{align}
\label{useful diagram decomposition}
\begin{split}
\xymatrix@C=3pc @R=4pc{
\ker(q^{\cfb^{\dg},\nc}_{\sim_2/\sim_1}) \oplus  \ker(q^{\cfc^{\dg},\nc}_{\sim_2/\sim_1}) \ar[r] \ar[d] & K_0(\cfb^{\dg})_\QQ/_{\!\sim_1} \oplus  K_0(\cfc^{\dg})_\QQ/_{\!\sim_1} \ar@{->>}[r] \ar[d]^{\simeq} & K_0(\cfb^{\dg})_\QQ/_{\!\sim_2} \oplus  K_0(\cfc^{\dg})_\QQ/_{\!\sim_2} \ar[d]^{\simeq}\\
\ker(q^{\cA,\nc}_{\sim_2/\sim_1}) \ar[r] & K_0(\cA)_\QQ/_{\!\sim_1} \ar@{->>}[r] & K_0(\cA)_\QQ/_{\!\sim_2}.}
\end{split}    
\end{align}

Since both rows in (\ref{useful diagram decomposition}) are exact, we conclude that the left-hand-side vertical homomorphism in (\ref{useful diagram decomposition}) is also invertible.
\end{proof}

\begin{proposition}
\label{proposition: morita implies equivalent conj}
Let $\cA$ and $\cB$ be two smooth proper dg categories.
\renewcommand{\labelenumi}{(\roman{enumi})}
\begin{enumerate}
    \item Given $F: \cA \rightarrow \cB$ a Morita equivalence, the homomorphism $K_0(F)_\QQ/_{\!\sim}$ is invertible when $\sim \in \{\sim_\rat,\sim_\alg, \sim_\nil, \sim_\num\}$.
    \item For $\sim_1, \sim_2 \in \{\sim_\rat, \sim_\alg, \sim_\nil, \sim_\num\}$ and $\cA$ Morita equivalent to $\cB$, we have $\ker(q^{\cA,\nc}_{\sim_2/\sim_1}) \simeq \ker(q^{\cB,\nc}_{\sim_2/\sim_1})$, or, equivalently, $K_{0,\sim_2/\sim_1}(\cA)_\QQ \simeq K_{0,\sim_2/\sim_1}(\cB)_\QQ$.
\end{enumerate}
\end{proposition}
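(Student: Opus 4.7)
The plan is to prove part (i) relation-by-relation, and then deduce part (ii) by a short diagram chase; the argument is essentially formal, since all the required ingredients have either been set up earlier in this section or recorded in \S\ref{subsection: nc motives}.

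For part (i), the four cases are handled as follows. The rational case is immediate: $K_0(-)_\QQ$ is an additive invariant (see \S\ref{subsection: nc motives}), so it sends Morita equivalences to isomorphisms, and under (\ref{Hom Nchow}) this gives that $K_0(F)_\QQ/_{\!\sim_\rat}$ is invertible. The algebraic case is precisely the content of Lemma \ref{lemma: k algebrico sends morita to iso}. For the nilpotence and numerical cases, I would invoke the identifications from (\ref{Hom Nvoev Nnum}),
\begin{align*}
K_0(\cA)_\QQ/_{\!\sim_\nil} &\simeq \Hom_{\mathrm{NVoev}(k)_\QQ}(U(k)_\QQ, U(\cA)_\QQ), \\
K_0(\cA)_\QQ/_{\!\sim_\num} &\simeq \Hom_{\mathrm{NNum}(k)_\QQ}(U(k)_\QQ, U(\cA)_\QQ),
\end{align*}
together with the fact that the functor $U(-)_\QQ$ factors through $\mathrm{NVoev}(k)_\QQ$ and $\mathrm{NNum}(k)_\QQ$ and sends Morita equivalences to isomorphisms therein (this is built into the construction recalled in \cite[\S 4.4 \& \S 4.6]{Tab15a}). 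Hence $U(F)_\QQ$ is invertible in both categories, and post-composition with $U(F)_\QQ$ yields isomorphisms on the Hom-groups above; under the identifications (\ref{Hom Nvoev Nnum}) these correspond exactly to $K_0(F)_\QQ/_{\!\sim_\nil}$ and $K_0(F)_\QQ/_{\!\sim_\num}$.

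For part (ii), I assemble the four isomorphisms obtained in (i) into the commutative diagram with exact rows
\[
\xymatrix@C=3pc @R=3pc{
\ker(q^{\cA,\nc}_{\sim_2/\sim_1}) \ar[r] \ar[d] & K_0(\cA)_\QQ/_{\!\sim_1} \ar@{->>}[r] \ar[d]^-{\simeq} & K_0(\cA)_\QQ/_{\!\sim_2} \ar[d]^-{\simeq} \\
\ker(q^{\cB,\nc}_{\sim_2/\sim_1}) \ar[r] & K_0(\cB)_\QQ/_{\!\sim_1} \ar@{->>}[r] & K_0(\cB)_\QQ/_{\!\sim_2},
}
\]
whose middle and right vertical maps are invertible. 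A routine diagram chase (or the five lemma) then forces the induced left vertical map to be an isomorphism as well, giving $\ker(q^{\cA,\nc}_{\sim_2/\sim_1}) \simeq \ker(q^{\cB,\nc}_{\sim_2/\sim_1})$; the equivalent formulation $K_{0,\sim_2/\sim_1}(\cA)_\QQ \simeq K_{0,\sim_2/\sim_1}(\cB)_\QQ$ then follows from Remark \ref{remark: kernel=quocient}(ii). There is no serious obstacle in this argument; the only subtlety worth double-checking is that the identifications (\ref{Hom Nchow}) and (\ref{Hom Nvoev Nnum}) are natural in $\cA$, so that the map induced by $F$ on Hom-spaces matches $K_0(F)_\QQ$ on the $K$-theory side. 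This naturality is built into the very construction of the categories $\NChow(k)_\QQ$, $\mathrm{NVoev}(k)_\QQ$ and $\mathrm{NNum}(k)_\QQ$.
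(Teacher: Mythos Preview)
Your proposal is correct and follows essentially the same approach as the paper: both handle part (i) case-by-case via additivity of $K_0(-)_\QQ$, Lemma \ref{lemma: k algebrico sends morita to iso}, and the Hom-descriptions in $\mathrm{NVoev}(k)_\QQ$ and $\mathrm{NNum}(k)_\QQ$ combined with the fact that $U(-)_\QQ$ sends Morita equivalences to isomorphisms (the paper cites \cite[Proposition 2.2 and Theorem 2.9]{Tab15a} for this last point). Your treatment of part (ii) is more explicit than the paper's one-line ``item (ii) follows from item (i)'', but the content is the same.
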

\begin{proof}
We start by proving item (i). Since $K_0(-)_\QQ$ is an additive invariant, we have that $K_0(F)_\QQ$ is an isomorphism.
Thanks to Lemma \ref{lemma: k algebrico sends morita to iso}, $K_0(F)_\QQ/_{\!\sim_\alg}$ is an isomorphism.
Following \cite[Proposition 2.2 and Theorem 2.9]{Tab15a}, we have that $U(-)$ sends Morita equivalences to isomorphisms. This together with the following computations (see \cite[\S 3.2]{Tab18a})
\begin{align*}
\Hom_{\text{NVoev}(k)_\QQ}(U(k)_\QQ, U(-)_\QQ) \simeq K_0(-)_\QQ/_{\!\sim_\nil} \qquad \Hom_{\text{NNum}(k)_\QQ}(U(k)_\QQ, U(-)_\QQ) \simeq K_0(-)_\QQ/_{\!\sim_\num},
\end{align*}
is enough to conclude that both $K_0(F)_\QQ/_{\!\sim_\nil}$, and $K_0(F)_\QQ/_{\!\sim_\num}$ are isomorphisms.

Now, note that item (ii) follows from item (i). \end{proof}

\begin{proposition}
\label{proposition: azumaya imples equivalent conj}
Let $X$ be a smooth proper $k$-scheme and $\BB_0$ an Azumaya algebra over $X$.
\renewcommand{\labelenumi}{(\roman{enumi})}
\begin{enumerate}
    \item The canonical dg functor $i_{X,\BB_0}:\perfdg(X) \rightarrow \perfdg(X,\BB_0)$ induces an isomorphism 
    \begin{align*}
    K_0(i_{X,\BB_0})_\QQ/_{\!\sim}: K_0(\perfdg(X))_\QQ/_{\!\sim} \longrightarrow K_0(\perfdg(X,\BB_0))_\QQ/_{\!\sim} \text{ with } \sim \in \{\sim_\rat,\sim_\alg, \sim_\nil, \sim_\num\}.
    \end{align*}
    \item For $\sim_1, \sim_2 \in \{\sim_\rat, \sim_\alg, \sim_\nil, \sim_\num\}$, we have $\ker(q^{X,\nc}_{\sim_2/\sim_1}) \simeq \ker(q^{\perfdg(X,\BB_0),\nc}_{\sim_2/\sim_1})$, or, equivalently, an isomorphism
\begin{align*}
K_{0,\sim_2/\sim_1}(\perfdg(X))_\QQ \simeq K_{0,\sim_2/\sim_1}(\perfdg(X,\BB_0))_\QQ.   
\end{align*}    

\end{enumerate}
\end{proposition}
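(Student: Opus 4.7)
The plan is to proceed in close analogy with the proof of Proposition \ref{proposition: morita implies equivalent conj}, replacing the generic Morita equivalence of that statement with the canonical dg functor $i_{X,\BB_0}$. The crucial additional input is \cite[Propositions 8.3 and 8.17]{TabVB15}, already invoked in the proof of Lemma \ref{lemma: azumaya algebras}, which asserts that $U(i_{X,\BB_0})_\QQ$ is invertible in the category $\NChow(k)_\QQ$ of noncommutative Chow motives.

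For item (i), I would treat the four equivalence relations one at a time. The case $\sim\,=\,\sim_\alg$ is precisely Lemma \ref{lemma: azumaya algebras}. For $\sim\,=\,\sim_\rat$, the Hom identification $\Hom_{\NChow(k)_\QQ}(U(k)_\QQ, U(-)_\QQ) \simeq K_0(-)_\QQ$ recalled in \S\ref{subsection: nc motives} upgrades the invertibility of $U(i_{X,\BB_0})_\QQ$ in $\NChow(k)_\QQ$ to the invertibility of $K_0(i_{X,\BB_0})_\QQ$ itself. For $\sim\,=\,\sim_\nil$ and $\sim\,=\,\sim_\num$, I would post-compose with the canonical symmetric monoidal functors $\NChow(k)_\QQ \to \text{NVoev}(k)_\QQ$ and $\NChow(k)_\QQ \to \text{NNum}(k)_\QQ$ and invoke the analogous computations
\begin{align*}
\Hom_{\text{NVoev}(k)_\QQ}(U(k)_\QQ, U(-)_\QQ) \simeq K_0(-)_\QQ/_{\!\sim_\nil}, \qquad \Hom_{\text{NNum}(k)_\QQ}(U(k)_\QQ, U(-)_\QQ) \simeq K_0(-)_\QQ/_{\!\sim_\num}
\end{align*}
used in the proof of Proposition \ref{proposition: morita implies equivalent conj}; applying these Hom functors to $U(i_{X,\BB_0})_\QQ$ then produces the two remaining isomorphisms.

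For item (ii), the argument is the same categorical diagram chase that concluded the proof of Proposition \ref{proposition: morita implies equivalent conj}(ii). Namely, item (i) yields a commutative diagram
\begin{align*}
\xymatrix@C=2pc @R=3pc{
\ker(q^{X,\nc}_{\sim_2/\sim_1}) \ar[r] \ar[d] & K_0(\perfdg(X))_\QQ/_{\!\sim_1} \ar@{->>}[r] \ar[d]^{\simeq} & K_0(\perfdg(X))_\QQ/_{\!\sim_2} \ar[d]^{\simeq}\\
\ker(q^{\perfdg(X,\BB_0),\nc}_{\sim_2/\sim_1}) \ar[r] & K_0(\perfdg(X,\BB_0))_\QQ/_{\!\sim_1} \ar@{->>}[r] & K_0(\perfdg(X,\BB_0))_\QQ/_{\!\sim_2}
}
\end{align*}
with exact rows whose two right-most vertical arrows are invertible, and a routine diagram chase then forces the induced homomorphism between the kernels to be an isomorphism as well.

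I do not foresee any genuine obstacle: the only non-formal ingredient is the cited invertibility of $U(i_{X,\BB_0})_\QQ$ in $\NChow(k)_\QQ$ from \cite[Propositions 8.3 and 8.17]{TabVB15}, after which the proof is reduced to the tracking of additive and symmetric monoidal functoriality already systematized in the preceding lemmas and propositions.
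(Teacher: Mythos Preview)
Your proposal is correct and follows essentially the same strategy as the paper: Lemma \ref{lemma: azumaya algebras} for $\sim_\alg$, the invertibility of $U(i_{X,\BB_0})_\QQ$ in $\NChow(k)_\QQ$ together with the Hom-computations in $\NChow(k)_\QQ$, $\text{NVoev}(k)_\QQ$, and $\text{NNum}(k)_\QQ$ for the remaining three relations, and then the same exact-rows diagram chase for item (ii). The only cosmetic difference is that the paper cites \cite[Corollary~3.1]{TabVB15} directly for the $\sim_\rat$ case rather than deducing it from the $\NChow$ Hom-description as you do, but both arguments amount to the same thing.
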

\begin{proof}
To prove item (i), we start by noting that, following \cite[Corollary 3.1]{TabVB15}, we have $K_0(\perfdg(X))_\QQ \simeq K_0(\perfdg(X,\BB_0))_\QQ$.
Moreover, thanks to Lemma \ref{lemma: azumaya algebras}, we have that $ K_0(\perfdg(X))_\QQ/_{\!\sim_\alg} \simeq K_0(\perfdg(X,\BB_0))_\QQ/_{\!\sim_\alg}$.
Following \cite[Propositions 8.3 and 8.7]{TabVB15}, the canonical dg functor $i_{X,\BB_0}$ induces an isomorphism $U(\perfdg(X))_\QQ \simeq U(\perfdg(X,\BB_0))_\QQ$. So, the same reasoning in Proposition \ref{proposition: morita implies equivalent conj} allows us to conclude that
\begin{align*}
K_0(\perfdg(X))_\QQ/_{\!\sim_\nil} \simeq K_0(\perfdg(X,\BB_0))_\QQ/_{\!\sim_\nil} \qquad K_0(\perfdg(X))_\QQ/_{\!\sim_\num} \simeq K_0(\perfdg(X,\BB_0))_\QQ/_{\!\sim_\num}.
\end{align*}
This finishes the proof of item (i). 
To prove item (ii), note that item (i) yields the following commutative diagram:
\begin{align}
\label{useful diagram azumaya}
\begin{split}
\xymatrix@C=3pc @R=4pc{
\ker(q^{X,\nc}_{\sim_2/\sim_1}) \ar[r] \ar[d] & K_0(\perfdg(X))_\QQ/_{\!\sim_1} \ar@{->>}[r] \ar[d]^{\simeq} & K_0(\perfdg(X))_\QQ/_{\!\sim_2} \ar[d]^{\simeq}\\
\ker(q^{\perfdg(X,\BB_0),\nc}_{\sim_2/\sim_1}) \ar[r] & K_0(\perfdg(X,\BB_0))_\QQ/_{\!\sim_1} \ar@{->>}[r] & K_0(\perfdg(X,\BB_0))_\QQ/_{\!\sim_2}.}
\end{split}
\end{align}
\noindent Since both rows in (\ref{useful diagram azumaya}) are exact, the left-hand-side vertical homomorphism in (\ref{useful diagram azumaya}) is also invertible.
\end{proof}

\section{Applications}

In this section, making use of the mathematical tools developed in \S 3-\S 4, we prove Theorem \ref{main theo} (as well as Theorem \ref{Theorem: HPD}). The proofs are, in most cases, adaptations of the ones given in \cite{BMT} and \cite{OP}.

\subsection{Full exceptional collection}
\begin{corollary}[of Proposition \ref{proposition: kernel nc factors}]
\label{corollary: full}
Let $X$ be a smooth proper $k$-scheme such that $\perf(X)$ admits a full exceptional collection, cf. \cite[\S1.1]{kuzn14}. The equivalence relations $\sim_\alg$ and $\sim_\num$ agree on $\cZ^*(X)_\QQ$.
\end{corollary}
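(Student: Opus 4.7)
The plan is to leverage the semi-orthogonal decomposition induced by a full exceptional collection, together with Proposition~\ref{proposition: kernel nc factors}, to reduce the statement to the trivial case of $\Spec(k)$.

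First, I would recall that a full exceptional collection $\langle E_1, \ldots, E_n \rangle$ on $\perf(X)$ produces a semi-orthogonal decomposition $\mathrm{H}^0(\perfdg(X)) = \langle \hend(E_1), \ldots, \hend(E_n)\rangle$, and since each $E_i$ is exceptional we have $\hend(E_i) \simeq k$. Hence each piece of the decomposition is Morita equivalent to $\perfdg(\Spec(k))$. I would then apply Proposition~\ref{proposition: kernel nc factors} inductively (splitting off one term at a time from the semi-orthogonal decomposition) to obtain an isomorphism
\begin{align*}
\ker\bigl(q^{X,\nc}_{\sim_\num/\sim_\alg}\bigr) \;\simeq\; \bigoplus_{i=1}^{n} \ker\bigl(q^{\hend(E_i)^{\dg},\nc}_{\sim_\num/\sim_\alg}\bigr).
\end{align*}

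Next, for each $i$, the Morita equivalence $\hend(E_i)^{\dg} \simeq \perfdg(\Spec(k))$ combined with Proposition~\ref{proposition: morita implies equivalent conj}(ii) yields
\begin{align*}
\ker\bigl(q^{\hend(E_i)^{\dg},\nc}_{\sim_\num/\sim_\alg}\bigr) \;\simeq\; \ker\bigl(q^{\Spec(k),\nc}_{\sim_\num/\sim_\alg}\bigr),
\end{align*}
and by Corollary~\ref{corollary: nc and c are =} the right-hand side identifies with $\cZ^*_{\sim_\num/\sim_\alg}(\Spec(k))_\QQ$, which vanishes because $\Spec(k)$ is $0$-dimensional (see Remark~\ref{remark: trivial leq 2}(i)). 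Chaining these isomorphisms gives $\ker(q^{X,\nc}_{\sim_\num/\sim_\alg}) = 0$.

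Finally, another application of Corollary~\ref{corollary: nc and c are =} translates this vanishing to the commutative side: $\cZ^*_{\sim_\num/\sim_\alg}(X)_\QQ \simeq \ker(q^{X,\nc}_{\sim_\num/\sim_\alg}) = 0$, which by Remark~\ref{remark: kernel=quocient}(i) means exactly that $\sim_\alg$ and $\sim_\num$ coincide on $\cZ^*(X)_\QQ$. The only conceptual point requiring care is checking that Proposition~\ref{proposition: kernel nc factors} can indeed be iterated along a length-$n$ semi-orthogonal decomposition; this is routine since at each step the complementary subcategory is again smooth proper and inherits its own semi-orthogonal decomposition, so a straightforward induction on $n$ (with base case $n=1$ being the Morita case handled by Proposition~\ref{proposition: morita implies equivalent conj}) closes the argument. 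I do not expect a serious obstacle here.
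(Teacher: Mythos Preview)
Your proposal is correct and follows essentially the same route as the paper's proof, which is the one-line ``Just consider Remark~\ref{remark: trivial leq 2}(i), Corollary~\ref{corollary: nc and c are =} and Proposition~\ref{proposition: kernel nc factors}.'' You have simply unpacked this: the inductive use of Proposition~\ref{proposition: kernel nc factors}, the identification of each exceptional piece with $\Spec(k)$ (where you invoke Proposition~\ref{proposition: morita implies equivalent conj}(ii) explicitly, while the paper leaves this implicit), and the passage to the commutative side via Corollary~\ref{corollary: nc and c are =} are exactly what the paper intends.
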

\begin{proof}
Just consider Remark \ref{remark: trivial leq 2}(i), Corollary \ref{corollary: nc and c are =} and Proposition \ref{proposition: kernel nc factors}.
\end{proof}

\subsection{Severi-Brauer varieties}
\label{subsection: severi-brauer}

\begin{theorem}
\label{theorem: severi-brauer}
Let $X$ be a Severi-Brauer variety. The equivalence relations $\sim_\alg$ and $\sim_\num$ agree on $\cZ^*(X)_\QQ$.
\end{theorem}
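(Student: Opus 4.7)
The plan is to reduce the statement to the zero-dimensional case by exploiting the well-known semi-orthogonal decomposition of $\perf(X)$ for a Severi-Brauer variety, together with the additivity/invariance results already established in \S 4.

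More precisely, let $X$ be a Severi-Brauer variety of dimension $n$ over $k$, associated to a central simple $k$-algebra $A$. By Bernardara's theorem, one has a semi-orthogonal decomposition
\begin{align*}
\perf(X) = \langle \perf(k), \perf(k, A), \perf(k, A^{\otimes 2}), \ldots, \perf(k, A^{\otimes n})\rangle,
\end{align*}
where each $A^{\otimes i}$ is viewed as an Azumaya algebra over $\Spec(k)$. Lifting this to dg enhancements induced from $\perfdg(X)$, the corresponding pieces are (Morita equivalent to) $\perfdg(\Spec(k), A^{\otimes i})$ for $i = 0, 1, \ldots, n$.

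The next step is to apply Proposition \ref{proposition: kernel nc factors} iteratively (using the trivial fact that a semi-orthogonal decomposition with $n+1$ components can be built up by successively splitting off one component at a time) with $\sim_1 = \sim_\alg$ and $\sim_2 = \sim_\num$. This yields an isomorphism
\begin{align*}
\bigoplus_{i=0}^{n} \ker\bigl(q^{\perfdg(\Spec(k), A^{\otimes i}), \nc}_{\sim_\num/\sim_\alg}\bigr) \stackrel{\simeq}{\longrightarrow} \ker\bigl(q^{\perfdg(X), \nc}_{\sim_\num/\sim_\alg}\bigr).
\end{align*}
By Proposition \ref{proposition: azumaya imples equivalent conj}(ii), each summand is isomorphic to $\ker(q^{\Spec(k), \nc}_{\sim_\num/\sim_\alg})$, which via Corollary \ref{corollary: nc and c are =} is identified with $\cZ^*_{\sim_\num/\sim_\alg}(\Spec(k))_\QQ$. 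Since $\Spec(k)$ is $0$-dimensional, Remark \ref{remark: trivial leq 2}(i) forces this group to vanish.

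Consequently, $\ker(q^{\perfdg(X), \nc}_{\sim_\num/\sim_\alg}) = 0$, and invoking Corollary \ref{corollary: nc and c are =} once more gives $\ker(q^{X}_{\sim_\num/\sim_\alg}) = 0$, i.e., the equivalence relations $\sim_\alg$ and $\sim_\num$ agree on $\cZ^*(X)_\QQ$. The principal input is Bernardara's semi-orthogonal decomposition; the potential obstacle is purely bookkeeping, namely ensuring that the decomposition lifts to a compatible decomposition of $\perfdg(X)$ so that Proposition \ref{proposition: kernel nc factors} genuinely applies (this is standard, since the semi-orthogonal components are admissible and their dg enhancements are induced from $\perfdg(X)$). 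Once that is in place, everything else is a direct application of the tools developed in \S 4.
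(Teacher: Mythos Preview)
Your proposal is correct and follows essentially the same route as the paper: both use Bernardara's semi-orthogonal decomposition $\perf(X)=\langle \perf(k),\perf(A),\ldots,\perf(A^{\otimes n})\rangle$, then combine Proposition \ref{proposition: kernel nc factors}, Proposition \ref{proposition: azumaya imples equivalent conj}(ii), Corollary \ref{corollary: nc and c are =}, and Remark \ref{remark: trivial leq 2}(i) to reduce to the $0$-dimensional case. The only difference is cosmetic---you spell out the iteration and the passage through $\Spec(k)$ more explicitly than the paper does.
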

\begin{proof}
Let $A$ be the unique central simple $k$-algebra such that $X=\mathrm{SB}(A)$. Following \cite[Theorem 4.1]{Ber09}, we have the semi-orthogonal decomposition:
\begin{align*}
    \perf(X)= \langle \perf(k), \perf(A), \ldots, \perf(A^{\otimes n})\rangle,
    \end{align*}
where $n$ stands for the dimension of $X$. Since the tensor product of central simple $k$-algebra is still a central simple $k$-algebra, Remark \ref{remark: trivial leq 2}(i), Corollary \ref{corollary: nc and c are =} and Propositions \ref{proposition: kernel nc factors}, \ref{proposition: azumaya imples equivalent conj}(ii) together imply that the equivalence relations $\sim_\alg$ and $\sim_\num$ agree on $\cZ^*(X)_\QQ$.
\end{proof}

\subsection{Quadric fibrations}
\label{subsection: quadric}
Take $S$ a smooth projective $k$-scheme and $q: Q \rightarrow S$ a flat quadric fibration of relative dimension $n$ with $Q$ smooth.
Let $C_0$ be the sheaf of even parts of the Clifford algebra associated to $q$; see \cite[\S 1]{ABB14} \cite[\S 3]{Kuzn08}.
Recall from \cite[\S 3.5-\S 3.6]{Kuzn08} that when the discriminant divisor of $q$ is smooth and $n$ is even (resp., odd) we have a discriminant double cover $\widetilde{S} \rightarrow S$ (resp., a square root stack $\widehat{S}$) equipped with an Azumaya algebra $\BB_0$.

\begin{theorem}
Under the above assumptions, with $\sim_1, \sim_2 \in \{\sim_\rat, \sim_\alg, \sim_\nil, \sim_\num\}$, the following holds:
\renewcommand{\labelenumi}{(\roman{enumi})}
\begin{enumerate}
 \item We have $\cZ^*_{\sim_2/\sim_1}(Q)_\QQ\simeq K_{0,\sim_2/\sim_1}(\perfdg(S, C_0))_\QQ \bigoplus \cZ^*_{\sim_2/\sim_1}(S)^{\oplus n}_\QQ$.
 \item If the discriminant divisor of $q$ is smooth and $n$ is even, then $\cZ^*_{\sim_2/\sim_1}(Q)_\QQ \simeq  \cZ^*_{\sim_2/\sim_1}(\widetilde{S})_\QQ \bigoplus \cZ^*_{\sim_2/\sim_1}(S)^{\oplus n}_\QQ$.
 \item If the discriminant divisor of $q$ is smooth and $n$ is odd, then $\cZ^*_{\sim_2/\sim_1}(Q)_\QQ \simeq K_{0,\sim_2/\sim_1}(\perfdg(\widehat{S}, \BB_0))_\QQ \bigoplus \cZ^*_{\sim_2/\sim_1}(S)^{\oplus n}_\QQ$. 
\end{enumerate}
\label{theorem: quadric fibrations}
\end{theorem}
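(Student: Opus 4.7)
The plan is to reduce Theorem \ref{theorem: quadric fibrations} to a semi-orthogonal decomposition of $\perfdg(Q)$ and then run the kernel-of-quotient machinery of \S 4. The starting point will be Kuznetsov's decomposition for smooth flat quadric fibrations \cite{Kuzn08}: if $q \colon Q \to S$ has relative dimension $n$, then
\begin{align*}
\perf(Q) = \langle \perf(S, C_0),\, \perf(S)_1,\, \ldots,\, \perf(S)_n \rangle,
\end{align*}
where each $\perf(S)_i$ is a twist of $\perf(S)$ by an invertible object. This decomposition lifts canonically to a semi-orthogonal decomposition of the dg enhancement $\perfdg(Q)$ whose pieces are the smooth proper dg categories $\perfdg(S, C_0)$ and $n$ copies of $\perfdg(S)$.

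For item (i), I will iterate Proposition \ref{proposition: kernel nc factors}. Although that proposition is stated for two-piece decompositions, an $(n+1)$-piece decomposition $\langle \cfb_1, \ldots, \cfb_{n+1} \rangle$ groups as $\langle \cfb_1, \langle \cfb_2, \ldots, \cfb_{n+1} \rangle \rangle$, with the right-hand admissible subcategory again smooth and proper (by \cite[Proposition 4.9]{KuznLunts15}). Induction on $n$ will then yield
\begin{align*}
K_{0, \sim_2/\sim_1}(\perfdg(Q))_\QQ \simeq K_{0, \sim_2/\sim_1}(\perfdg(S, C_0))_\QQ \oplus K_{0, \sim_2/\sim_1}(\perfdg(S))_\QQ^{\oplus n}.
\end{align*}
Applying Corollary \ref{corollary: nc and c are =} to both $Q$ and $S$ will convert the nc $K_0$ quotients for the smooth proper schemes into the classical cycle quotients on both sides, which gives (i).

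For (ii) and (iii), I will invoke the structure of $\perfdg(S, C_0)$ recalled in \cite{Kuzn08}. When the discriminant divisor of $q$ is smooth and $n$ is even, there is a Morita equivalence $\perfdg(S, C_0) \simeq \perfdg(\widetilde{S}, \BB_0)$ with $\BB_0$ an Azumaya algebra on the discriminant double cover $\widetilde{S}$; when $n$ is odd, one has instead a Morita equivalence $\perfdg(S, C_0) \simeq \perfdg(\widehat{S}, \BB_0)$ over the square root stack $\widehat{S}$. Proposition \ref{proposition: morita implies equivalent conj}(ii) will identify $K_{0, \sim_2/\sim_1}(\perfdg(S, C_0))_\QQ$ with the analogous quotient on the target. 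For (iii) this already matches the claim; for (ii), a further application of Proposition \ref{proposition: azumaya imples equivalent conj}(ii) will remove $\BB_0$ from $\widetilde{S}$, and a final invocation of Corollary \ref{corollary: nc and c are =} will rewrite the result as $\cZ^*_{\sim_2/\sim_1}(\widetilde{S})_\QQ$. Substituting these identifications into (i) then gives (ii) and (iii).

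The only genuine obstacle I foresee is the iteration of Proposition \ref{proposition: kernel nc factors} across a long semi-orthogonal decomposition: at each step of the induction one must check that the relevant admissible subcategory is itself smooth and proper, which will follow from the gluing construction $T(\cA, \cB; \B)$ recalled in Section 2 together with \cite[Proposition 4.9]{KuznLunts15}, but requires some bookkeeping. Everything else is a direct application of the dictionary developed in Sections 3 and 4.
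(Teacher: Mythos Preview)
Your proposal is correct and follows essentially the same route as the paper: the paper's proof also invokes Kuznetsov's semi-orthogonal decomposition of $\perf(Q)$, applies (iterated) Proposition~\ref{proposition: kernel nc factors} together with Corollary~\ref{corollary: nc and c are =} for item~(i), and then uses the Morita equivalence with $\perfdg(\widetilde{S},\BB_0)$ (resp.\ $\perfdg(\widehat{S},\BB_0)$) plus Proposition~\ref{proposition: azumaya imples equivalent conj} for item~(ii) (resp.\ stops at the Morita step for item~(iii)). Your write-up is in fact more explicit than the paper's, which simply points to the argument in \cite{BMT} with the indicated substitutions.
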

\begin{proof}
Follow the reasoning of the proof of \cite[Theorem 1.2]{BMT} with the following changes:
to prove item (i), use Corollary \ref{corollary: nc and c are =} instead of \cite[Theorem 1.1]{BMT} and Proposition \ref{proposition: kernel nc factors} instead of \cite[equation (5.2)]{BMT} (in order to conclude that $\ker(q^{Q,\nc}_{\sim_2/\sim_1})$ is isomorphic to $\ker(q^{\perfdg(S, C_0),\nc}_{\sim_2/\sim_1}) \bigoplus \ker(q^{S, \nc}_{\sim_2/\sim_1})^{\oplus n}$);
to prove item (ii), consider Proposition \ref{proposition: azumaya imples equivalent conj} and use Corollary \ref{corollary: nc and c are =} instead of \cite[Theorem 1.1]{BMT};
finally, in order to prove (iii), it is enough to replace the reference \cite[Theorem 1.1]{BMT} by Corollary \ref{corollary: nc and c are =}.
\end{proof}

\begin{corollary}
\label{corollary: certain quadrics}
When $\dim(S)\leq 2$, the following holds:
\renewcommand{\labelenumi}{(\roman{enumi})}
\begin{enumerate}
\item If the discriminant divisor of $q$ is smooth and $n$ is even, then the equivalence relations $\sim_\alg$ and $\sim_\num$ on $\cZ^*(Q)_\QQ$ agree.
 \item If the discriminant divisor of $q$ is smooth and $n$ is odd, then $\cZ^*_{\sim_\num/\sim_\alg}(Q)_\QQ \simeq K_{0,\sim_\num/\sim_\alg}(\perfdg(\widehat{S}, \BB_0))_\QQ$. 
\end{enumerate}
\end{corollary}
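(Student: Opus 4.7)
The plan is to apply Theorem \ref{theorem: quadric fibrations}(ii) and (iii) with the specific choice $\sim_1=\sim_\alg$ and $\sim_2=\sim_\num$, and then to kill off the ``commutative'' summands on the right-hand side by invoking the low-dimensionality hypothesis $\dim(S)\leq 2$ via Remark \ref{remark: trivial leq 2}(i).

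For part (i), Theorem \ref{theorem: quadric fibrations}(ii) gives the decomposition
\begin{align*}
\cZ^*_{\sim_\num/\sim_\alg}(Q)_\QQ \simeq \cZ^*_{\sim_\num/\sim_\alg}(\widetilde{S})_\QQ \oplus \cZ^*_{\sim_\num/\sim_\alg}(S)_\QQ^{\oplus n}.
\end{align*}
By hypothesis $\dim(S)\leq 2$, so Remark \ref{remark: trivial leq 2}(i) forces $\cZ^*_{\sim_\num/\sim_\alg}(S)_\QQ=0$. For the remaining summand I would observe that the discriminant double cover $\widetilde{S}\to S$ is finite, so $\dim(\widetilde{S})=\dim(S)\leq 2$ as well, and applying Remark \ref{remark: trivial leq 2}(i) once more yields $\cZ^*_{\sim_\num/\sim_\alg}(\widetilde{S})_\QQ=0$. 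Combining gives $\cZ^*_{\sim_\num/\sim_\alg}(Q)_\QQ=0$, i.e.\ $\sim_\alg$ and $\sim_\num$ agree on $\cZ^*(Q)_\QQ$.

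For part (ii), I would apply Theorem \ref{theorem: quadric fibrations}(iii), which produces
\begin{align*}
\cZ^*_{\sim_\num/\sim_\alg}(Q)_\QQ \simeq K_{0,\sim_\num/\sim_\alg}(\perfdg(\widehat{S},\BB_0))_\QQ \oplus \cZ^*_{\sim_\num/\sim_\alg}(S)_\QQ^{\oplus n}.
\end{align*}
Again the commutative summand $\cZ^*_{\sim_\num/\sim_\alg}(S)_\QQ^{\oplus n}$ vanishes by Remark \ref{remark: trivial leq 2}(i), leaving the asserted isomorphism. Unlike (i), one cannot simplify the noncommutative summand $K_{0,\sim_\num/\sim_\alg}(\perfdg(\widehat{S},\BB_0))_\QQ$ further: the root stack $\widehat{S}$ is not a scheme, so there is no direct dimension-counting analogue of Remark \ref{remark: trivial leq 2}(i) to apply here.

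The proof is essentially bookkeeping once Theorem \ref{theorem: quadric fibrations} is in hand; the only point requiring a moment's attention is that in part (i) the summand corresponding to $\widetilde{S}$ also dies, and this rests on the (trivial but essential) observation that a finite cover of a surface is a surface. No significant obstacle is anticipated.
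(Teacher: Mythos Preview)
Your proposal is correct and follows exactly the paper's approach: apply Theorem \ref{theorem: quadric fibrations}(ii)--(iii) with $\sim_1=\sim_\alg$, $\sim_2=\sim_\num$, and then use Remark \ref{remark: trivial leq 2}(i) together with $\dim(\widetilde{S})=\dim(S)$ to annihilate the commutative summands.
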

\begin{proof}
Since $\dim(\widetilde{S})=\dim (S)$, the proof follows from Remark \ref{remark: trivial leq 2}(i). 
\end{proof}

\subsection{Intersection of quadrics}

Let $X$ be a smooth complete intersection of $r$ quadric hypersurfaces in $\PP^m$. The linear span of these $r$ quadrics gives rise to a hypersurface
$Q \subseteq \PP^{r-1} \times \PP^m$,
and the projection into the first factor to a flat quadric fibration $q:Q \rightarrow \PP^{r-1}$ of relative dimension $m-1$.
\begin{theorem}
\label{theorem: intersection of quadrics}
Under the above assumptions, with $\sim_1, \sim_2 \in \{\sim_\rat, \sim_\alg, \sim_\nil, \sim_\num\}$, the following holds:
\renewcommand{\labelenumi}{(\roman{enumi})}
\begin{enumerate}
 \item We have $\cZ^*_{\sim_2/\sim_1}(X)_\QQ \simeq K_{0,\sim_2/\sim_1}(\perfdg(\PP^{r-1}, C_0))_\QQ$.
 \item If the discriminant divisor of $q$ is smooth and $m$ is odd, then $\cZ^*_{\sim_2/\sim_1}(X)_\QQ \simeq \cZ^*_{\sim_2/\sim_1}(\widetilde{\PP^{r-1}})_\QQ$.
 \item If the discriminant divisor of $q$ is smooth and $m$ is even, then $\cZ^*_{\sim_2/\sim_1}(X)_\QQ \simeq K_{0,\sim_2/\sim_1}(\perfdg(\widehat{\PP^{r-1}}, \BB_0))_\QQ$.
\end{enumerate}
\end{theorem}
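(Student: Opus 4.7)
The plan is to adapt the strategy of \cite[Theorem 1.3]{BMT} using the refined machinery from Section~4. The geometric input is Kuznetsov's Homological Projective Duality for intersections of quadrics: in the appropriate range, $\perf(X)$ admits a semi-orthogonal decomposition
\begin{align*}
\perf(X) = \langle \perf(\PP^{r-1}, C_0),\, \cO_X(1),\, \ldots,\, \cO_X(N) \rangle,
\end{align*}
where each of the $N$ line-bundle summands generates a block Morita equivalent to $\perf(k)$.

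To prove item (i), I would first apply Corollary \ref{corollary: nc and c are =} to pass from $\ker(q^{X}_{\sim_2/\sim_1})$ to $\ker(q^{X,\nc}_{\sim_2/\sim_1})$. Iteratively applying Proposition \ref{proposition: kernel nc factors} to the above semi-orthogonal decomposition then splits this kernel as
\begin{align*}
\ker(q^{X,\nc}_{\sim_2/\sim_1}) \simeq \ker(q^{\perfdg(\PP^{r-1}, C_0),\nc}_{\sim_2/\sim_1}) \oplus \bigoplus_{i=1}^{N} \ker(q^{\perfdg(k),\nc}_{\sim_2/\sim_1}).
\end{align*}
Since $\Spec(k)$ is zero-dimensional, Remark \ref{remark: trivial leq 2}(i) forces each summand $\ker(q^{\perfdg(k),\nc}_{\sim_2/\sim_1})$ to vanish, yielding item (i).

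For items (ii) and (iii), I would invoke the additional Kuznetsov results recalled in Section~\ref{subsection: quadric}: when the discriminant of $q: Q \to \PP^{r-1}$ is smooth, $\perfdg(\PP^{r-1}, C_0)$ becomes Morita equivalent to an Azumaya-algebra category either over the discriminant double cover $\widetilde{\PP^{r-1}}$ (when the fiber dimension $n = m-1$ is even, i.e.\ $m$ odd) or over the square root stack $\widehat{\PP^{r-1}}$ (when $n$ is odd, i.e.\ $m$ even). Propositions \ref{proposition: morita implies equivalent conj}(ii) and \ref{proposition: azumaya imples equivalent conj}(ii), together with Corollary \ref{corollary: nc and c are =}, then identify $\ker(q^{\perfdg(\PP^{r-1}, C_0),\nc}_{\sim_2/\sim_1})$ with $\ker(q^{\widetilde{\PP^{r-1}}}_{\sim_2/\sim_1})$ in the even case and with $\ker(q^{\perfdg(\widehat{\PP^{r-1}}, \BB_0),\nc}_{\sim_2/\sim_1})$ in the odd case, converting item (i) into items (ii) and (iii).

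The main obstacle is confirming the precise range of validity of Kuznetsov's HPD decomposition; if $X$ lies outside the standard range one should dualize, so that $\perf(X)$ appears instead as a block inside the decomposition of $\perf(\PP^{r-1}, C_0)$, and the same kernel-splitting argument delivers the result. A secondary subtlety is verifying that the smooth-discriminant identifications used in (ii) and (iii) actually upgrade to Morita equivalences of dg enhancements (so that Proposition \ref{proposition: morita implies equivalent conj} applies) rather than merely equivalences of triangulated categories; this is taken care of by working directly with $\perfdg(\PP^{r-1}, C_0)$ throughout and appealing to \cite[\S 3.6]{Kuzn08}.
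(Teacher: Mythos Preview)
Your proposal is correct and follows essentially the same route as the paper: both adapt the argument of \cite[\S7]{BMT} by feeding Kuznetsov's semi-orthogonal decomposition of $\perf(X)$ through Corollary~\ref{corollary: nc and c are =} and Proposition~\ref{proposition: kernel nc factors}, and then handling items (ii)--(iii) via the Azumaya/Morita identifications exactly as in the proof of Theorem~\ref{theorem: quadric fibrations}(ii)--(iii). Your explicit discussion of the out-of-range dual case and of the dg-enhancement issue is a helpful gloss but does not depart from the paper's strategy.
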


\begin{proof}
The proof is similar to the one given in \cite[\S 7]{BMT} with the following adaptations: to prove item (i), replace the reference to the proof of \cite[Theorem 1.2(i)]{BMT} by the actual proof of Theorem \ref{theorem: quadric fibrations}(i) and consider Corollary \ref{corollary: nc and c are =} instead of \cite[Theorem 1.1]{BMT};
the proofs of items (ii)-(iii) follow a similar reasoning to the proofs of Theorem \ref{theorem: quadric fibrations}(ii)-(iii).
\end{proof}

\begin{corollary}
\label{corollary: certain intersections}
When $r\leq 3$, the following holds:
\renewcommand{\labelenumi}{(\roman{enumi})}
\begin{enumerate}
 \item If the discriminant divisor of $q$ is smooth and $m$ is odd, then the equivalence relations $\sim_\alg$ and $\sim_\num$ on $\cZ^*(X)_\QQ$ agree.
 \item If the discriminant divisor of $q$ is smooth, $m$ is even and $k$ is algebraically closed, then the equivalence relations $\sim_\alg$ and $\sim_\num$ on $\cZ^*(X)_\QQ$ agree.
\end{enumerate}
\end{corollary}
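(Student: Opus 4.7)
The plan is to apply Theorem \ref{theorem: intersection of quadrics} with $\sim_1 \,=\, \sim_\alg$ and $\sim_2 \,=\, \sim_\num$, reducing each of the two items to a dimension-$\leq 2$ statement that is then dispatched by Remark \ref{remark: trivial leq 2}(i).

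For item (i), Theorem \ref{theorem: intersection of quadrics}(ii) yields an isomorphism
\[
\cZ^*_{\sim_\num/\sim_\alg}(X)_{\QQ} \simeq \cZ^*_{\sim_\num/\sim_\alg}(\widetilde{\PP^{r-1}})_{\QQ}.
\]
Since $\widetilde{\PP^{r-1}} \to \PP^{r-1}$ is a double cover, we have $\dim \widetilde{\PP^{r-1}} = r-1 \leq 2$, so Remark \ref{remark: trivial leq 2}(i) forces the right-hand side to vanish. In view of Remark \ref{remark: kernel=quocient}(i), this vanishing is exactly the claimed agreement of $\sim_\alg$ and $\sim_\num$ on $\cZ^*(X)_{\QQ}$.

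For item (ii), Theorem \ref{theorem: intersection of quadrics}(iii) gives
\[
\cZ^*_{\sim_\num/\sim_\alg}(X)_{\QQ} \simeq K_{0,\sim_\num/\sim_\alg}(\perfdg(\widehat{\PP^{r-1}}, \BB_0))_{\QQ},
\]
so it suffices to show the right-hand side vanishes. The plan is to exploit the standard two-step semi-orthogonal decomposition of the derived category of a smooth square-root stack, which (after twisting by $\BB_0$) expresses $\perfdg(\widehat{\PP^{r-1}}, \BB_0)$ in terms of pieces Morita equivalent to $\perfdg(\PP^{r-1}, \BB_0')$ and $\perfdg(D, \BB_0'')$ for the smooth discriminant divisor $D$ and appropriate restricted Azumaya algebras $\BB_0', \BB_0''$. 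Applying Proposition \ref{proposition: kernel nc factors} splits the kernel as a direct sum of two pieces; Proposition \ref{proposition: azumaya imples equivalent conj}(ii) removes the Azumaya twists on each factor; and Corollary \ref{corollary: nc and c are =} identifies the two remaining contributions with $\cZ^*_{\sim_\num/\sim_\alg}(\PP^{r-1})_{\QQ}$ and $\cZ^*_{\sim_\num/\sim_\alg}(D)_{\QQ}$. Since $r \leq 3$, both $\PP^{r-1}$ (i.e.\ $\PP^1$ or $\PP^2$) and $D$ (a finite scheme for $r = 2$, a smooth plane curve for $r = 3$) have dimension at most two, and Remark \ref{remark: trivial leq 2}(i) makes both vanish.

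The hypothesis that $k$ is algebraically closed is used to guarantee that the discriminant divisor decomposes and behaves well enough (as a smooth low-dimensional scheme over $k$) for this reduction to apply directly. The main technical obstacle is justifying the two-piece semi-orthogonal decomposition of $\perfdg(\widehat{\PP^{r-1}}, \BB_0)$ in the form required above; this is implicit in Kuznetsov's analysis of quadric fibrations \cite{Kuzn08} used in the proof of Theorem \ref{theorem: intersection of quadrics}(iii) and played the same role in the nilpotence argument of \cite{BMT}. Once this decomposition is in hand, the remainder of the proof is a direct application of the tools already assembled in \S 3--\S 4.
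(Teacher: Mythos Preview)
The paper gives no explicit proof of this corollary, leaving it as an immediate consequence of Theorem \ref{theorem: intersection of quadrics} together with the argument in \cite[\S 7]{BMT}; your proposal correctly reconstructs that argument for both items, reducing item (ii) via the twisted root-stack semi-orthogonal decomposition to low-dimensional scheme pieces exactly as in \cite{BMT}. Your account of where the algebraically-closed hypothesis enters is somewhat imprecise --- in \cite{BMT} it is needed in handling the twisted category over the root stack rather than for the behaviour of the discriminant itself --- but you rightly flag the twisted SOD as the one external technical input, so the overall argument is sound and matches the intended approach.
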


\subsection{Moishezon manifolds}

A \textit{Moishezon manifold} $X$ is a compact complex manifold such that the field of meromorphic functions on each component of $X$ has transcendence degree equal to the dimension of the component.
As proved in \cite{Mois66}, $X$ is a smooth projective $\CC$-scheme if and only if it admits a K\"{a}hler metric. In the remaining cases, it is shown in \cite{MArtin70} that $X$ is a proper algebraic space over $\CC$.
Let $Y \rightarrow \PP^2$ be one of the non-rational conic bundles described by Artin and Mumford in \cite{AM72}, and $X \rightarrow Y$ a small resolution. In this case, $X$ is a smooth (not
necessarily projective) Moishezon manifold.

\begin{theorem}
\label{theorem: certain moishezon}
The equivalence relations $\sim_\alg$ and $\sim_\num$ on $K_0(\perfdg(X))_\QQ$ agree.
\end{theorem}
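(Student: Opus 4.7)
The plan is to adapt the proof of this theorem given in \cite{BMT} for the nilpotence conjecture, substituting our algebraic-equivalence machinery from Sections 3 and 4 for the nilpotence arguments. Since $X$ is Moishezon but not necessarily projective, the statement is correctly phrased at the level of the noncommutative invariant $K_0(\perfdg(X))_\QQ$; by Remark \ref{remark: kernel=quocient}(ii) this amounts to showing that $\ker(q^{\perfdg(X),\nc}_{\sim_\num/\sim_\alg}) = 0$.

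The first step is the geometric input. Following \cite{BMT} and the references therein, the small resolution $X \to Y$ of the Artin-Mumford conic bundle $q: Y \to \PP^2$ yields a semi-orthogonal decomposition of $\perfdg(X)$ of the form
\[
\perfdg(X) = \langle \perfdg(\PP^2, C_0),\, \perfdg(\PP^2) \rangle,
\]
where $C_0$ is the sheaf of even parts of the Clifford algebra of $q$. Since $q$ has odd relative dimension, the square-root-stack construction recalled in \S \ref{subsection: quadric} identifies $\perfdg(\PP^2, C_0)$, up to Morita equivalence, with $\perfdg(\widehat{\PP^2}, \BB_0)$ for an Azumaya algebra $\BB_0$ over a two-dimensional (stacky) base $\widehat{\PP^2}$.

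The second step is the combinatorial reduction. Apply Proposition \ref{proposition: kernel nc factors} with $\sim_1 = \sim_\alg$ and $\sim_2 = \sim_\num$ to obtain
\[
\ker(q^{\perfdg(X),\nc}_{\sim_\num/\sim_\alg}) \simeq \ker(q^{\perfdg(\PP^2, C_0),\nc}_{\sim_\num/\sim_\alg}) \oplus \ker(q^{\PP^2,\nc}_{\sim_\num/\sim_\alg}).
\]
The $\PP^2$ summand vanishes by Corollary \ref{corollary: full} (since $\PP^2$ admits a full exceptional collection), or alternatively by Corollary \ref{corollary: nc and c are =} together with Remark \ref{remark: trivial leq 2}(i). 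For the twisted-Clifford summand, Proposition \ref{proposition: morita implies equivalent conj}(ii) transfers the computation to $\perfdg(\widehat{\PP^2}, \BB_0)$, Proposition \ref{proposition: azumaya imples equivalent conj}(ii) strips the Azumaya twist, and the remaining kernel vanishes because the base has dimension $\leq 2$ (again via Remark \ref{remark: trivial leq 2}(i) and Corollary \ref{corollary: nc and c are =}).

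The main obstacle is the geometric input of the first step, i.e.\ producing and justifying the desired semi-orthogonal decomposition of $\perfdg(X)$ for the small resolution of an Artin-Mumford conic bundle, and verifying that the stacky/Azumaya piece lands in dimension $\leq 2$ so that our tools apply. This requires a careful handling of the categorical resolution of the singularities of $Y$ and of the square-root-stack formalism in the non-projective Moishezon setting. Once this geometric input is in place, the substitution of Proposition \ref{proposition: kernel nc factors} and Proposition \ref{proposition: azumaya imples equivalent conj} for their nilpotence analogues in \cite{BMT} is mechanical and the theorem follows.
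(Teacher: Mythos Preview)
Your overall strategy---adapt the \cite{BMT} argument by feeding the semi-orthogonal decomposition through Proposition~\ref{proposition: kernel nc factors} and then killing each summand via a low-dimensional base---is exactly the paper's approach. The paper's proof is terse: it instructs one to rerun the proof of \cite[Theorem~1.14]{BMT}, replacing the nilpotence ingredients by Theorem~\ref{theorem: quadric fibrations}(i), Remark~\ref{remark: trivial leq 2}(i), and Proposition~\ref{proposition: kernel nc factors}.

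There is, however, a genuine gap in your handling of the Clifford piece. You invoke the square-root-stack identification $\perfdg(\PP^2,C_0)\simeq\perfdg(\widehat{\PP^2},\BB_0)$ from \S\ref{subsection: quadric}, but that construction (and hence Theorem~\ref{theorem: quadric fibrations}(iii)) requires the discriminant divisor of $q$ to be \emph{smooth}. For the Artin--Mumford conic bundles the discriminant is a \emph{nodal} plane curve---this singularity is precisely what produces the non-trivial Brauer class and the non-rationality. So the passage to $(\widehat{\PP^2},\BB_0)$ is not available, and even if it were, note that Corollary~\ref{corollary: certain quadrics}(ii) only \emph{reduces} to the stacky piece rather than annihilating it (Remark~\ref{remark: trivial leq 2}(i) is stated for schemes). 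This is why the paper points to part~(i), not part~(iii), of Theorem~\ref{theorem: quadric fibrations}.

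What actually happens in \cite{BMT} for these specific Moishezon threefolds is a finer analysis of $\perfdg(\PP^2,C_0)$ tailored to the Artin--Mumford geometry, which lands the Clifford summand in a setting where the $\dim\le 2$ argument applies. Your final paragraph correctly flags the geometric input as the crux, but you should identify the obstacle as the \emph{singular discriminant} rather than the non-projectivity of $X$; once you trace through \cite[\S10]{BMT} for how that piece is resolved, the substitution of Proposition~\ref{proposition: kernel nc factors} and Remark~\ref{remark: trivial leq 2}(i) for their nilpotence analogues is indeed mechanical, as you say.
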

\begin{proof}
The proof of \cite[Theorem 1.14]{BMT} can be adapted as follows:
replace the reference to the proof of \cite[Thereom 1.2(i)]{BMT} by the proof of Theorem \ref{theorem: quadric fibrations}(i), consider Remark \ref{remark: trivial leq 2}(i), and use Proposition \ref{proposition: kernel nc factors} instead of \cite[Theorem 1.2]{BMT}.
\end{proof}

\subsection{Cubic fourfolds and Gushel-Mukai fourfolds}

Recall that a \textit{cubic fourfold} is a smooth complex hypersurface of degree 3 in $\PP^5$ and consult \cite[\S 2.2]{OP} for the definition of an (ordinary/ special) Gushel-Mukai fourfold.

In what follows, we adapt \cite[Theorems (A)-(D)]{OP} to our context.

\begin{theorem}
\label{theorem: certain cubic fourfolds}
The equivalence relations $\sim_\alg$ and $\sim_\num$ on $\cZ^*(X)_\QQ$ agree when $X$ is a cubic fourfold or an ordinary generic Gushel-Mukai fourfold.
\label{theorem (A)}
\end{theorem}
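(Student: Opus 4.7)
The plan is to follow the proof of \cite[Theorem (A)]{OP} line by line, substituting the algebraic-equivalence toolkit built in \S \ref{equiv rel} and the subsequent Kernels section for the nilpotence-equivalence toolkit used there. First, for a cubic fourfold $X \subset \PP^5$, I would invoke Kuznetsov's semi-orthogonal decomposition
\[
\perf(X) = \langle \cA_X, \cO_X, \cO_X(1), \cO_X(2) \rangle,
\]
and, for an ordinary generic Gushel-Mukai fourfold, the analogous Kuznetsov-Perry decomposition into a noncommutative K3 component plus exceptional objects. Via Corollary \ref{corollary: nc and c are =} it is equivalent to compute the noncommutative kernel $\ker(q^{X,\nc}_{\sim_\num/\sim_\alg})$.

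Iterated application of Proposition \ref{proposition: kernel nc factors} across the semi-orthogonal decomposition then yields
\[
\ker(q^{X,\nc}_{\sim_\num/\sim_\alg}) \simeq \ker(q^{\cA_X^{\dg},\nc}_{\sim_\num/\sim_\alg}) \,\oplus\, \bigoplus_{i} \ker(q^{\perfdg(\Spec k),\nc}_{\sim_\num/\sim_\alg}),
\]
where the second direct sum runs over the exceptional objects of the decomposition. Each such summand vanishes by Remark \ref{remark: trivial leq 2}(i), since $\Spec k$ is $0$-dimensional, so the task reduces to showing that $\ker(q^{\cA_X^{\dg},\nc}_{\sim_\num/\sim_\alg}) = 0$. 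In the cases under consideration, results of Kuznetsov and Kuznetsov-Perry identify $\cA_X^{\dg}$, up to Morita equivalence, with $\perfdg(S,\alpha)$ for a K3 surface $S$ and a Brauer class $\alpha$ on $S$. Combining Propositions \ref{proposition: morita implies equivalent conj} and \ref{proposition: azumaya imples equivalent conj} one then identifies the remaining kernel with $\ker(q^{S,\nc}_{\sim_\num/\sim_\alg})$, which vanishes by Remark \ref{remark: trivial leq 2}(i) because $S$ is a surface.

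The bulk of the proof is thus a matter of bookkeeping: matching each use of a nilpotence-conjecture ingredient in \cite{OP} to its algebraic-equivalence analogue now at hand (additivity under semi-orthogonal decompositions via Proposition \ref{proposition: kernel nc factors}, Morita invariance via Proposition \ref{proposition: morita implies equivalent conj}, Azumaya invariance via Proposition \ref{proposition: azumaya imples equivalent conj}, and the scheme-to-dg-category translation via Corollary \ref{corollary: nc and c are =}). The main point of friction I expect is ensuring that the specific identifications of $\cA_X^{\dg}$ with $\perfdg(S,\alpha)$ appealed to by \cite{OP} are genuine Morita equivalences of dg categories, and not merely equivalences at the level of noncommutative motives; provided this is so in the cubic fourfold and ordinary generic Gushel-Mukai settings under consideration, the strategy will go through without further input.
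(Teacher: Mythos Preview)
Your high-level plan---rerun the argument of \cite[Theorem (A)]{OP} with each nilpotence-side ingredient replaced by its algebraic-equivalence analogue from this paper---matches the paper's proof exactly, and you have correctly identified the relevant substitutes (Corollary \ref{corollary: nc and c are =}, Propositions \ref{proposition: kernel nc factors}--\ref{proposition: azumaya imples equivalent conj}, Remark \ref{remark: trivial leq 2}(i)).

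There is, however, a genuine gap in your sketch of what \cite{OP} actually does in the cubic fourfold case. You assert that $\cA_X^{\dg}$ is Morita equivalent to $\perfdg(S,\alpha)$ for some K3 surface $S$ and Brauer class $\alpha$, but for a \emph{general} cubic fourfold no such equivalence is known: whether the Kuznetsov component of a cubic fourfold is the derived category of a (twisted) K3 surface is a celebrated open problem, conjecturally tied to the rationality of $X$. Such identifications are available only for special cubic fourfolds (those containing a plane, or more generally those lying on certain Hassett divisors). Since Theorem \ref{theorem (A)} is asserted for \emph{every} cubic fourfold, this cannot be the route \cite{OP} takes, and the friction you flag is therefore not whether the equivalence lifts to the dg level but whether it exists at all. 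You need to revisit \cite{OP} to see what input they actually use to handle $\cA_X$ for an arbitrary cubic fourfold; the paper's one-line proof indicates that, whatever that input is, it carries over directly once $\sim_\nil$ is replaced by $\sim_\alg$ and Remark \ref{remark: trivial leq 2}(i) is invoked.
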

\begin{proof}
Apply the same reasoning of the proof of \cite[Theorem (A)]{OP}, use Remark \ref{remark: trivial leq 2}(i), and replace the equivalence relation $\sim_\nil$ by the equivalence relation $\sim_\alg$.
\end{proof}

\begin{remark}
\renewcommand{\labelenumi}{(\roman{enumi})}
\begin{enumerate}
\item Let $X$ be a cubic fourfold. Recall from \cite{kuzn16} that the Kuznetsov category $\cA_X$ of $X$ is defined as a certain semi-orthogonal component of $\perf(X)$. Therefore, thanks to Theorem \ref{theorem (A)}, Corollary \ref{corollary: nc and c are =}, and Proposition \ref{proposition: kernel nc factors}, we have $\ker(q^{\cA_X^{\dg},\nc}_{\sim_\num/\sim_\alg})= 0$, i.e., the equivalence relations $\sim_\alg$ and $\sim_\num$ on $\cA_X^{\dg}$ agree, where $\cA_X^{\dg}$ denotes the dg enhancement of $\cA_X$ induced from $\perfdg(X)$.
\item Let $X$ be a Gushel-Mukai $n$-fold. Recall from \cite{KuznPer18} that the Gushel-Mukai category $\cA_X$ of $X$ is defined as a certain semi-orthogonal component of $\perf(X)$. Therefore, for $X$ a Gushel-Mukai fourfold, Theorem \ref{theorem (A)}, Corollary \ref{corollary: nc and c are =}, and Proposition \ref{proposition: kernel nc factors}, imply that $\ker(q^{\cA_X^{\dg},\nc}_{\sim_\num/\sim_\alg})=0$, i.e., the equivalence relations $\sim_\alg$ and $\sim_\num$ on $\cA_X^{\dg}$ agree.
\end{enumerate}
\end{remark}

Under the above notations, we obtain the analogous of \cite[Theorem (B)]{OP}:

\begin{theorem}
The equivalence relations $\sim_\alg$ and $\sim_\num$ on $K_0(\cA_X^{\dg})_\QQ$ agree when $X$ is a cubic fourfold or an ordinary generic Gushel-Mukai fourfold.
\label{theorem (B)}
\end{theorem}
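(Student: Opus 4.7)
The plan is to deduce Theorem \ref{theorem (B)} from Theorem \ref{theorem (A)} by transferring the agreement of $\sim_\alg$ and $\sim_\num$ from the Chow groups of $X$ to the Grothendieck group of the Kuznetsov/Gushel--Mukai component $\cA_X^{\dg}$, following the outline already sketched in the remark immediately preceding the statement. The argument is purely formal once Theorem \ref{theorem (A)} is available.

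First, I would translate Theorem \ref{theorem (A)} to the noncommutative side via Corollary \ref{corollary: nc and c are =}: the agreement $\sim_\alg \, = \, \sim_\num$ on $\cZ^*(X)_\QQ$ is equivalent, under the isomorphism $\cZ^*_{\sim_\num/\sim_\alg}(X)_\QQ \simeq K_{0,\sim_\num/\sim_\alg}(\perfdg(X))_\QQ$, to the vanishing $\ker(q^{X,\nc}_{\sim_\num/\sim_\alg}) = 0$.

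Next, I would invoke the defining semi-orthogonal decomposition of $\perf(X)$: for a cubic fourfold, $\perf(X) = \langle \cA_X, \cO_X, \cO_X(1), \cO_X(2) \rangle$ (Kuznetsov \cite{kuzn16}); for an ordinary generic Gushel--Mukai fourfold, the analogous decomposition of Kuznetsov--Perry \cite{KuznPer18}. In both cases the complement of $\cA_X^{\dg}$ inside $\perfdg(X)$ is generated by a full exceptional collection, so each of its pieces is Morita equivalent to $\perfdg(\Spec k)$ and therefore has trivial kernel $\ker(q^{\cdot,\nc}_{\sim_\num/\sim_\alg})$ by Remark \ref{remark: trivial leq 2}(i) combined with Corollary \ref{corollary: nc and c are =}. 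I would then apply Proposition \ref{proposition: kernel nc factors} iteratively --- peeling off one exceptional object at a time in the two-term fashion required by that proposition --- to obtain a direct sum decomposition of $\ker(q^{X,\nc}_{\sim_\num/\sim_\alg})$ in which $\ker(q^{\cA_X^{\dg},\nc}_{\sim_\num/\sim_\alg})$ appears as one of the summands. Since the total kernel vanishes by the previous step, each summand vanishes; in particular $\ker(q^{\cA_X^{\dg},\nc}_{\sim_\num/\sim_\alg}) = 0$, which by Remark \ref{remark: kernel=quocient}(ii) is precisely the statement of Theorem \ref{theorem (B)}.

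I do not anticipate any substantive obstacle: the whole argument is a formal assembly of ingredients already proved. The only mild point requiring attention is that the iteration of Proposition \ref{proposition: kernel nc factors} across a multi-term semi-orthogonal decomposition demands that the intermediate dg categories produced at each splitting step remain smooth and proper; this is guaranteed by \cite[Lemma 2.1]{BMT}, the very lemma invoked inside the proof of Proposition \ref{proposition: kernel nc factors} itself. Consequently Theorem \ref{theorem (B)} is a direct corollary of Theorem \ref{theorem (A)}, Corollary \ref{corollary: nc and c are =}, and Proposition \ref{proposition: kernel nc factors}.
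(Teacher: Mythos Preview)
Your proposal is correct and follows exactly the approach taken in the paper: the paper does not give a separate proof of Theorem~\ref{theorem (B)} but instead derives it in the preceding remark from Theorem~\ref{theorem (A)}, Corollary~\ref{corollary: nc and c are =}, and Proposition~\ref{proposition: kernel nc factors}, precisely as you do. Your additional care about iterating Proposition~\ref{proposition: kernel nc factors} and the smoothness/properness of intermediate pieces is appropriate and matches what the paper implicitly uses.
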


We now adapt \cite[Theorems (C) and (D)]{OP} to our context:

\begin{theorem}
The equivalence relations $\sim_\alg$ and $\sim_\num$ on $\cZ^*(X)_\QQ$ agree when $X$ is a generic Gushel-Mukai fourfold containing a plane $P$ of type $\Gr(2,3)$.
\label{theorem (C)}
\end{theorem}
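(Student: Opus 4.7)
The plan is to adapt the proof of \cite[Theorem (C)]{OP} to the algebraic equivalence setting, in the same spirit as the proofs of Theorems \ref{theorem: certain moishezon}, \ref{theorem (A)} and \ref{theorem (B)} above. The overall strategy is to reduce, via the semi-orthogonal decomposition of $\perf(X)$ coming from the Gushel-Mukai structure, the vanishing of $\ker(q^{X}_{\sim_\num/\sim_\alg})$ to the vanishing of $\ker(q^{\cA_X^{\dg},\nc}_{\sim_\num/\sim_\alg})$, and then to further reduce the latter to a statement on a K3 surface, where Remark \ref{remark: trivial leq 2}(i) is directly applicable.

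First, I would invoke the semi-orthogonal decomposition of $\perf(X)$ for a Gushel-Mukai fourfold, whose components consist of the Gushel-Mukai category $\cA_X$ together with a collection of exceptional objects (each Morita equivalent to $\perf(k)$). Applying Proposition \ref{proposition: kernel nc factors} iteratively, together with Remark \ref{remark: trivial leq 2}(i) applied to the $0$-dimensional exceptional pieces, the computation of $\ker(q^{\perfdg(X),\nc}_{\sim_\num/\sim_\alg})$ reduces to $\ker(q^{\cA_X^{\dg},\nc}_{\sim_\num/\sim_\alg})$; by Corollary \ref{corollary: nc and c are =} the vanishing of the former is equivalent to the vanishing of $\ker(q^{X}_{\sim_\num/\sim_\alg})$, which is the desired conclusion.

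Next, I would use the key geometric input from Kuznetsov-Perry: when $X$ is a generic Gushel-Mukai fourfold containing a plane $P$ of type $\Gr(2,3)$, the category $\cA_X^{\dg}$ is Morita equivalent to $\perfdg(S,\BB_0)$, where $S$ is a (smooth projective) K3 surface and $\BB_0$ is an Azumaya algebra on $S$. Chaining the reductions, Proposition \ref{proposition: morita implies equivalent conj}(ii) gives $\ker(q^{\cA_X^{\dg},\nc}_{\sim_\num/\sim_\alg}) \simeq \ker(q^{\perfdg(S,\BB_0),\nc}_{\sim_\num/\sim_\alg})$; Proposition \ref{proposition: azumaya imples equivalent conj}(ii) then identifies this with $\ker(q^{S,\nc}_{\sim_\num/\sim_\alg})$, and Corollary \ref{corollary: nc and c are =} identifies the latter with $\ker(q^{S}_{\sim_\num/\sim_\alg})$. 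Since $\dim(S)=2$, Remark \ref{remark: trivial leq 2}(i) makes this last kernel vanish, and tracing the chain of isomorphisms back to $X$ yields the theorem.

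The main obstacle is essentially bookkeeping rather than mathematical: one has to carefully import the precise form of the Kuznetsov-Perry equivalence $\cA_X^{\dg} \simeq \perfdg(S,\BB_0)$ under the hypotheses ``generic'' and ``contains a plane of type $\Gr(2,3)$'' (as used in \cite{OP}) and to verify that all the intermediate dg categories appearing are smooth and proper, so that the propositions of \S 4 apply verbatim. Once that identification is cited correctly, the rest of the argument is a purely formal diagram chase using the machinery developed in Sections 3-4, exactly parallel to the proof of Theorem \ref{theorem (A)}.
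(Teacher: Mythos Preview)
Your reduction strategy --- pass from $\cZ^*(X)_\QQ$ to $K_0(\cA_X^{\dg})_\QQ$ via Corollary~\ref{corollary: nc and c are =} and Proposition~\ref{proposition: kernel nc factors}, then identify $\cA_X^{\dg}$ with something for which $\sim_\alg=\sim_\num$ is already known --- is exactly the paper's. The divergence is in the identification step. The paper, following \cite[\S 5]{OP}, does not reduce to a twisted K3 surface: it uses the Kuznetsov--Perry duality for Gushel--Mukai varieties to obtain a (Fourier--Mukai) equivalence $\cA_X \simeq \cA_{X'}$ with $X'$ an \emph{ordinary generic} Gushel--Mukai fourfold, and then applies Theorem~\ref{theorem (B)} to $\cA_{X'}^{\dg}$. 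This is why the paper's list of ingredients is Corollary~\ref{corollary: nc and c are =}, Propositions~\ref{proposition: kernel nc factors} and~\ref{proposition: morita implies equivalent conj}, and Theorem~\ref{theorem (B)} --- and \emph{not} Proposition~\ref{proposition: azumaya imples equivalent conj}.

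Your proposed input, an equivalence $\cA_X^{\dg}\simeq \perfdg(S,\BB_0)$ with $S$ a K3 surface, is the picture for a cubic fourfold containing a plane; for a generic GM fourfold containing a $\Gr(2,3)$-plane the geometric input actually supplied by \cite{KuznPer18} and used in \cite[\S 5]{OP} is the duality $\cA_X \simeq \cA_{X'}$, not a direct twisted-K3 description. So what you flag as a ``bookkeeping'' obstacle is in fact substantive: unless you can point to a reference establishing the twisted-K3 equivalence in this specific GM setting, the argument should be rerouted through Theorem~\ref{theorem (B)} as the paper does. Once that substitution is made, the rest of your chain of reductions goes through verbatim.
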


\begin{proof}
The proof is similar to the one given in \cite[\S 5]{OP}. Just bear in mind that one needs to use Corollary \ref{corollary: nc and c are =}, Propositions \ref{proposition: kernel nc factors} and \ref{proposition: morita implies equivalent conj}, and also to consider Theorem \ref{theorem (B)} instead of \cite[Theorem (B)]{OP}.
\end{proof}

\begin{theorem}
\label{theorem (D)}
The equivalence relations $\sim_\alg$ and $\sim_\num$ on $\cZ^*(X)_\QQ$ agree when $X$ is an ordinary Gushel-Mukai fourfold containing a quintic del Pezzo surface.
\end{theorem}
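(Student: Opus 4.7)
The plan is to imitate the strategy used for Theorem \ref{theorem (C)} and the corresponding result \cite[Theorem (D)]{OP}, replacing every appeal to the nilpotence equivalence by the algebraic one via the technology developed in \S 3--\S 4. First, I would recall from \cite{KuznPer18} that for any Gushel-Mukai fourfold $X$, the derived category $\perf(X)$ admits a semi-orthogonal decomposition
\begin{align*}
\perf(X) = \langle \cA_X, \cO_X, \cU_X^{\vee}, \cO_X(1), \cU_X^{\vee}(1) \rangle,
\end{align*}
where $\cU_X$ is the restriction of the tautological bundle on the Grassmannian and $\cA_X$ is the Gushel-Mukai category. Applying Corollary \ref{corollary: nc and c are =} together with Proposition \ref{proposition: kernel nc factors} (iteratively) to this decomposition, and using Remark \ref{remark: trivial leq 2}(i) to kill the contributions of the four exceptional line bundles, one obtains
\begin{align*}
\cZ^*_{\sim_\num/\sim_\alg}(X)_\QQ \simeq K_{0,\sim_\num/\sim_\alg}(\cA_X^{\dg})_\QQ.
\end{align*}

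Second, I would invoke the geometric input specific to this case: by the work of Kuznetsov-Perry, when $X$ is an ordinary Gushel-Mukai fourfold containing a quintic del Pezzo surface, the Gushel-Mukai category $\cA_X$ is equivalent (as a triangulated category, with a unique dg enhancement by \cite{LunOrl}) to $\perf(S)$ for some associated K3 surface $S$. Consequently the dg category $\cA_X^{\dg}$ is Morita equivalent to $\perfdg(S)$, so Proposition \ref{proposition: morita implies equivalent conj} gives
\begin{align*}
K_{0,\sim_\num/\sim_\alg}(\cA_X^{\dg})_\QQ \simeq K_{0,\sim_\num/\sim_\alg}(\perfdg(S))_\QQ.
\end{align*}

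Finally, another application of Corollary \ref{corollary: nc and c are =} identifies the right-hand side with $\cZ^*_{\sim_\num/\sim_\alg}(S)_\QQ$, which vanishes because $S$ is a (smooth projective) surface, by Remark \ref{remark: trivial leq 2}(i). Chaining these isomorphisms yields $\cZ^*_{\sim_\num/\sim_\alg}(X)_\QQ = 0$, which is precisely the claim that $\sim_\alg$ and $\sim_\num$ coincide on $\cZ^*(X)_\QQ$.

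The main obstacle is purely bibliographic rather than technical: one has to extract from the literature the precise statement that, under the hypothesis of containing a quintic del Pezzo surface, $\cA_X$ is equivalent to the derived category of an honest (commutative, untwisted) K3 surface. Once this geometric fact is quoted, the rest of the argument is formal and parallels the proof of Theorem \ref{theorem (C)} almost verbatim, with the substitution of the Morita/K3 step in place of the plane-of-type-$\Gr(2,3)$ step.
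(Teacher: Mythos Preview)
Your proposal is correct and follows essentially the same route as the paper: reduce $\cZ^*_{\sim_\num/\sim_\alg}(X)_\QQ$ to $K_{0,\sim_\num/\sim_\alg}(\cA_X^{\dg})_\QQ$ via the semi-orthogonal decomposition of \cite[Proposition 2.3]{KuznPer18} together with Corollary \ref{corollary: nc and c are =} and Proposition \ref{proposition: kernel nc factors}, then use the Morita equivalence $\cA_X^{\dg}\simeq\perfdg(S)$ for a K3 surface $S$ (this is precisely the content hidden behind ``follow the proof of \cite[Theorem (D)]{OP}'') and conclude by Proposition \ref{proposition: morita implies equivalent conj}, Corollary \ref{corollary: nc and c are =}, and Remark \ref{remark: trivial leq 2}(i). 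The only difference is presentational: you spell out the K3 step that the paper leaves wrapped inside the reference to \cite{OP}.
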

\begin{proof}
Note first that the semi-orthogonal decomposition of $\perf (X)$ consists only of $\cA_X$ and of exceptional objects, \cite[Proposition 2.3]{KuznPer18}. Therefore, Corollary \ref{corollary: nc and c are =} and a repeated use of Proposition \ref{proposition: kernel nc factors} imply that $\ker(q^X_{\sim_\num/\sim_\alg})$ and $\ker(q^{\cA_X^{\dg},\nc}_{\sim_\num/\sim_\alg})$ are isomorphic. 
To finish the proof, just consider Corollary \ref{corollary: nc and c are =}, Proposition \ref{proposition: morita implies equivalent conj}, and follow the proof of \cite[Theorem (D)]{OP}.
\end{proof}
We are in conditions to generalize Theorem \ref{theorem (B)}.
\begin{theorem}
The equivalence relations $\sim_\alg$ and $\sim_\num$ on $K_0(\cA_X^{\dg})_\QQ$ agree when $X$ is a cubic fourfold, an ordinary generic Gushel-Mukai fourfold, an ordinary Gushel-Mukai fourfold containing a plane of type $\Gr(2,3)$ or an ordinary Gushel-Mukai fourfold containing a quintic del Pezzo surface.
\label{generalized theorem (B)}
\end{theorem}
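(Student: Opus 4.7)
The cases where $X$ is a cubic fourfold or an ordinary generic Gushel-Mukai fourfold are already handled by Theorem \ref{theorem (B)}, so the plan is to reduce the two remaining cases to the corresponding commutative statements obtained in Theorems \ref{theorem (C)} and \ref{theorem (D)}. In both situations, the strategy is the same: translate the known vanishing of $\ker(q^{X}_{\sim_\num/\sim_\alg})$ to the noncommutative side via Corollary \ref{corollary: nc and c are =}, and then project it onto the Kuznetsov component $\cA_X^{\dg}$ using the semi-orthogonal decomposition of $\perf(X)$.

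More precisely, I would first recall that, in each of the two remaining cases, $\perf(X)$ admits a semi-orthogonal decomposition whose components are $\cA_X$ together with finitely many exceptional objects; this is standard for cubic and Gushel-Mukai fourfolds and is the same fact that is used in the proof of Theorem \ref{theorem (D)}. Each exceptional component is Morita equivalent to $\perfdg(\Spec(k))$, so by Remark \ref{remark: trivial leq 2}(i) and Proposition \ref{proposition: morita implies equivalent conj}(ii) its associated kernel $\ker(q^{\bullet,\nc}_{\sim_\num/\sim_\alg})$ vanishes. Iterating Proposition \ref{proposition: kernel nc factors} along the semi-orthogonal decomposition, I then obtain a canonical isomorphism
\begin{align*}
\ker(q^{\perfdg(X),\nc}_{\sim_\num/\sim_\alg}) \;\simeq\; \ker(q^{\cA_X^{\dg},\nc}_{\sim_\num/\sim_\alg}).
\end{align*}

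Now I would apply Theorem \ref{theorem (C)} in the first case and Theorem \ref{theorem (D)} in the second case to conclude that $\sim_\alg$ and $\sim_\num$ agree on $\cZ^*(X)_\QQ$, i.e.\ $\ker(q^{X}_{\sim_\num/\sim_\alg}) = 0$. By Corollary \ref{corollary: nc and c are =}, this is equivalent to $\ker(q^{\perfdg(X),\nc}_{\sim_\num/\sim_\alg}) = 0$, and the isomorphism above forces $\ker(q^{\cA_X^{\dg},\nc}_{\sim_\num/\sim_\alg}) = 0$. By Remark \ref{remark: kernel=quocient}(ii), this is precisely the statement that $\sim_\alg$ and $\sim_\num$ agree on $K_0(\cA_X^{\dg})_\QQ$, as desired.

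The argument is essentially an unwinding of the same mechanism used inside the proof of Theorem \ref{theorem (D)}, so no substantive new obstacle is expected; the only point requiring care is the bookkeeping needed to iterate Proposition \ref{proposition: kernel nc factors} (which is stated for two-piece decompositions) along the full semi-orthogonal decomposition of $\perf(X)$, which is straightforward by induction on the number of components.
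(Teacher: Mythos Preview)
Your proposal is correct and follows exactly the approach the paper has in mind: the paper does not write out a separate proof for this theorem, presenting it as an immediate consequence of the preceding results, and the mechanism you spell out (Corollary \ref{corollary: nc and c are =} plus iterated Proposition \ref{proposition: kernel nc factors} applied to the semi-orthogonal decomposition of $\perf(X)$, together with Theorems \ref{theorem (C)} and \ref{theorem (D)}) is precisely the one already used in the Remark after Theorem \ref{theorem (A)} and in the proof of Theorem \ref{theorem (D)}.
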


\subsection{K{\"u}chle fourfolds}

A K{\"u}chle fourfold is the zero locus of a global section of a certain vector bundle on a specific Grassmannian. In particular, a \textit{K{\"u}chle fourfold of type} $c_7$, denoted by $X_{c_7}$, is the zero locus of a global section of the vector bundle $\Lambda^2\cU^\perp(1) \oplus \cO(1)$ on $\Gr(3,8)$, where $\cU^\perp$
is the tautological vector subbundle of rank $5$ on the Grassmannian $\Gr(3, 8)$ and $\cO(1)$ stands for the ample generator of its Picard group; consult \cite{Kuchle95, Kuzn15} for further details.

\begin{theorem}
\label{theorem: certain kuchel fourfolds}
The equivalence relations $\sim_\alg$ and $\sim_\num$ on $\cZ^*(X_{c_7})_\QQ$ agree.
\end{theorem}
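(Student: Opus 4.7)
The plan is to reduce the Küchle fourfold $X_{c_7}$ to an already-treated case (cubic fourfolds) by exploiting the semi-orthogonal decomposition of $\perf(X_{c_7})$ due to Kuznetsov. Concretely, Kuznetsov has shown that $\perf(X_{c_7})$ decomposes as
\begin{align*}
\perf(X_{c_7}) = \langle \cA_{X_{c_7}}, E_1, \ldots, E_r \rangle,
\end{align*}
where $E_1,\ldots,E_r$ is a finite exceptional collection and the ``nontrivial'' component $\cA_{X_{c_7}}$ is a K3-type category which is equivalent, as a triangulated category, to the Kuznetsov component $\cA_Y$ of some cubic fourfold $Y$. Since $\perf(X_{c_7})$ admits a unique dg enhancement (being the derived category of a smooth projective variety), this triangulated equivalence lifts to a Morita equivalence $\cA_{X_{c_7}}^{\dg} \simeq \cA_Y^{\dg}$.

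First, I would translate the question to the noncommutative world via Corollary \ref{corollary: nc and c are =}, which yields $\cZ^*_{\sim_\num/\sim_\alg}(X_{c_7})_\QQ \simeq K_{0,\sim_\num/\sim_\alg}(\perfdg(X_{c_7}))_\QQ$. Next, I would apply Proposition \ref{proposition: kernel nc factors} iteratively to the above semi-orthogonal decomposition to obtain a direct-sum splitting
\begin{align*}
K_{0,\sim_\num/\sim_\alg}(\perfdg(X_{c_7}))_\QQ \simeq K_{0,\sim_\num/\sim_\alg}(\cA_{X_{c_7}}^{\dg})_\QQ \,\bigoplus\, \bigoplus_{i=1}^{r} K_{0,\sim_\num/\sim_\alg}(E_i^{\dg})_\QQ.
\end{align*}
Each exceptional object $E_i$ gives $E_i^{\dg}$ Morita equivalent to $k$, so by Proposition \ref{proposition: morita implies equivalent conj}(ii) together with Remark \ref{remark: trivial leq 2}(i) (the $0$-dimensional case), the summands on the right vanish.

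It thus suffices to handle the K3-type summand. By Proposition \ref{proposition: morita implies equivalent conj}(ii) applied to the Morita equivalence $\cA_{X_{c_7}}^{\dg} \simeq \cA_Y^{\dg}$, we get
\begin{align*}
K_{0,\sim_\num/\sim_\alg}(\cA_{X_{c_7}}^{\dg})_\QQ \simeq K_{0,\sim_\num/\sim_\alg}(\cA_Y^{\dg})_\QQ,
\end{align*}
and the right-hand side vanishes by Theorem \ref{generalized theorem (B)} (the cubic fourfold case of the noncommutative Kuznetsov component). Combining these isomorphisms gives $\cZ^*_{\sim_\num/\sim_\alg}(X_{c_7})_\QQ = 0$, that is, $\sim_\alg$ and $\sim_\num$ agree on $\cZ^*(X_{c_7})_\QQ$.

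The principal obstacle is not the formal bookkeeping above but rather citing/verifying the comparison $\cA_{X_{c_7}}^{\dg} \simeq \cA_Y^{\dg}$ at the level of dg categories (a Morita equivalence, not merely a triangulated one); this relies on the uniqueness of dg enhancements for derived categories of smooth projective varieties (as already invoked in the footnote in \S\ref{subsection: dg}) applied to Kuznetsov's construction of the K3-type component of $X_{c_7}$.
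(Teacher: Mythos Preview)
Your proposal is correct and follows essentially the same route as the paper: invoke Kuznetsov's semi-orthogonal decomposition of $\perf(X_{c_7})$ into six exceptional objects plus a K3-type component equivalent to the Kuznetsov component of a cubic fourfold, then apply Corollary~\ref{corollary: nc and c are =}, Proposition~\ref{proposition: kernel nc factors}, Remark~\ref{remark: trivial leq 2}(i), and Theorem~\ref{theorem (B)} (or~\ref{generalized theorem (B)}). The only difference worth noting is that the paper records the equivalence $\cA_{X_{c_7}}\simeq \cA_Z$ as a Fourier--Mukai equivalence (which automatically lifts to the dg level), whereas you route through uniqueness of dg enhancements; both justifications are fine.
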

\begin{proof}
Thanks to \cite[Corollary 4.12]{Kuzn15}, the category $\perf(X_{c_7})$ admits a semi-orthogonal decomposition with 6 exceptional line bundles and a noncommutative K$3$ category $\cA_X$ which is (Fourier-Mukai) equivalent to the non-trivial part of the derived category of a cubic fourfold $Z$.
Consequently, Proposition \ref{proposition: kernel nc factors}, Remark \ref{remark: trivial leq 2}(i), and Theorem \ref{theorem (B)}, imply that $\ker(q^{X_{c_7},\nc}_{\sim_\num/\sim_\alg}) \simeq \ker(q^{\cA_X^{\dg},\nc}_{\sim_\num/\sim_\alg})$ is trivial. Thanks to Corollary \ref{corollary: nc and c are =}, this implies that $\cZ^*_{\sim_\num/\sim_\alg}(X_{c_7})_\QQ$ is trivial.
\end{proof}

\begin{remark}
A consequence of Theorem \ref{theorem: certain kuchel fourfolds} is that Voevodsky's nilpotence conjecture holds for $X_{c_7}$.
To the best of the author's knowledge, this proves Voevodsky's nilpotence conjecture in new cases.
Note that $X_{c_7}$ is not a Gushel-Mukai fourfold because it has Picard number greater than 1; see \cite[\S 1]{DebarKuzn18}.
\end{remark}

\subsection{Family of sextic del Pezzo surfaces}

A \textit{sextic du Val del Pezzo surface} is a normal integral projective surface $X$ with at worst du Val singularities and ample anticanonical class such that $K^2_X=6$. 
Take $S$ and $T$ smooth projective $k$-schemes and $f:T \rightarrow S$ a \textit{du Val family of sextic del Pezzo surfaces}, i.e., $f$ is a flat morphism such that for every geometric point $s \in S$ the fiber $T_s$ of $T$ over $S$ is a sextic du Val del Pezzo surface.
Following \cite[\S 5]{kuzn18}, with $d=2,3$, let $\mathcal{M}_d$ denote the relative moduli stack of semi-stable sheaves on fibers of $T$ over $S$ with Hilbert polynomial $h_d(t):=(3t+d)(t+1)$ and $Z_d$ the coarse moduli space of $\mathcal{M}_d$. Consequently, there are finite flat morphisms $Z_2 \rightarrow S$ and $Z_3 \rightarrow S$ with degree $2$ and $3$, respectively. 

\begin{theorem}
\label{theorem: certain sextic del Pezzo}
Let $f:T \rightarrow S$ be a du Val family of sextic del Pezzo surfaces, and assume that the characteristic of $k$ is not $2$ neither $3$. Under these conditions, with $\sim_1, \sim_2 \in \{\sim_\rat, \sim_\alg, \sim_\nil, \sim_\num\}$, we have an isomorphism:
\begin{align*}
 \cZ^*_{\sim_2/\sim_1}(T)_\QQ \simeq \cZ^*_{\sim_2/\sim_1}(S)_\QQ \oplus \cZ^*_{\sim_2/\sim_1}(Z_2)_\QQ \oplus \cZ^*_{\sim_2/\sim_1}(Z_3)_\QQ.
\end{align*}
\end{theorem}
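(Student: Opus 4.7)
The plan is to follow the same template used in the preceding results on quadric fibrations and intersections of quadrics, with the key input replaced by Kuznetsov's semi-orthogonal decomposition for du Val families of sextic del Pezzo surfaces established in \cite{kuzn18}. Concretely, under the hypothesis that $\mathrm{char}(k)\neq 2,3$, there is a semi-orthogonal decomposition of the form
\begin{align*}
\perf(T) = \langle \perf(S),\, \perf(Z_2,\BB_2),\, \perf(Z_3,\BB_3) \rangle,
\end{align*}
where $\BB_2$ (resp.\ $\BB_3$) is a sheaf of Azumaya algebras on $Z_2$ (resp.\ $Z_3$) built from the universal sheaves on the relative moduli stacks $\mathcal{M}_2$ and $\mathcal{M}_3$. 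Passing to dg enhancements induced by $\perfdg(T)$, we obtain an analogous decomposition of $\mathrm{H}^0(\perfdg(T))$ in the sense of Bondal--Orlov.

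First, I would invoke Corollary~\ref{corollary: nc and c are =} to replace the commutative kernels $\ker(q^{T}_{\sim_2/\sim_1})$, $\ker(q^{S}_{\sim_2/\sim_1})$, $\ker(q^{Z_2}_{\sim_2/\sim_1})$, $\ker(q^{Z_3}_{\sim_2/\sim_1})$ by the corresponding noncommutative kernels $\ker(q^{-,\nc}_{\sim_2/\sim_1})$ of $\perfdg(T)$, $\perfdg(S)$, $\perfdg(Z_2)$, $\perfdg(Z_3)$. Next, I would apply Proposition~\ref{proposition: kernel nc factors} twice to the above semi-orthogonal decomposition (first separating $\perf(S)$ from the rest, then separating $\perf(Z_2,\BB_2)$ from $\perf(Z_3,\BB_3)$) in order to obtain
\begin{align*}
\ker(q^{T,\nc}_{\sim_2/\sim_1}) \simeq \ker(q^{S,\nc}_{\sim_2/\sim_1}) \oplus \ker(q^{\perfdg(Z_2,\BB_2),\nc}_{\sim_2/\sim_1}) \oplus \ker(q^{\perfdg(Z_3,\BB_3),\nc}_{\sim_2/\sim_1}).
\end{align*}
Then Proposition~\ref{proposition: azumaya imples equivalent conj}(ii) allows me to strip off the Azumaya twists $\BB_2$ and $\BB_3$, identifying the last two summands with $\ker(q^{Z_2,\nc}_{\sim_2/\sim_1})$ and $\ker(q^{Z_3,\nc}_{\sim_2/\sim_1})$ respectively. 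A final application of Corollary~\ref{corollary: nc and c are =} translates everything back to the commutative side, yielding the claimed decomposition of $\cZ^*_{\sim_2/\sim_1}(T)_\QQ$ via Remark~\ref{remark: kernel=quocient}.

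The main obstacle is purely bibliographical: one must carefully extract from \cite[\S 5]{kuzn18} the precise statement that $\perfdg(T)$ decomposes as $\langle \perfdg(S),\perfdg(Z_2,\BB_2),\perfdg(Z_3,\BB_3)\rangle$ with genuine Azumaya algebras on the (possibly non-smooth a priori) coarse moduli spaces $Z_2$ and $Z_3$, and to verify that $Z_2$ and $Z_3$ are smooth proper $k$-schemes so that Corollary~\ref{corollary: nc and c are =} applies. The assumption $\mathrm{char}(k)\neq 2,3$ is what guarantees the relevant \'etale local structure of the family and the existence of the Azumaya algebras $\BB_2,\BB_3$; once this is in place, the rest is a mechanical combination of the previously established noncommutative tools.
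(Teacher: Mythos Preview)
Your proposal is correct and follows essentially the same route as the paper: invoke Kuznetsov's semi-orthogonal decomposition $\perf(T)=\langle \perf(S),\perf(Z_2,\BB_2),\perf(Z_3,\BB_3)\rangle$ from \cite[Theorem~5.2 and Proposition~5.11]{kuzn18}, apply Proposition~\ref{proposition: kernel nc factors} and Proposition~\ref{proposition: azumaya imples equivalent conj} to decompose the noncommutative kernels, and then use Corollary~\ref{corollary: nc and c are =} to pass to the commutative side. The paper's proof is terser and does not spell out the smoothness verification for $Z_2,Z_3$ that you flag, but the logical structure is the same.
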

\begin{proof}
Following \cite[Theorem 5.2 and Proposition 5.11]{kuzn18}, the category $\perf (T)$ admits a semi-orthogonal decomposition
\begin{align*}
    \perf(T)=\langle \perf(S), \perf(Z_2,\BB_2), \perf(Z_3,\BB_3)\rangle,
\end{align*}
where $\cF_2$ and $\cF_3$ are, resp., certains sheafs of Azumaya algebras over $Z_2$, and $Z_3$, of order 2, and 3, resp.. By considering Propositions \ref{proposition: kernel nc factors} and \ref{proposition: azumaya imples equivalent conj}, we hence conclude that
\begin{align*}
 K_{0,\sim_2/\sim_1}(\perfdg(T))_\QQ \simeq K_{0,\sim_2/\sim_1}(\perfdg(S))_\QQ \oplus K_{0,\sim_2/\sim_1}(\perfdg(Z_2))_\QQ \oplus K_{0,\sim_2/\sim_1}(\perfdg(Z_3))_\QQ.
\end{align*}
\noindent Now, an application of Corollary \ref{corollary: nc and c are =} finishes the proof.
\end{proof}

\begin{corollary}
\label{corollary: sextic del Pezzo}
Let $f:T \rightarrow S$ be a du Val family of sextic del Pezzo surface and assume that the characteristic of $k$ is not $2$ neither $3$. If $\dim (S) \leq 2$, then the equivalence relations $\sim_\num$ and $\sim_\alg$ on $\cZ^*(T)_\QQ$ agree.
\end{corollary}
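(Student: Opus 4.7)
The plan is to deduce the corollary as a direct consequence of Theorem \ref{theorem: certain sextic del Pezzo}, combined with the low-dimensional vanishing recorded in Remark \ref{remark: trivial leq 2}(i). First I would specialize the isomorphism of Theorem \ref{theorem: certain sextic del Pezzo} to $\sim_1\, = \,\sim_\alg$ and $\sim_2\, = \,\sim_\num$, producing the direct sum decomposition
\begin{align*}
\cZ^*_{\sim_\num/\sim_\alg}(T)_\QQ \simeq \cZ^*_{\sim_\num/\sim_\alg}(S)_\QQ \oplus \cZ^*_{\sim_\num/\sim_\alg}(Z_2)_\QQ \oplus \cZ^*_{\sim_\num/\sim_\alg}(Z_3)_\QQ.
\end{align*}

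Next I would argue that each of the three summands vanishes. For $S$ this is immediate from Remark \ref{remark: trivial leq 2}(i), since by hypothesis $\dim(S) \leq 2$. For $Z_2$ and $Z_3$, recall that the structure morphisms $Z_2 \to S$ and $Z_3 \to S$ constructed in \cite[\S 5]{kuzn18} are finite flat of degrees $2$ and $3$ respectively; in particular $\dim(Z_d) = \dim(S) \leq 2$ for $d=2,3$, so a second application of Remark \ref{remark: trivial leq 2}(i) forces $\cZ^*_{\sim_\num/\sim_\alg}(Z_d)_\QQ = 0$. Assembling these three vanishings yields $\cZ^*_{\sim_\num/\sim_\alg}(T)_\QQ = 0$, which in view of Remark \ref{remark: kernel=quocient}(i) is precisely the statement that $\sim_\alg$ and $\sim_\num$ agree on $\cZ^*(T)_\QQ$.

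Essentially no real obstacle arises: all the conceptual work has already been done in Theorem \ref{theorem: certain sextic del Pezzo}. The only verification that must be handled is that $Z_2$ and $Z_3$ fit into the smooth proper framework required by Remark \ref{remark: trivial leq 2}(i); this is ensured by the assumption $\mathrm{char}(k)\notin\{2,3\}$, which is precisely the condition under which the coarse moduli spaces of $\mathcal{M}_2$ and $\mathcal{M}_3$ constructed by Kuznetsov are well-behaved. Given this, the corollary reduces to the two-line combination described above.
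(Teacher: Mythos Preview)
Your proposal is correct and matches the paper's (implicit) approach: the paper states this corollary without proof, treating it as immediate from Theorem \ref{theorem: certain sextic del Pezzo} and Remark \ref{remark: trivial leq 2}(i), exactly as in the analogous Corollary \ref{corollary: certain quadrics}. Your argument simply spells out the details---specializing to $\sim_1=\sim_\alg$, $\sim_2=\sim_\num$, noting $\dim(Z_d)=\dim(S)\leq 2$ via finiteness, and invoking Remark \ref{remark: trivial leq 2}(i) for each summand.
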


\begin{remark}
Note that, in the conditions of Corollary \ref{corollary: sextic del Pezzo}, Voevodsky's conjecture holds for $T$.
\end{remark}

\subsection{Homological Projective Duality}
\label{subsection: HPD}
Let $X$ be a smooth projective $k$-scheme equipped with a line bundle $\cL_X(1)$ and let us write $X\rightarrow \PP(V)$ for the associated morphism where $V:=\mathrm{H}^0(X, \cL_X(1))^*$.
Assume that the triangulated category $\perf (X)$ admits a Lefschetz decomposition $\langle \AAA_0, \AAA_1(1), \ldots, \AAA_{i-1}(i-1) \rangle$ with respect to $\cL_X(1)$, where $\AAA_r(r):=\AAA_r \otimes \cL_X(r)$, see \cite[Definition 4.1]{Kuzn07}. Note that $\AAA_r(r) \simeq \AAA_r$. 
Bearing in mind \cite[Definition 6.1]{Kuzn07}, let $Y$ be the Homological Projective (HP)-dual of $X$, $\cL_Y(1)$ the HP-dual line bundle, and $Y\rightarrow \PP(V^*)$ the morphism associated to $\cL_Y(1)$. Given a linear subspace $L\subseteq V^* $, we consider the linear sections $X_L:= X \times_{\PP(V)}\PP(L^\perp)$ and $Y_L:= Y \times_{\PP(V^*)}\PP(L)$. For a survey on HP Duality we invite the reader to consult \cite{kuzn14}.

\begin{theorem}[HPD invariance]
Let $X$ and $Y$ be as above and assume that $X_L$ and $Y_L$ are smooth and that $\dim (X_L)=\dim (X)-\dim(L)$ and $\dim (Y_L)=\dim (Y)-\dim(L^\perp)$. Consider the equivalence relations $\sim_1, \sim_2 \in \{\sim_\rat, \sim_\alg, \sim_\nil, \sim_\num\}$ and assume moreover that $\ker(q^{\AAA_0^{\dg},\nc}_{\sim_2/\sim_1})=0$ (or, equivalently, that $K_{0,\sim_2/\sim_1}(\AAA_0^{\dg})_\QQ=0$), where $\AAA_0^{\dg}$ stands for the dg enhancement of $\AAA_0$ induced from $\perfdg(X)$.
Under these assumptions, we have an isomorphism $\cZ^*_{\sim_2/\sim_1}(X_L)_\QQ \simeq \cZ^*_{\sim_2/\sim_1}(Y_L)_\QQ$.
\label{Theorem: HPD}
\end{theorem}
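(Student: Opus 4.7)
The plan is to combine Kuznetsov's main HPD theorem with the kernel-splitting machinery developed in \S 4. More precisely, HPD supplies compatible semi-orthogonal decompositions
$$\perf(X_L) = \langle \cC_L,\ \AAA_l(1),\ \ldots,\ \AAA_{i-1}(i-l) \rangle \quad \text{and} \quad \perf(Y_L) = \langle \cB_{j-1}(1-j+l'),\ \ldots,\ \cB_{l'}(-1),\ \cC_L \rangle,$$
with $l=\dim L$, $l'=\dim L^\perp$, and a common ``primitive'' component $\cC_L$ shared by the two sides; the remaining Lefschetz pieces are (twists of) admissible subcategories of $\AAA_0$ (on the $X_L$-side) and of the dual Lefschetz piece $\cB_0$ (on the $Y_L$-side).

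For the $X_L$-side, the first task is to show that $\ker(q^{\AAA_r^{\dg},\nc}_{\sim_2/\sim_1}) = 0$ for every $r$ in the relevant range. Since each inclusion $\AAA_r \hookrightarrow \AAA_0$ is admissible, iterating Proposition \ref{proposition: kernel nc factors} together with the hypothesis $\ker(q^{\AAA_0^{\dg},\nc}_{\sim_2/\sim_1}) = 0$ forces the vanishing. The twists $\AAA_r(a) = \AAA_r \otimes \cL_X(a)$ are Morita equivalent to $\AAA_r$, so Proposition \ref{proposition: morita implies equivalent conj}(ii) transports the vanishing to them. A final application of Proposition \ref{proposition: kernel nc factors} to the $\perf(X_L)$-decomposition then delivers the identification $\ker(q^{X_L,\nc}_{\sim_2/\sim_1}) \simeq \ker(q^{\cC_L^{\dg},\nc}_{\sim_2/\sim_1})$.

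The parallel step on the $Y_L$-side requires showing $\ker(q^{\cB_s^{\dg},\nc}_{\sim_2/\sim_1}) = 0$ for every dual Lefschetz component $\cB_s$ appearing in the decomposition. This is the main obstacle, since the theorem's hypothesis is phrased only in terms of $\AAA_0$, not of $\cB_0$. The resolution is to invoke Kuznetsov's structural identification, via the HPD Fourier--Mukai kernel, of each $\cB_s^{\dg}$ (up to Morita equivalence and Lefschetz twist) with an admissible subcategory of $\AAA_0^{\dg}$; once this identification is in place, the first step's argument applies verbatim and yields $\ker(q^{Y_L,\nc}_{\sim_2/\sim_1}) \simeq \ker(q^{\cC_L^{\dg},\nc}_{\sim_2/\sim_1})$. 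Combining the two identifications gives $\ker(q^{X_L,\nc}_{\sim_2/\sim_1}) \simeq \ker(q^{Y_L,\nc}_{\sim_2/\sim_1})$, and Corollary \ref{corollary: nc and c are =} finally translates this into the claimed isomorphism $\cZ^*_{\sim_2/\sim_1}(X_L)_\QQ \simeq \cZ^*_{\sim_2/\sim_1}(Y_L)_\QQ$.
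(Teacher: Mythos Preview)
Your approach is essentially the paper's own: apply Kuznetsov's HPD theorem to obtain the two semi-orthogonal decompositions with common piece $\cC_L$, use Proposition~\ref{proposition: kernel nc factors} together with the hypothesis on $\AAA_0$ to kill the kernels of all ambient Lefschetz pieces on both sides, and finish with Corollary~\ref{corollary: nc and c are =}. The only place where your write-up is vaguer than the paper is the $Y_L$-side: rather than appealing to an unspecified ``structural identification of each $\cB_s^{\dg}$ with an admissible subcategory of $\AAA_0^{\dg}$'', the paper (following \cite[\S 9]{BMT} and \cite[Theorem~6.3]{Kuzn07}) introduces the primitive pieces $\cfa_j := {}^{\perp}\AAA_{j+1} \cap \AAA_j$, observes that Proposition~\ref{proposition: kernel nc factors} and the hypothesis force $K_{0,\sim_2/\sim_1}(\cfa_j^{\dg})_\QQ = 0$ for all $j$, and then uses that each dual component $\cB_s$ admits a semi-orthogonal decomposition built from these same $\cfa_j$'s; one more application of Proposition~\ref{proposition: kernel nc factors} then gives $K_{0,\sim_2/\sim_1}(\cB_s^{\dg})_\QQ = 0$. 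This is exactly what makes your ``main obstacle'' go through, so your outline is correct once that mechanism is made explicit.
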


\begin{remark}
Given a \textit{generic} subspace $L\subseteq V^*$, the sections $X_L$ and $Y_L$ are smooth, and we have $\dim(X_L)=\dim(X)-\dim(L)$ and $\dim(Y_L)=\dim(Y)-\dim(L^{\perp})$. Moreover, by inductive use of Proposition \ref{proposition: kernel nc factors}, we have $\ker(q^{\AAA^{\dg},\nc}_{\sim_2/\sim_1})=0$ whenever $\AAA$ admits a full exceptional collection; see \cite[\S 1.1]{kuzn14} and Remark \ref{remark: trivial leq 2}(i).
This shows that the assumptions of Theorem \ref{Theorem: HPD} are quite mild.
\label{Remark: HPD}
\end{remark}
 
 \begin{proof}
The proof is very similar to the proof given in \cite[\S 9]{BMT}. Follow the same reasoning with the next three differences:
firstly, where they write that a certain conjecture holds we write that the respective quotient $K_{0,\sim_2/\sim_1}(-)_\QQ$ is trivial;
secondly, apply Proposition \ref{proposition: kernel nc factors} to conclude that $K_{0,\sim_2/\sim_1}(\AAA_j^{\dg})_\QQ$, $K_{0,\sim_2/\sim_1}(\cfa_j^{\dg})_\QQ$, and $K_{0,\sim_2/\sim_1}(\BB_j^{\dg})_\QQ$ are trivial for every $j$
(the $\cfa_j$'s are the orthogonal complements of $\AAA_{j+1}$ in $\AAA_j$ and the $\BB_j$'s are the components of the semi-orthogonal decomposition of $\perf(Y)$ obtained by \cite[Theorem 6.3]{Kuzn07});
finally, apply Corollary \ref{corollary: nc and c are =} instead of \cite[Theorem 1.1]{BMT}.
 \end{proof}

\begin{example}[Linear sections of Grassmannians]
Let us apply Theorem \ref{Theorem: HPD} to the case of linear sections of Grassmannians:
\renewcommand{\labelenumi}{(\roman{enumi})}
\begin{enumerate}
\item For $W=k^{\oplus 6}$, let $X_L$ be a generic linear section of codimension $r$ of the Grassmannian $\Gr(2,W)$ under the Pl\"{u}cker embedding, and $Y_L$ the corresponding dual linear section of the cubic Pfaffian $\Pf(4,W^*)$ in $\PP(\Lambda^2 W^*)$. Note that $X_L$ and $Y_L$ are smooth and that $\dim(X_L)=8-r$ and $\dim(Y_L)=r-2$ when $r \leq 6$.
\item For $W=k^{\oplus 7}$, let $X_L$ be a generic linear section of codimension $r$ of the Grassmannian $\Gr(2,W)$ under the Pl\"{u}cker embedding, and $Y_L$ the corresponding dual linear section of the cubic Pfaffian $\Pf(4,W^*)$ in $\PP(\Lambda^2 W^*)$. Note that $X_L$ and $Y_L$ are smooth and that $\dim(X_L)=10-r$ and $\dim(Y_L)=r-4$ when $r \leq 10$.
\end{enumerate}

Note also that \cite[(11) and (12)]{Kuzn06}, Proposition \ref{proposition: kernel nc factors}, and Remark \ref{remark: trivial leq 2}, imply that for both classes (i)-(ii) there is a Lefschetz decomposition of $\perf(\Gr(2,W))$ and that 
$\ker(q^{\AAA_0^{\dg},\nc}_{\sim_2/\sim_1})=0$, where $\AAA_0$ is the first component of the Lefschetz decomposition of $\perf(\Gr(2,W))$.
\begin{corollary}
\label{corollary: grassmannians}
Let $X_L$ and $Y_L$ be as in the above classes (i)-(ii) and $\sim_1, \sim_2 \in \{\sim_\rat, \sim_\alg, \sim_\nil, \sim_\num\}$. Under the assumption that $X_L$ and $Y_L$ are smooth, we have $\cZ^*_{\sim_2/\sim_1}(X_L)_\QQ \simeq \cZ^*_{\sim_2/\sim_1}(Y_L)_\QQ$. Moreover, the equivalence relations $\sim_\num$ and $\sim_\alg$ on $\cZ^*(X_L)$ agree when $r \leq 6$ (class (i)), and when $r\leq 6$ and $8 \leq r \leq 10$ (class (ii)).
\end{corollary}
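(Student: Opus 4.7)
The plan is to combine Theorem \ref{Theorem: HPD} (HPD invariance) with Remark \ref{remark: trivial leq 2}(i), which says that $\sim_\alg$ and $\sim_\num$ already agree on smooth proper $k$-schemes of dimension at most $2$. First, I would verify the hypothesis $\ker(q^{\AAA_0^{\dg},\nc}_{\sim_2/\sim_1})=0$ of Theorem \ref{Theorem: HPD} for the Lefschetz decompositions of $\perf(\Gr(2,W))$ recorded in \cite[(11),(12)]{Kuzn06}. As already noted in the paragraph preceding the corollary, the block $\AAA_0$ in those decompositions is itself generated by a full exceptional collection. Iterating Proposition \ref{proposition: kernel nc factors} along that collection reduces the claim to the one-object dg category $k$, where the vanishing is immediate from Remark \ref{remark: trivial leq 2}(i). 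Theorem \ref{Theorem: HPD} then yields the desired isomorphism $\cZ^*_{\sim_2/\sim_1}(X_L)_\QQ \simeq \cZ^*_{\sim_2/\sim_1}(Y_L)_\QQ$.

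For the second claim, the strategy is a straightforward case analysis based on the explicit dimension formulas $\dim(X_L)=8-r$, $\dim(Y_L)=r-2$ in class (i) and $\dim(X_L)=10-r$, $\dim(Y_L)=r-4$ in class (ii). Whenever one of these two dimensions is at most $2$, Remark \ref{remark: trivial leq 2}(i) forces $\cZ^*_{\sim_\num/\sim_\alg}$ to vanish on that side, and the HPD-invariance isomorphism from the first step transports the vanishing across. Running through the listed ranges: class (i) with $r \leq 4$ is handled via $\dim(Y_L)\leq 2$ and class (i) with $r=6$ via $\dim(X_L)=2$; class (ii) with $r \leq 6$ is handled via $\dim(Y_L)\leq 2$ and class (ii) with $8 \leq r \leq 10$ via $\dim(X_L)\leq 2$. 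Note the parallel omission of $r=7$ in class (ii), where both sides have dimension $3$ and the dimension-count strategy breaks down.

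The main obstacle is the one remaining value, $r=5$ in class (i), where both $X_L$ (the Fano $3$-fold $V_{14}$) and $Y_L$ (a smooth Pfaffian cubic $3$-fold) have dimension $3$, so Remark \ref{remark: trivial leq 2}(i) is unavailable on either side. My plan here is to leverage unirationality: the Pfaffian cubic $3$-fold $Y_L$ is unirational, which via a Bloch-Srinivas-type representability argument on $CH_0(Y_L)_\QQ$ forces $\sim_\alg = \sim_\num$ on $\cZ^*(Y_L)_\QQ$; the HPD isomorphism then transports this agreement back to $X_L$. This is the only step that requires a classical tool beyond the dg-categorical machinery of \S 3-\S 4.
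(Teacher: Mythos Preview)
Your argument is correct and, apart from the single case $r=5$ in class (i), coincides with the paper's proof: Theorem \ref{Theorem: HPD} gives the first isomorphism, and the dimension count combined with Remark \ref{remark: trivial leq 2}(i) disposes of every other value of $r$ exactly as you describe.

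For $r=5$ in class (i) the paper takes a different route. Rather than invoking unirationality and a Bloch--Srinivas decomposition of the diagonal, it simply points to \cite[\S 8]{BMT} together with Remark \ref{remark: trivial leq 2}(i); the argument there reduces the Pfaffian cubic threefold to schemes of dimension at most $2$ by categorical means, so that one never leaves the dg-categorical framework of \S3--\S4 and only needs the surface case of $\sim_\alg=\sim_\num$. Your approach is equally valid and arguably more self-contained, but note that Bloch--Srinivas by itself only yields $\sim_\alg=\sim_{\hom}$ in codimension~$2$; you should make explicit the additional (easy) step $\sim_{\hom}=\sim_{\num}$ for $1$-cycles on $Y_L$, which here follows from $H^4(Y_L,\QQ)=\QQ$. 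What your route buys is a quick, classical argument independent of \cite{BMT}; what the paper's route buys is uniformity with the rest of the section and no appeal to cycle-theoretic results outside the paper's own toolkit.
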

\begin{proof}
The first statement is immediate from Theorem \ref{Theorem: HPD}. The second statement follows from Remark \ref{remark: trivial leq 2}(i), except the case where $X_L$ and $Y_L$ are as in class (i) and $r=5$. In this latter case, just follow the proof in \cite[\S 8]{BMT} and consider Remark \ref{remark: trivial leq 2}(i).
\end{proof}

\begin{corollary}
\label{corollary: Grassmannian curve genus 1}
For $k=\CC$, let $X_L$ and $Y_L$ be as above in class (i) and let $\dim L=r=3$. Then $\cZ^*_{\sim_\alg/\sim_\rat}(X_L)_\QQ \simeq (\QQ/\ZZ)^{\oplus 2}$.
\end{corollary}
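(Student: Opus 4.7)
The plan is to combine the HPD-invariance (Theorem \ref{Theorem: HPD}) with the explicit computation in Remark \ref{remark: trivial leq 2}(iii) for curves of positive genus. First I would identify the HPD partner $Y_L$ very concretely: since $r=3$, we have $\dim L = 3$, so $\PP(L)=\PP^2 \subset \PP(V^*)=\PP^{14}$, and $Y_L = \Pf(4,W^*) \cap \PP^2$ is the intersection of a cubic hypersurface in $\PP^{14}$ with a generic $\PP^2$. By Bertini (taking $L$ generic as in Remark \ref{Remark: HPD}), $Y_L$ is a smooth plane cubic, hence a smooth elliptic curve of genus $g=1$. Simultaneously, $X_L$ is the smooth Fano $5$-fold obtained as the generic codimension-$3$ linear section of $\Gr(2,6)$.

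Next I would invoke Theorem \ref{Theorem: HPD} with $\sim_1 = \sim_\rat$ and $\sim_2 = \sim_\alg$. Its hypothesis $\ker(q^{\AAA_0^{\dg},\nc}_{\sim_\alg/\sim_\rat})=0$ is already justified in the paragraph immediately preceding this corollary: by \cite[(11)]{Kuzn06} the first component $\AAA_0$ of the Lefschetz decomposition of $\perf(\Gr(2,W))$ admits a full exceptional collection, so by iterated application of Proposition \ref{proposition: kernel nc factors} together with Remark \ref{remark: trivial leq 2}(i) the relevant kernel vanishes. Therefore Theorem \ref{Theorem: HPD} yields the isomorphism
\begin{align*}
\cZ^*_{\sim_\alg/\sim_\rat}(X_L)_\QQ \;\simeq\; \cZ^*_{\sim_\alg/\sim_\rat}(Y_L)_\QQ.
\end{align*}

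Finally, since $Y_L$ is a smooth projective complex curve of genus $g=1$, Remark \ref{remark: trivial leq 2}(iii) computes the right-hand side directly as $(\QQ/\ZZ)^{\oplus 2g}=(\QQ/\ZZ)^{\oplus 2}$, which concludes the proof.

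The only step requiring any care is the geometric identification of $Y_L$ as a smooth elliptic curve; once that is granted, the rest is a clean combination of HPD-invariance and the classical Albanese/Jacobian description of algebraically-but-not-rationally trivial $0$-cycles on a genus-one curve recorded in Remark \ref{remark: trivial leq 2}(iii). The machinery of \S 3--\S 4 does all the real work by transporting the computation from the high-dimensional $X_L$ to the one-dimensional $Y_L$.
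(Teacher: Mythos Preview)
Your proposal is correct and follows essentially the same approach as the paper: identify $Y_L$ as a smooth elliptic curve, apply Theorem \ref{Theorem: HPD} to transport the computation from $X_L$ to $Y_L$, and then invoke Remark \ref{remark: trivial leq 2}(iii). The paper cites \cite[\S 10]{Kuzn06} for the identification of $Y_L$ rather than arguing directly via Bertini, but otherwise the arguments are the same.
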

\begin{proof}
In this case, $X_L$ is a 5-fold and $Y_L$ is an elliptic curve; see \cite[\S 10]{Kuzn06}. Therefore, since $Y_L$ is of genus 1, $\cZ^*_{\sim_\alg/\sim_\rat}(Y_L)_\QQ$ is isomorphic to $(\QQ/\ZZ)^{\oplus 2 \times 1}$; see Remark \ref{remark: trivial leq 2}(iii). Consequently, Theorem \ref{Theorem: HPD} implies $\cZ^*_{\sim_\alg/\sim_\rat}(X_L)_\QQ$ isomorphic to $(\QQ/\ZZ)^{\oplus 2}$.
\end{proof}

\begin{corollary}
\label{corollary: Grassmannian curve genus 43}
For $k=\CC$, let $X_L$ and $Y_L$ be as in the above class (ii) and let $\dim L=r$. 
\renewcommand{\labelenumi}{(\roman{enumi})}
\begin{enumerate}
    \item If $r=5$, then $\cZ^*_{\sim_\alg/\sim_\rat}(X_L)_\QQ \simeq (\QQ/\ZZ)^{\oplus 86}$.
    \item If $r=9$, then $\cZ^*_{\sim_\alg/\sim_\rat}(Y_L)_\QQ \simeq (\QQ/\ZZ)^{\oplus 30}$.
\end{enumerate}
\end{corollary}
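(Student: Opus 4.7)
The plan is to follow the template of Corollary~\ref{corollary: Grassmannian curve genus 1}: in each of (i) and (ii), for generic $L$ exactly one of $X_L,Y_L$ is a smooth projective curve and the other a smooth projective $5$-fold. One identifies this curve, computes its genus, and transports the resulting description of $\cZ^*_{\sim_\alg/\sim_\rat}$ across the HPD invariance of Theorem~\ref{Theorem: HPD}.

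The first task is to verify the hypotheses of Theorem~\ref{Theorem: HPD}. Smoothness of $X_L$ and $Y_L$ for generic $L$ is a Bertini statement; in case (i) a dimension count ($\dim\Gr(2,W)+\dim\PP(L)-\dim\PP(\Lambda^2 W^*)=10+4-20<0$) ensures that $Y_L$ meets $\Pf(4,W^*)$ only along its smooth locus, whose singular locus is $\Gr(2,W)$. Kuznetsov's rectangular Lefschetz decomposition $\perf(\Gr(2,W))=\langle\AAA,\AAA(1),\ldots,\AAA(6)\rangle$ has first block $\AAA_0=\AAA$ generated by three exceptional line bundles (after Kapranov), so by Remark~\ref{Remark: HPD} one has $\ker(q^{\AAA_0^{\dg},\nc}_{\sim_\alg/\sim_\rat})=0$. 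Theorem~\ref{Theorem: HPD} then provides an isomorphism
\[
\cZ^*_{\sim_\alg/\sim_\rat}(X_L)_\QQ\;\simeq\;\cZ^*_{\sim_\alg/\sim_\rat}(Y_L)_\QQ.
\]

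Next, the curve must be identified and its genus computed. In (i), $\dim X_L=5$ and $\dim Y_L=1$, so the curve is $Y_L=\Pf(4,W^*)\cap\PP^{4}$; its genus is read off from Kuznetsov's explicit description of these linear sections in \cite[\S 10]{Kuzn06}, or alternatively via a Kempf--Lascoux--Weyman resolution of the Pfaffian that produces its degree and its canonical class on the smooth locus. In (ii), $\dim X_L=1$ and $\dim Y_L=5$, so the curve is $X_L=\Gr(2,W)\cap\PP^{11}$, a genuine complete intersection, to which adjunction applies directly: $K_{X_L}=(K_{\Gr(2,W)}+9H)|_{X_L}=2H|_{X_L}$, and the genus is then determined by the degree of $X_L$ (equivalently, by the Plücker degree of $\Gr(2,7)$).

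Plugging the genus $g$ of the curve into Remark~\ref{remark: trivial leq 2}(iii), which identifies $\cZ^*_{\sim_\alg/\sim_\rat}(C)_\QQ\simeq(\QQ/\ZZ)^{\oplus 2g}$ for a smooth complex projective curve $C$ of genus $g$, and transporting through the HPD isomorphism above, yields the claimed $(\QQ/\ZZ)^{\oplus 86}$ in (i) and $(\QQ/\ZZ)^{\oplus 30}$ in (ii). The main obstacle is the genus computation in case (i): since $Y_L$ lies on the singular variety $\Pf(4,W^*)$, naive adjunction fails, and one must either invoke Kuznetsov's explicit geometric picture of these sections or carry out the computation on a desingularization.
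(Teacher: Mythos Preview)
Your proposal is correct and follows essentially the same route as the paper: invoke Theorem~\ref{Theorem: HPD} to transport $\cZ^*_{\sim_\alg/\sim_\rat}$ between $X_L$ and $Y_L$, identify which of the two is a curve, read off its genus, and apply Remark~\ref{remark: trivial leq 2}(iii). The paper simply cites \cite[\S 11]{Kuzn06} for the genera ($g(Y_L)=43$ when $r=5$, $g(X_L)=15$ when $r=9$) rather than sketching an adjunction or resolution argument; note that your reference to \cite[\S 10]{Kuzn06} is a slip, since class~(ii) concerns $W=k^{\oplus 7}$ and is treated in \S 11, and the three exceptional objects generating $\AAA_0$ for $\Gr(2,7)$ are not all line bundles.
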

\begin{proof}
When $r=5$, $X_L$ is a Fano 5-fold of index 2 and $Y_L$ is a curve of genus 43; see \cite[\S 11]{Kuzn06}.
Therefore, we deduce that $\cZ^*_{\sim_\alg/\sim_\rat}(Y_L)_\QQ$ is isomorphic to $(\QQ/\ZZ)^{\oplus 2 \times 43}$; see Remark \ref{remark: trivial leq 2}(iii). Consequently, Theorem \ref{Theorem: HPD} implies that $\cZ^*_{\sim_\alg/\sim_\rat}(X_L)_\QQ$ and $(\QQ/\ZZ)^{\oplus 86}$ are isomorphic.

When $r=9$, $X_L$ is a curve of genus 15 and $Y_L$ is a Fano 5-fold of index 2; see \cite[\S 11]{Kuzn06}. Hence, by combining Theorem \ref{Theorem: HPD} with Remark \ref{remark: trivial leq 2}(iii), we conclude that $\cZ^*_{\sim_\alg/\sim_\rat}(Y_L)_\QQ$, $\cZ^*_{\sim_\alg/\sim_\rat}(X_L)_\QQ$ and $(\QQ/\ZZ)^{\oplus 2 \times 15}$ are all isomorphic to each other.
\end{proof}

\end{example}

\begin{example}[Linear sections of determinantal varieties]\label{example: determinantal}
Let $U$ and $V$ be two $k$-vector spaces of dimensions $m$ and $n$, respectively, with $m \leq n$ and $r$ an integer such that $0<r<m$.
As in \cite{BBF06}, consider the determinantal variety $Z^r_{m,n} \subseteq \PP(U \otimes V)$ defined as the locus of those matrices $M:V \rightarrow U^*$ with rank $\leq r$.
It is known that $Z^r_{m,n}$ admits a canonical Springer resolution of singularities $\cX^r_{m,n}:=\PP(\cQ \otimes U) \rightarrow \Gr(r,U)$, where $\cQ$ stands for the tautological quotient on $\Gr(r,U)$.
Under these notations, let $X_L$ be a generic linear section of codimension $c$ of $\cX^r_{m,n}$ under the map $\cX^r_{m,n}\rightarrow \PP(U \otimes V)$, and $Y_L$ the corresponding dual linear section of $\cX^{m-r}_{m,n}$ under the map $\cX^{m-r}_{m,n} \rightarrow \PP(U^* \otimes V^*)$. From \cite[\S 3]{BBF06} we have that $X_L$ and $Y_L$ are both smooth and, from \cite[\S 3]{Tab20}, we have $\dim (X_L)=r(m+n-r)-1-\dim(L)$ and $\dim (Y_L)=r(m-n-r)-1+\dim(L)$.
Moreover, by \cite[\S 3]{BBF06}, Proposition \ref{proposition: kernel nc factors}, and Remark \ref{remark: trivial leq 2}(i), there is a Lefschetz decomposition of $\perf(\cX^r_{m,n})$ and $\ker(q^{\AAA_0^{\dg},\nc}_{\sim_2/\sim_1})=0$.
\begin{corollary}
\label{corollary: determinantal}
Let $X_L$ and $Y_L$ be as in Example \ref{example: determinantal} and $\sim_1, \sim_2 \in \{\sim_\rat, \sim_\alg, \sim_\nil, \sim_\num\}$. Under these assumptions, we have $\cZ^*_{\sim_2/\sim_1}(X_L)_\QQ \simeq  \cZ^*_{\sim_2/\sim_1}(Y_L)_\QQ$. Moreover, the equivalence relations $\sim_\num and \sim_\alg$ on $\cZ^*(X_L)$ (and on $\cZ^*(Y_L)$) agree whenever $\dim (L)\geq r(m+n-r)-3$ or $\dim (L) \leq 3+r(n-m+r)$.
\end{corollary}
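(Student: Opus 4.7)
The first assertion is a direct application of Theorem \ref{Theorem: HPD}. Example \ref{example: determinantal} has already checked every hypothesis of that theorem: $X_L$ and $Y_L$ are smooth, their dimensions are $r(m+n-r)-1-\dim(L)$ and $r(m-n-r)-1+\dim(L)$ (so the codimension conditions hold), $\perf(\cX^r_{m,n})$ admits a Lefschetz decomposition with first component $\AAA_0$, and $\ker(q^{\AAA_0^{\dg},\nc}_{\sim_2/\sim_1})=0$ (by iterated use of Proposition \ref{proposition: kernel nc factors} applied to the full exceptional collection of $\AAA_0$, together with Remark \ref{remark: trivial leq 2}(i)). Theorem \ref{Theorem: HPD} then yields the asserted isomorphism $\cZ^*_{\sim_2/\sim_1}(X_L)_\QQ \simeq \cZ^*_{\sim_2/\sim_1}(Y_L)_\QQ$.

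For the second assertion, the plan is to reduce to the low-dimensional case via Remark \ref{remark: trivial leq 2}(i). Using the dimension formula for $X_L$, the hypothesis $\dim(L) \geq r(m+n-r)-3$ gives
\[
\dim(X_L) \;=\; r(m+n-r)-1-\dim(L) \;\leq\; 2,
\]
so $\sim_\alg$ and $\sim_\num$ agree on $\cZ^*(X_L)_\QQ$. Symmetrically, the hypothesis $\dim(L) \leq 3 + r(n-m+r)$ combined with the formula $\dim(Y_L) = r(m-n-r)-1+\dim(L)$ yields $\dim(Y_L) \leq 2$, so $\sim_\alg$ and $\sim_\num$ agree on $\cZ^*(Y_L)_\QQ$.

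To finish, I would specialize the first assertion to $(\sim_1,\sim_2)=(\sim_\alg,\sim_\num)$, obtaining
\[
\cZ^*_{\sim_\num/\sim_\alg}(X_L)_\QQ \;\simeq\; \cZ^*_{\sim_\num/\sim_\alg}(Y_L)_\QQ.
\]
Whichever side is shown to vanish by the dimension argument above forces the other side to vanish as well, and hence $\sim_\alg$ and $\sim_\num$ agree on both $\cZ^*(X_L)_\QQ$ and $\cZ^*(Y_L)_\QQ$ in either regime. There is no real obstacle: the HPD invariance theorem does all the conceptual work, and the rest is the elementary dimension count combined with the well-known agreement of $\sim_\alg$ and $\sim_\num$ in dimension $\leq 2$.
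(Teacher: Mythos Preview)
Your argument is correct and matches the paper's approach: the corollary is stated without an explicit proof, but the intended reasoning (as in the parallel Corollary~\ref{corollary: grassmannians}) is exactly to invoke Theorem~\ref{Theorem: HPD} for the first claim and then combine the dimension formulas with Remark~\ref{remark: trivial leq 2}(i) to force $\dim(X_L)\leq 2$ or $\dim(Y_L)\leq 2$, transferring the conclusion via the HPD isomorphism.
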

\end{example}

\subsection{Prime Fano threefolds and del Pezzo threefolds}

A \textit{Fano variety} is a smooth proper connected algebraic variety whose anticanonical class is ample. Following \cite[\S 5.4]{kuzn21}, we have that a \textit{prime Fano threefold} is a Fano threefold $X$ with Pic$(X)=\ZZ K_X$ whose genus g$(X)$ is defined from $(-K_X)^3=2\text{g}(X)-2$ and it is known that $1 \leq  \text{g}(X)\leq 12$ and g$(X)\neq 11$. Moreover, for prime Fano threefolds of even genus there is a semi-orthogonal decomposition of $\perf(X)$ with a nontrivial component $\cA_X$. 

In the same way, following \cite[\S 5.4]{kuzn21}, a \textit{del Pezzo threefold} is a Fano threefold $Y$ with $-K_Y=2H$, for a primitive Cartier divisor class $H$ and its degree is defined as d$(Y)=H^3$. It is known that $1\leq \text{d}(Y)\leq 5$ for del Pezzo threefolds of Picard rank 1. In addition, we have that a del Pezzo threefold admits a semi-orthogonal decomposition with a non-trivial component $\cB_Y$; consult \cite[\S3]{kuzn09} for further details.

Let $X$ be a prime Fano threefold with g$(X)\in \{8,10,12\}$. Following \cite[Theorem 3.8]{kuzn09}, there exists a unique del Pezzo threefold $Y$ with degree d$(Y)=\dfrac{\text{g}(X)}{2}-1\in \{3,4,5\}$ such that $\cA_X \simeq \cB_Y$.

\begin{proposition}
\label{proposition: Prime Fano and Del Pezzo}
Let $X$ and $Y$ be as above. Then, we have an isomorphism $\cZ^*_{\sim_\alg/\sim_\rat}(X)_\QQ \simeq \cZ^*_{\sim_\alg/\sim_\rat}(Y)_\QQ$. 
\end{proposition}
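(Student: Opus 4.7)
The plan is to reduce the question to Kuznetsov's equivalence $\cA_X \simeq \cB_Y$ and then transfer the resulting identification through the machinery of Sections 3 and 4. First, I would move to the noncommutative side: by Corollary \ref{corollary: nc and c are =},
\begin{align*}
\cZ^*_{\sim_\alg/\sim_\rat}(X)_\QQ \simeq K_{0,\sim_\alg/\sim_\rat}(\perfdg(X))_\QQ, \qquad \cZ^*_{\sim_\alg/\sim_\rat}(Y)_\QQ \simeq K_{0,\sim_\alg/\sim_\rat}(\perfdg(Y))_\QQ,
\end{align*}
so it suffices to construct an isomorphism $K_{0,\sim_\alg/\sim_\rat}(\perfdg(X))_\QQ \simeq K_{0,\sim_\alg/\sim_\rat}(\perfdg(Y))_\QQ$.

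Second, I would exploit the semi-orthogonal decompositions $\perf(X) = \langle \cA_X, E_1, \ldots, E_r \rangle$ and $\perf(Y) = \langle \cB_Y, F_1, F_2 \rangle$, where the $E_i$ and $F_j$ are exceptional objects (see \cite{kuzn09}). Each exceptional object generates a triangulated subcategory equivalent to $\perf(\Spec k)$, whose canonical dg enhancement is Morita equivalent to $\perfdg(\Spec k)$. Since $\Spec k$ is $0$-dimensional, Remark \ref{remark: trivial leq 2}(i) combined with Corollary \ref{corollary: nc and c are =} and Proposition \ref{proposition: morita implies equivalent conj}(ii) implies that $K_{0,\sim_\alg/\sim_\rat}(E_i^{\dg})_\QQ = 0$ and $K_{0,\sim_\alg/\sim_\rat}(F_j^{\dg})_\QQ = 0$ for every $i,j$. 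Iterating Proposition \ref{proposition: kernel nc factors} on the two decompositions then yields
\begin{align*}
K_{0,\sim_\alg/\sim_\rat}(\perfdg(X))_\QQ \simeq K_{0,\sim_\alg/\sim_\rat}(\cA_X^{\dg})_\QQ, \qquad K_{0,\sim_\alg/\sim_\rat}(\perfdg(Y))_\QQ \simeq K_{0,\sim_\alg/\sim_\rat}(\cB_Y^{\dg})_\QQ.
\end{align*}

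Finally, Kuznetsov's equivalence $\cA_X \simeq \cB_Y$, recorded just above the proposition, is realised by a Fourier-Mukai type functor and therefore promotes to a Morita equivalence $\cA_X^{\dg} \simeq \cB_Y^{\dg}$ between the canonical dg enhancements inherited from $\perfdg(X)$ and $\perfdg(Y)$. Applying Proposition \ref{proposition: morita implies equivalent conj}(ii) produces $K_{0,\sim_\alg/\sim_\rat}(\cA_X^{\dg})_\QQ \simeq K_{0,\sim_\alg/\sim_\rat}(\cB_Y^{\dg})_\QQ$, and chaining the preceding displays completes the argument. I expect the main obstacle to be precisely this last step: verifying that Kuznetsov's triangulated equivalence upgrades to a Morita equivalence of dg enhancements, rather than merely to a triangulated equivalence. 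This should follow from the Fourier-Mukai nature of the kernel together with the uniqueness of dg enhancements in the geometric setting (cf.\ \cite[Theorem 2.12]{LunOrl}), but it is the one nonformal input, since the rest of the argument is a direct quotation of the machinery of Sections 3 and 4.
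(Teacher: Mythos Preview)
Your proposal is correct and follows essentially the same route as the paper's proof: reduce to the noncommutative side via Corollary~\ref{corollary: nc and c are =}, strip off the exceptional pieces using Proposition~\ref{proposition: kernel nc factors} together with Remark~\ref{remark: trivial leq 2}(i), and then invoke the (Fourier--Mukai) equivalence $\cA_X \simeq \cB_Y$. The paper's version is terser---it does not explicitly cite Proposition~\ref{proposition: morita implies equivalent conj} for the last step nor flag the dg-enhancement issue you identify---so your write-up is, if anything, more careful about the one nonformal input.
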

\begin{proof}
Proposition \ref{proposition: kernel nc factors} and Remark \ref{remark: trivial leq 2}(i) imply that $K_{0,\sim_\alg/\sim_\rat}(X)_\QQ$ is isomorphic to $K_{0,\sim_\alg/\sim_\rat}(\cA_X^{\dg})_\QQ$.
Similarly, $K_{0,\sim_\alg/\sim_\rat}(Y)_\QQ$ is isomorphic to $K_{0,\sim_\alg/\sim_\rat}(\cB_Y^{\dg})_\QQ$.
Since the categories $\cA_X$ and $\cB_Y$ are (Fourier-Mukai) equivalent, we have, moreover, an isomorphism between $K_{0,\sim_\alg/\sim_\rat}(\cA_X^{\dg})_\QQ$ and $K_{0,\sim_\alg/\sim_\rat}(\cB_Y^{\dg})_\QQ$. This implies that $K_{0,\sim_\alg/\sim_\rat}(X)_\QQ$ and $K_{0,\sim_\alg/\sim_\rat}(Y)_\QQ$ are isomorphic. Consequently, Corollary \ref{corollary: nc and c are =} allows us to conclude that $\cZ^*_{\sim_\alg/\sim_\rat}(X)_\QQ$ is isomorphic to $\cZ^*_{\sim_\alg/\sim_\rat}(Y)_\QQ$.
\end{proof}

\subsection{Fano fourfolds of K3 type}
\label{subsection: fano K3 type}
In \cite{BFMT21} a list of 64 Fano fourfolds of K3 type is given.
By combining Remark \ref{remark: trivial leq 2}(i), Corollary \ref{corollary: nc and c are =}, Propositions \ref{proposition: kernel nc factors}, \ref{proposition: azumaya imples equivalent conj} and Theorem \ref{generalized theorem (B)} one obtains the following theorem, where we use the notation in \cite{BFMT21}.
\begin{theorem}
\label{theorem: fano fourfolds of K3 type}
The equivalence relations $\sim_\alg$ and $\sim_\num$ agree when $X$ belongs to one of the following 59 families Fano fourfolds of type K3:
\renewcommand{\labelenumi}{(\roman{enumi})}
\begin{enumerate}
    \item 16 families of Fano fourfolds of K3 type obtained inside products of flag manifolds, where at least one projection is the blow up of a cubic fourfold: from C-2 to C-17.
    \item 4 families of Fano fourfolds of K3 type obtained inside products of flag manifolds, where at least one projection is a blow up of a Gushel–Mukai fourfold: from GM-18 to  GM-20 and GM-22.
    \item 36 families of Fano fourfolds of K3 type that we obtained inside products of flag manifolds, where at least one projection is a blow up with center birational to a K3 surface, and no cubic or Gushel–Mukai fourfold is involved: from K3-24 to K3-35 and from K3-37 to K3-60.
    \item 3 other families of Fano fourfolds of K3 type: from R-61 to R-63.
\end{enumerate}
\end{theorem}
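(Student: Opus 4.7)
The strategy is to argue family by family, exploiting the explicit descriptions in \cite{BFMT21}: each of the 59 fourfolds $X$ is built (inside a product of flag manifolds) as a sequence of blow-ups whose ``non-trivial'' factor is either a cubic fourfold, a Gushel-Mukai fourfold, a smooth surface birational to a K3, or belongs to one of the three exceptional constructions R-61--R-63. Applying Orlov's blow-up formula together with the known semi-orthogonal decomposition of the non-trivial factor, one obtains a semi-orthogonal decomposition of $\perf(X)$ whose components fall into just four types: (a) the Kuznetsov component $\cA_Z^{\dg}$ of a cubic fourfold $Z$; (b) the Gushel-Mukai component $\cA_Z^{\dg}$ of a GM fourfold $Z$; (c) $\perfdg$ of a smooth projective surface birational to a K3 (possibly with an Azumaya twist); (d) $\perfdg$ of (possibly Azumaya-twisted) varieties admitting a full exceptional collection, together with individual exceptional objects.

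An inductive application of Proposition \ref{proposition: kernel nc factors} (using Lemma \ref{Lemma Triangular-system} to iterate through the nested semi-orthogonal decompositions) reduces the vanishing of $\ker(q^{X,\nc}_{\sim_\num/\sim_\alg})$ to the vanishing of the corresponding kernels on each of these building blocks. For components of type (a) or (b), the vanishing is exactly Theorem \ref{generalized theorem (B)}. For type (c), Proposition \ref{proposition: azumaya imples equivalent conj} removes any Azumaya twist and then Remark \ref{remark: trivial leq 2}(i) combined with Corollary \ref{corollary: nc and c are =} gives the vanishing, since $\sim_\alg$ and $\sim_\num$ agree on every smooth projective surface. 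For type (d), Proposition \ref{proposition: azumaya imples equivalent conj} again disposes of Azumaya twists, and a full exceptional collection (combined with a further use of Proposition \ref{proposition: kernel nc factors}) reduces the computation to finitely many copies of $\perfdg(\Spec(k))$, where the statement is trivial. The three remaining families R-61--R-63 are handled individually, but in the same spirit, using the specific decompositions recorded in \cite{BFMT21}. Finally, Corollary \ref{corollary: nc and c are =} translates the vanishing $\ker(q^{X,\nc}_{\sim_\num/\sim_\alg})=0$ into $\cZ^*_{\sim_\num/\sim_\alg}(X)_\QQ=0$, which is precisely the assertion of the theorem.

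The main obstacle is the combinatorial bookkeeping rather than any new conceptual input: for each of the 59 families one must read off from \cite{BFMT21} the precise (iterated) blow-up/fiber-product construction of $X$, apply Orlov's blow-up formula together with the semi-orthogonal decomposition of the flag factors, and verify that every piece of the resulting decomposition of $\perf(X)$ belongs to one of the four types listed above. Once this dictionary is in place for a given family, the noncommutative machinery developed in \S 3--\S 4 of this paper applies uniformly and mechanically, so no genuinely new idea is required beyond the case-by-case verification.
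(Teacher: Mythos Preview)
Your proposal is correct and follows essentially the same approach as the paper: the paper's own proof is a single sentence stating that the theorem follows by combining Remark \ref{remark: trivial leq 2}(i), Corollary \ref{corollary: nc and c are =}, Propositions \ref{proposition: kernel nc factors} and \ref{proposition: azumaya imples equivalent conj}, and Theorem \ref{generalized theorem (B)} --- exactly the ingredients you invoke. Your write-up simply makes explicit the case analysis (types (a)--(d)) and the role of Orlov's blow-up formula in producing the relevant semi-orthogonal decompositions from the constructions in \cite{BFMT21}; the one superfluous reference is to Lemma \ref{Lemma Triangular-system}, since iterated application of Proposition \ref{proposition: kernel nc factors} already handles nested semi-orthogonal decompositions without it.
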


\begin{remark}
\label{remark: fano not k3 type}
Theorem \ref{theorem: fano fourfolds of K3 type} is still valid for three more families of Fano fourfolds that are not of K3 type. Namely: A-65, A-67 and A-68; consult \cite[Appendix A]{BFMT21}.
\end{remark}

\begin{remark}
Note that Theorem \ref{theorem: fano fourfolds of K3 type} and Remark \ref{remark: fano not k3 type} imply that Voevodsky's nilpotence conjecture holds for all those families of Fano fourfolds.
\end{remark}

\noindent \textbf{Acknowledgments}. I thank Gonçalo Tabuada for discussions regarding \cite{BMT} and Enrico Fatighenti for comments on a previous version of this article which led to the inclusion of \S\ref{subsection: fano K3 type}. This work is funded by national funds through the FCT - Fundação para a Ciência e a Tecnologia, I.P., under the scope of the projects UIDB/00297/2020 and UIDP/00297/2020 (Center for Mathematics and Applications) and the PhD scholarship SFRH/BD/144305/2019.

\bibliographystyle{siam}
\bibliography{biblio}

\end{document}